\newcommand{\bc}{\begin{center}}
\newcommand{\ec}{\end{center}}
\newcommand{\bq}{\begin{quote}}
\newcommand{\eq}{\end{quote}}
\newcommand{\bqtn}{\begin{quotation}}
\newcommand{\eqtn}{\end{quotation}}
\newcommand{\beq}{\begin{equation}}
\newcommand{\eeq}{\end{equation}}
\newcommand{\bearr}{\begin{eqnarray}}
\newcommand{\eearr}{\end{eqnarray}}
\newcommand{\bearrn}{\begin{eqnarray*}}
\newcommand{\eearrn}{\end{eqnarray*}}
\newcommand{\bi}{\begin{itemize}}
\newcommand{\ei}{\end{itemize}}
\newcommand{\be}{\begin{enumerate}}
\newcommand{\ee}{\end{enumerate}}
\newcommand{\bthe}{\begin{theorem}}
\newcommand{\ethe}{\end{theorem}}
\newcommand{\blem}{\begin{lemme}}
\newcommand{\elem}{\end{lemme}}
\newcommand{\bsolu}{\begin{solution}}
\newcommand{\esolu}{\end{solution}}
\newcommand{\bexer}{\begin{exercise}}
\newcommand{\eexer}{\end{exercise}}
\newcommand{\Aut}{{\rm Aut}}
\newcommand{\Tr}{{\rm Tr}}
\newcommand{\RR}{\mathbb{R}}
\newcommand{\QQ}{\mathbb{Q}}
\newcommand{\ZZ}{\mathbb{Z}}
\newcommand{\FF}{\mathbb{F}}
\newcommand{\AAA}{\mathbb{A}} % \AA already assigned (Angstrom symbol)
\newcommand{\ba}{\begin{array}}
\newcommand{\ea}{\end{array}}
\newcommand{\mfp}{\mathfrak{p}}
\newcommand{\mfP}{\mathfrak{P}}
\newcommand{\mfq}{\mathfrak{q}}
\newcommand{\mfQ}{\mathfrak{Q}}
\newcommand{\mfM}{\mathfrak{M}}
\newcommand{\Char}{\operatorname{char}}
\newcommand{\im}{\operatorname{im}}
\newtheorem{theoreme}{Theorem}[section]
\newtheorem{theorem}[theoreme]{Theorem}
\newtheorem{lemme}[theoreme]{Lemma}
\newtheorem{lemma}[theoreme]{Lemma}
\newtheorem{proposition}[theoreme]{Proposition}
\newtheorem{definition}[theoreme]{Definition}
\newtheorem{corollaire}[theoreme]{Corollary}
\newtheorem{solution}[theoreme]{Solution}
\newtheorem{exercise}[theoreme]{Exercise}
\newtheorem{question}[theoreme]{Question}
\newenvironment{customthm}[1]
  {\innercustomthm}
  {\endinnercustomthm}
\newcommand{\bdefi}{\begin{definition}}
\newcommand{\edefi}{\end{definition}}
\newcommand{\brk}{\begin{remarque}}
\newcommand{\erk}{\end{remarque}}
\newcommand{\bpp}{\begin{proposition}}
\newcommand{\epp}{\end{proposition}}
\newcommand{\bpf}{\begin{proof}}
\newcommand{\epf}{\end{proof}}
\newcommand{\bcor}{\begin{corollaire}}
\newcommand{\ecor}{\end{corollaire}}
\newcommand{\bsol}{\begin{solution}}
\newcommand{\esol}{\end{solution}}
\theoremstyle{definition}
\newtheorem{remarque}[theoreme]{Remark}
\newtheorem{remark}[theoreme]{Remark}
\title{When are permutation invariants Cohen-Macaulay over all fields?}
\author{Ben Blum-Smith and Sophie Marques}
\email{ben@cims.nyu.edu}
\email{sophie.marques@uct.ac.za}
\begin{document}
\large
\selectlanguage{english}
%\bc
\maketitle

\begin{abstract}
We prove that the polynomial invariants of a permutation group are Cohen-Macaulay for any choice of coefficient field if and only if the group is generated by transpositions, double transpositions, and 3-cycles. This unites and generalizes several previously known results. The ``if" direction of the argument uses Stanley-Reisner theory and a recent result of Christian Lange in orbifold theory. The ``only-if" direction uses a local-global result based on a theorem of Raynaud to reduce the problem to an analysis of inertia groups, and a combinatorial argument to identify inertia groups that obstruct Cohen-Macaulayness. %(theorem \ref{m})(lemmas \ref{lem:inertiaupstairsis}, \ref{lem:inertiais}, \ref{lem:primestabilizer})
\end{abstract}

\section{Introduction}

The invariant ring of a graded action by a finite group $G$ on a polynomial ring 
\[
k[\mathbf{x}]=k[x_1,\dots,x_n]
\]
over a field $k$ is well-behaved when the field characteristic is prime to the group order. For example, it is generated in degree $\leq |G|$ (Noether's bound), and it is a Cohen-Macaulay ring (the Hochster-Eagon theorem).

When the characteristic divides the group order (the {\em modular case}), the situation is much more mysterious. Both of these statements (and many others) can, but do not always, fail. The question of when such pathologies arise has attracted research attention over the last few decades.

In this article we focus on Cohen-Macaulayness. Let $k[\mathbf{x}]^G$ be the invariant ring and let 
\[
p = \operatorname{char}k
\]
 be the field characteristic. We interpret $k[\mathbf{x}]$ as the coordinate ring of $\AAA_k^n$, so that the action of $G$ on $k[\mathbf{x}]$ is induced from an action on $\AAA_k^n$ by automorphisms. Because the action on $k[\mathbf{x}]$ is graded, the corresponding action on $\AAA_k^n$ is linear, i.e. it arises from a linear representation of $G$ on a $k$-vector space. Here is a sampling of known results:

\begin{itemize}
\item In 1980, Ellingsrud and Skjelbred (\cite{ellingsrud}) showed that if $G$ is cyclic of order $p^m$, then $k[\mathbf{x}]^G$ is not Cohen-Macaulay unless $G$ fixes a subspace of $\AAA_k^n$ of codimension $\leq 2$. 
%{\color{red} I am not sure to understand, what is a subspace of $V$ in term of $k[\mathbf{x}]$ a quotient? also is $dim_k(V)=n$? } {\color{blue} Is this better?}
\item In 1996, Larry Smith (\cite{smith96}) showed that if $n = 3$, then $k[\mathbf{x}]^G$ is Cohen-Macaulay. (This was priorly known to hold for $n\leq 2$.)
\item In 1999, Campbell et al (\cite{campbelletal}) showed that if $G$ is a $p$-group, and if the action of $G$ on $\AAA_k^n$ is the sum of three copies of the same linear representation, then $k[\mathbf{x}]^G$ is not Cohen-Macaulay. 
\item Also in 1999, Gregor Kemper (\cite{kemper99}) showed that if $G$ is a $p$-group and $k[\mathbf{x}]^G$ is Cohen-Macaulay, then $G$ is necessarily generated by elements $g$ whose fixed-point sets in $\AAA_k^n$ have codimension $\leq 2$, generalizing \cite{ellingsrud} beyond cyclic groups and \cite{campbelletal} beyond three-copies representations.
\end{itemize}

See \cite{kemper12} for a more detailed overview. 

A theme uniting these results is that generation of $G$ by elements fixing codimension $\leq 2$ subspaces is related to good behavior of $k[\mathbf{x}]^G$. Further variations on this theme are found in \cite{dufresne}, \cite{gordeevkemper}, \cite{kacwatanabe}, and \cite{lorenzpathak}. The main goal of this paper is a result of this kind for permutation groups $G\subset S_n$, acting on $k[\mathbf{x}]$ by permuting the $x_i$'s. The result characterizes permutation groups generated in this way, and is not restricted to $p$-groups. 

Permutation groups have the feature that the definition of the action is insensitive to the choice of a ground field $k$. Thus it is natural to ask: 

\begin{question}\label{q:CMpermutationgroups}
For which $G\subset S_n$ is $k[\mathbf{x}]^G$ Cohen-Macaulay regardless of $k$?
\end{question}

An additional motivation for this question is that $k[\mathbf{x}]^G$ is Cohen-Macaulay for every choice of $k$ if and only if $\ZZ[\mathbf{x}]^G$ is free as a module over the subring $\ZZ[\mathbf{x}]^{S_n}$ of symmetric polynomials, and also if and only if $A[\mathbf{x}]^G$ is Cohen-Macaulay for every Cohen-Macaulay ring $A$. (We will not develop these equivalences here, but see \cite[\S 2.4.1]{blumsmith} where the first is worked out in detail, and \cite[Exercise 5.1.25]{brunsherzog} for a sketch of the second in a slightly different setting.) %Thus question \ref{q:CMpermutationgroups} asks for the permutation groups whose {\em integer} invariants are module-free over a natural subring.

In \cite{kemper01}, Kemper gave an if-and-only-if criterion that determines Cohen-Macaulayness of a permutation invariant ring when $p$ divides $|G|$ exactly once. This criterion allows to determine Cohen-Macaulayness for many specific groups and primes, but does not in general answer question \ref{q:CMpermutationgroups} because few permutation groups have squarefree order. Some special cases of question \ref{q:CMpermutationgroups} are known:

\begin{itemize}
\item If $G$ is a Young subgroup (i.e. a product of symmetric groups acting on disjoint sets), then $k[\mathbf{x}]^G$ is a polynomial algebra over $k$, so it is Cohen-Macaulay regardless of $k$.

\item It follows from the result of Kemper (\cite{kemper99}) quoted above that if $G$ is a $p$-group, then $k[\mathbf{x}]^G$ cannot be Cohen-Macaulay over all fields unless $G$ is generated by transpositions and double transpositions, or 3-cycles (and $p=2$ or $3$).

\item Kemper also showed in (\cite{kemper99}) that if $G\subset S_n$ is regular (i.e. its action on 
\[
[n] = \{1,\dots,n\}
\]
is free and transitive), then $k[\mathbf{x}]^G$ is Cohen-Macaulay over every $k$ if it is isomorphic to $C_2$, $C_3$, or $C_2\times C_2$, but not otherwise. (In fact, in other cases, it is not Cohen-Macaulay for any $k$ with $\operatorname{char}k$ dividing $|G|$.)

\item Victor Reiner (\cite{reiner92}, \cite{hersh2}) has shown that $A_n$, and the diagonally embedded $S_n\hookrightarrow S_n\times S_n\subset S_{2n}$, have invariant rings that are Cohen-Macaulay regardless of the field. (These are the $S_n$-cases of results he found for all finite Coxeter groups.) Patricia Hersh (\cite{hersh}, \cite{hersh2}) has shown the same for the wreath product $S_2\wr S_n\subset S_{2n}$.
\end{itemize} 

Our main objective in this article is to answer question \ref{q:CMpermutationgroups} completely. We will prove the following theorem, which unites all of these cases and ties them into the theme mentioned above.

\begin{theorem}\label{mainresult}
Let $G\subset S_n$. The ring $k[\mathbf{x}]^G$ is Cohen-Macaulay for all choices of $k$ if and only if $G$ is generated by transpositions, double transpositions, and 3-cycles.
\end{theorem}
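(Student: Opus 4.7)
My plan is to prove the two directions by essentially independent arguments, as signaled by the abstract.

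For the "if" direction, suppose $G$ is generated by transpositions, double transpositions, and 3-cycles. A direct cycle-type computation shows that these are exactly the elements of $S_n$ whose fixed-point set in $\AAA_k^n$ has codimension at most 2 (equivalently, elements $\sigma$ with $\sum_c (|c|-1) \leq 2$ over the nontrivial cycles of $\sigma$). I would realize $k[\mathbf{x}]^G$ in terms of a simplicial complex built from the stratification of $\AAA_k^n$ by coordinate subspaces (the intersection lattice of the braid arrangement, cut down by $G$-orbits), and use Stanley-Reisner machinery to read off Cohen-Macaulayness from shellability or an equivalent combinatorial condition. Lange's orbifold result, which says the topological quotient of Euclidean space by a group generated by codimension-$\leq 2$-fixing elements has a PL-manifold-like structure, provides the key geometric input: it should translate to the relevant Stanley-Reisner complex being Cohen-Macaulay in a characteristic-free way, via an explicit shelling valid over every field.

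For the "only if" direction, suppose $G$ is not generated by transpositions, double transpositions, and 3-cycles. The approach has three steps. First, via a local-global reduction from Raynaud's purity theorem, if $k[\mathbf{x}]^G$ is Cohen-Macaulay for every field $k$, then for every prime ideal $\mathfrak{p}$ of $\ZZ[\mathbf{x}]^G$ with residue characteristic $p$, the inertia subgroup $I \subset G$ at $\mathfrak{p}$ must yield a Cohen-Macaulay invariant ring $\FF_p[\mathbf{x}]^I$. Second, such inertia groups are $p$-groups, so Kemper's theorem (\cite{kemper99}) forces $I$ to be generated by codim-$\leq 2$-fixing elements, which by the cycle-type calculation above means $I$ is generated by transpositions, double transpositions, and 3-cycles. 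Third, a combinatorial argument must produce, from the hypothesis that $G$ itself is not so generated, a specific prime $\mathfrak{p}$ whose inertia group witnesses the failure.

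The main obstacle I expect is this last combinatorial step. Given that the subgroup of $G$ generated by its transpositions, double transpositions, and 3-cycles is proper in $G$, one needs to locate a prime $p$ and a closed point of $\AAA_{\FF_p}^n$ whose stabilizer in $G$ is not generated by its own small elements. The natural strategy is to take a "bad" element $g \in G \setminus H$ (where $H$ is the subgroup generated by small elements), choose $p$ dividing the order of $g$, select a point whose coordinate collisions modulo $p$ are engineered so that its stabilizer contains a $p$-power of $g$, and then verify that this stabilizer lacks enough small elements to satisfy Kemper's generation condition. Controlling the generators of the inertia group, rather than merely its elements, is the delicate heart of the argument; in parallel, transferring Lange's orbifold input into a shelling that survives arbitrary characteristic is the main subtlety on the "if" side.
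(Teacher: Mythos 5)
Your proposal follows the paper's high-level strategy — Stanley–Reisner theory plus Lange for the ``if'' direction, a Raynaud-style local-global reduction plus $p$-group obstruction for the ``only-if'' direction — but both halves contain execution gaps that the paper works around in nontrivial ways.

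On the ``if'' side, you propose to translate Lange's PL-manifold result into ``an explicit shelling valid over every field.'' This is precisely the route the paper avoids: being a PL ball does not imply shellability (there exist non-shellable triangulations of balls, e.g.\ Rudin's and Ziegler's), so there is no straightforward way to extract a shelling of $\Delta/G$ from Lange's theorem. The paper instead invokes the Reisner--Munkres--Stanley characterization, which says Cohen-Macaulayness of the Stanley--Reisner ring of a boolean complex is equivalent to a homological vanishing condition on the underlying space that depends only on its homeomorphism type. Since Lange's theorem identifies $|\Delta/G|$ as a polyhedral star in a PL manifold, hence a ball, the vanishing condition holds automatically over every field, with no need to construct a shelling. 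Replacing the shelling step with the topological criterion is not an optional simplification — it is what makes the argument go through.

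On the ``only-if'' side, the proposal has several interlocking gaps. First, the claim that ``such inertia groups are $p$-groups'' is false: the inertia group of a prime $\mathfrak{P}_\pi^\star$ of $k[\mathbf{x}]$ (with $\Char k = p$) is the blockwise stabilizer $G_\pi^B$, which can contain tame ramification; for instance if $G \supset S_3$ and $p \geq 5$, the prime $(x_1-x_2, x_1-x_3)$ has inertia $S_3$, not a $p$-group. Thus Kemper's theorem from \cite{kemper99}, whose hypothesis is that the acting group is a $p$-group, does not apply directly. Second, the local-global reduction via Raynaud yields Cohen-Macaulayness of the invariants of the inertia group acting on a strictly henselized \emph{local} ring $(A_\mathfrak{p}^{hs})_\mathfrak{Q}$, not of the polynomial-ring invariant $\FF_p[\mathbf{x}]^I$. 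These are genuinely different rings, and the paper is careful to state and prove the local-global equivalence (theorem \ref{m}) at that generality, then supply a substitute for Kemper's obstruction (lemma \ref{lem:GKLP}, via Lorenz--Pathak's relative trace criterion) that works for actions on arbitrary noetherian rings containing $\FF_p$ rather than only graded polynomial rings. Third, and most importantly, the argument does not close unless one first passes to the quotient: letting $N \trianglelefteq G$ be the subgroup generated by the $2$-reflections, the paper studies the action of $G/N$ on $k[\mathbf{x}]^N$ (which is Cohen-Macaulay by the ``if'' direction). It is in that quotient action that a minimal choice of orbit-partition $\pi$ associated to some $g\in G\setminus N$ produces an inertia group $G_\pi^BN/N$ that is cyclic of prime order $p$ (lemma \ref{lem:primestabilizer}); this is where the $p$-group structure actually comes from, by a purely combinatorial argument rather than by residue-characteristic considerations. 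Without the pass to $G/N$ and the minimality argument, one cannot guarantee that the inertia group one constructs is a $p$-group, nor that it fails to be generated by $2$-reflections on the relevant ring, and the obstruction does not materialize.
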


Let $N$ be the subgroup of $G$ generated by transpositions, double transpositions, and 3-cycles. The ``if" direction of theorem \ref{mainresult}, together with the Hochster-Eagon theorem (\cite[Proposition 13]{hochstereagon}), imply that the characteristics $p$ in which $k[\mathbf{x}]^G$ fails to be Cohen-Macaulay must be among those that divide $[G:N]$. This implication will be discussed in more detail in the conclusion (\S \ref{sec:concl}). The ``only-if" direction implies that if $[G:N]>1$, then there is at least one such characteristic $p$. This $p$ is explicitly constructed in the course of the proof.

The proof of this theorem is methodologically eclectic. The ``if" direction uses Stanley-Reisner theory, which relates Cohen-Macaulayness of $k[\mathbf{x}]^G$ to the topology of the quotient of a ball by $G$, and a recent result in orbifold theory by Christian Lange (\cite{lange2}) that characterizes the groups $G$ such that this quotient is a piecewise-linear ball. The ``only-if" direction is much more algebraic. It is based on a local-global result (theorem \ref{m}) reducing the Cohen-Macaulayness of a noetherian invariant ring to that of the invariant rings of its inertia groups acting on strict localizations.

Though theorem \ref{mainresult} is specific to the situation of a polynomial ring $k[\mathbf{x}]$ and a permutation group $G$, a substantial portion of our method for the ``only-if" direction applies in considerably more generality. Section \ref{sec:groupactions} concerns arbitrary commutative, unital rings, and the local-global result just mentioned only assumes that the invariant ring is noetherian. (Other work on Cohen-Macaulayness of invariants at the generality of noetherian rings includes \cite{gordeevkemper} and \cite{lorenzpathak}.) A secondary goal of this paper is to develop these general tools, which we expect have broader applicability. The fact that Cohen-Macaulayness depends fully on the local action of the inertia groups yields information about Cohen-Macaulayness whenever inertia groups can be accessed directly and are simpler than the whole group, as in the present case.

The method of the ``if" direction is similar to the methods used by Reiner and Hersh (\cite{reiner92}, \cite{hersh}, \cite{hersh2}) to prove the results mentioned above. The novelty is the application of Lange's orbifold result (\cite{lange2}) in place of an explicit shelling of a cell complex. The main novelties in the ``only-if" direction are the local-global theorem \ref{m}; its application to show that certain kinds of inertia $p$-groups obstruct Cohen-Macaulayness (proposition \ref{prop:pobstruction}); and a combinatorial argument that exhibits such an inertia $p$-group explicitly in the case at hand (lemma \ref{lem:primestabilizer}).%\ref{lem:inertiaupstairsis}, \ref{lem:inertiais},

The organization of the paper is as follows. Section \ref{sec:background} collects together the needed background from commutative algebra, Stanley-Reisner theory, and piecewise-linear topology, and introduces notation that is used throughout the article. Section \ref{sec:inertiaCM} contains the general results on Cohen-Macaulayness and inertia groups that are needed for the ``only-if" direction of theorem \ref{mainresult}, including the local-global theorem \ref{m} and the $p$-group obstruction proposition \ref{prop:pobstruction}. Section \ref{perminvars} proves the ``if" direction of theorem \ref{mainresult}, and then using this, proves the ``only-if" direction. Finally, section \ref{sec:concl} draws out some implications and poses questions for further inquiry.

\section{Background}\label{sec:background}

Throughout this paper, $A$ denotes an arbitrary commutative, unital ring, $k$ denotes a field, $p$ denotes the characteristic of $k$, $k[\mathbf{x}]$ denotes the polynomial ring $k[x_1,\dots,x_n]$, $[n]$ denotes the set $\{1,\dots,n\}$, and $G$ denotes a finite group with a faithful action on $k[\mathbf{x}]$ by permutations of the $x_i$'s, or on $A$ by arbitrary automorphisms. In \S \ref{sec:onlyif}, the prime number $p$ will be conceptually prior to $k$, and $k$ will be chosen to satisfy $\Char k = p$.

\subsection{Cohen-Macaulayness}\label{sec:CMness}

Recall that the \textbf{depth} of a local noetherian ring is the length of the longest regular sequence contained in the maximal ideal. The depth is always bounded above by the dimension. When equality is achieved, the ring is said to be \textbf{Cohen-Macaulay}. A general noetherian ring is defined to be Cohen-Macaulay if its localization at every maximal, or equivalently at every prime, is Cohen-Macaulay (\cite[Definition 2.1.1 and Theorem 2.1.3(b)]{brunsherzog}).

Although there has been work on extending the theory of Cohen-Macaulayness to the non-noetherian setting (\cite{hamiltonmarley}), in this paper we will follow tradition by regarding noetherianity as a requirement of Cohen-Macaulayness.

Cohen-Macaulayness is automatic for artinian rings, since if the dimension is zero, the depth of a localization cannot be strictly lower than this. For example, fields are Cohen-Macaulay. Noetherian regular rings, for example polynomial rings over fields, are also Cohen-Macaulay (\cite[Corollary 2.2.6]{brunsherzog}).

For our purposes it will be necessary to know how the Cohen-Macaulayness of a ring relates to that of a flat extension. The needed fact (\cite[Theorem 2.1.7]{brunsherzog}) is that if $A\rightarrow B$ is a flat extension of noetherian rings, then $B$ is Cohen-Macaulay if and only if, for each prime ideal $\mfq$ of $B$ and its contraction $\mfp$ in $A$, both $A_\mfp$ and $B_\mfq/\mfp B_\mfq$ are Cohen-Macaulay. It is enough to quantify this statement over maximal ideals $\mfq$ of $B$. We will use this fact repeatedly in \S \ref{sec:inertiaCM}.

%In the special case that $A\rightarrow B$ is faithfully flat, then every prime ideal of $A$ is contracted from $B$, so if $B$ is Cohen-Macaulay then $A$ is too.

When a noetherian ring is finite over a regular subring, Cohen-Macaulayness is related to flatness as a module over the subring. In the traditional situation of invariant theory, this fact has a particularly nice formulation. For if $k[\mathbf{x}]$ is a polynomial ring over a field, and $G$ acts by graded automorphisms, then $k[\mathbf{x}]^G$ is finitely generated and graded, and the Noether normalization lemma guarantees a graded polynomial subring (generated by a {\em homogeneous system of parameters}) over which $k[\mathbf{x}]^G$ is finite. In this situation, $k[\mathbf{x}]^G$ is Cohen-Macaulay if and only if it is a {\em free} module over this subring (the {\em Hironaka criterion}). We will not build on this fact directly, but we mention it both because it motivates interest in Cohen-Macaulayness, and because we do use a result (\cite[Theorem A.1]{hersh2}) that depends on it, whose proof we outline in the next section.

\subsection{Combinatorial commutative algebra and PL topology}\label{sec:CCAandPL}

The proof of the ``if" direction of theorem \ref{mainresult} relies on results in combinatorial commutative algebra and some basic facts about PL topology. For motivation, we describe the plan of the proof before recalling these results.

By work of Adriano Garsia and Dennis Stantion \cite{garsiastanton}, refined by Victor Reiner in \cite{hersh2}, Cohen-Macaulayness of the polynomial invariant ring $k[\mathbf{x}]^G$ can be deduced from the Cohen-Macaulayness of the {\em Stanley-Reisner ring} of a certain cell complex (specifically a {\em boolean complex}) that depends on $G$. The Cohen-Macaulayness of this Stanley-Reisner ring can in turn be deduced from information about the complex that depends only on the homeomorphism class of its total space. For $G$ generated as in theorem \ref{mainresult}, a recent result of Christian Lange \cite{lange2} hands us this topological information. This is the structure of the proof, which will be assembled in section \ref{sec:if}. Here, we recall the needed results and definitions regarding boolean complexes and Stanley-Reisner rings.

Let $P$ be a finite poset and $k$ a field.

\begin{definition}\label{def:SRofposet}
The \textbf{Stanley-Reisner ring} of $P$ over $k$, written $k[P]$, is the quotient of the polynomial ring $k[\{y_\alpha\}_{\alpha\in P}]$, with indeterminates indexed by the elements of $P$, by the ideal generated by products $y_\alpha y_\beta$ indexed by incomparable pairs $\alpha,\beta\in P$.
\end{definition}

\begin{remark}
This is a special case of a more general definition, which we will not use directly: the Stanley-Reisner ring of a simplicial complex. (We will use a further generalization -- see definition \ref{def:SRofboolean} below.) The Stanley-Reisner ring of a poset is nothing but the Stanley-Reisner ring of the {\em chain complex} of the poset, i.e. the simplicial complex with vertex set the elements of the poset, whose simplices are the chains in the poset. It is helpful to keep in mind that the Stanley-Reisner ring of a poset has an underlying simplicial complex as well.
\end{remark}

Write $[n] = \{1,\dots,n\}$. Let $B_n$ be the boolean algebra on the set $[n]$, i.e. the set of subsets of $[n]$, ordered by inclusion. Then the Stanley-Reisner ring $k[B_n\setminus\{\emptyset\}]$ is, in a sense that can be made precise, a coarse approximation of the polynomial ring $k[\mathbf{x}]$. In particular, it carries a natural action of $S_n$ via the latter's action on the set $[n]$, and if $G\subset S_n$, then $k[\mathbf{x}]^G$ is Cohen-Macaulay whenever $k[B_n\setminus\{\emptyset\}]^G$ is Cohen-Macaulay. This is the content of \cite[Theorem A.1]{hersh2}. 

The proof is given in full there, and also in great detail in \cite[Section 2.5.3]{blumsmith}, and in any case is essentially a characteristic-neutral reformulation of an argument of Adriano Garsia and Dennis Stanton in \cite{garsiastanton}, building on Garsia's earlier work \cite{garsia}. However, we would like this result to be better-known, so we indicate the line of proof.

As mentioned in section \ref{sec:CMness}, a finitely generated graded $k$-algebra is Cohen-Macaulay if and only if it is free as a module over the subring generated by any homogeneous system of parameters. Thus, Cohen-Macaulayness can be established by showing the existence of a module basis over such a subring. For any $G\subset S_n$, $k[\mathbf{x}]^{S_n}$ and $k[B_n\setminus\{\emptyset\}]^{S_n}$ are such subrings, respectively, of $k[\mathbf{x}]^G$ and $k[B_n\setminus\{\emptyset\}]^G$, and they are isomorphic. Thus, Cohen-Macaulayness may be passed from $k[B_n\setminus\{\emptyset\}]^G$ to $k[\mathbf{x}]^G$ by showing that the existence of a module basis for the former over the common subring $k[B_n\setminus\{\emptyset\}]^{S_n} \cong k[\mathbf{x}]^{S_n}$ implies the existence of a basis for the latter. In \cite{garsia}, Garsia introduced a $k$-linear, $S_n$-equivariant map $\mathscr{G}: k[B_n\setminus\{\emptyset\}] \rightarrow k[\mathbf{x}]$ sending
\[
y_U \mapsto \prod_{i\in U} x_i,
\]
where $U\in B_n\setminus\{\emptyset\}$ is any nonempty subset of $[n]$. The map $\mathscr{G}$ is first extended multiplicatively to all monomials of $k[B_n\setminus\{\emptyset\}]$, and then $k$-linearly to the whole ring. This map is an isomorphism of $k$-vector spaces, and also, in a sense made precise in \cite[Proposition 2.5.66]{blumsmith}, a coarse approximation of a ring homomorphism. In particular, for any $G\subset S_n$, if $k[B_n\setminus\{\emptyset\}]^G$ is Cohen-Macaulay, it maps an appropriately chosen $k[B_n\setminus\{\emptyset\}]^{S_n}$-basis of $k[B_n\setminus\{\emptyset\}]^G$ to a $k[\mathbf{x}]^{S_n}$-basis of $k[\mathbf{x}]^G$. This statement about bases was proven by Garsia and Stanton in \cite{garsiastanton} with $k=\QQ$, in which case both rings are automatically Cohen-Macaulay -- Garsia and Stanton's interest was in the explicit construction of bases -- but it was observed by Reiner in \cite[Theorem A.1]{hersh2} that the argument is characteristic-neutral and so allows one to deduce Cohen-Macaulayness of $k[\mathbf{x}]^G$ from that of $k[B_n\setminus\{\emptyset\}]^G$ in the modular situation.

\begin{remark}\label{rmk:transfer}
Garsia \cite{garsia}, Garsia-Stanton \cite{garsiastanton}, and Reiner \cite{hersh2} all refer to the map $\mathscr{G}$ as the {\em transfer map}. Other authors in invariant theory (\cite{neuselsmith}, \cite{smith95}) use the same phrase to denote the $A^G$-linear map
\begin{align*}
\Tr : A &\rightarrow A^G\\
x &\mapsto \sum_{g\in G} g(x).
\end{align*}
While this latter map is also called the {\em trace}, there are well-established usages of {\em transfer} to describe maps analogous to $\Tr$ in both topology and group theory, so we prefer to call $\mathscr{G}$ the {\em Garsia map} to avoid competition for the term and to honor Garsia's introduction of it in \cite{garsia}. The present paper makes no use of the Garsia map except implicitly in quoting \cite[Theorem A.1]{hersh2}.
\end{remark}

The work cited above reduces proving Cohen-Macaulayness of $k[\mathbf{x}]^G$ to the analogous statement for $k[B_n\setminus\{\emptyset\}]^G$. The Cohen-Macaulayness of this latter ring can be assessed using a topological criterion, following a general philosophy in Stanley-Reisner theory that the Cohen-Macaulayness of a Stanley-Reisner ring is equivalent to a condition on the homology of the underlying simplicial complex. In the present situation, $k[B_n\setminus\{\emptyset\}]^G$ is not the Stanley-Reisner ring of a poset or simplicial complex, but it turns out to be the Stanley-Reisner ring of a {\em boolean complex}. We recall the needed definitions:

\begin{definition}\label{def:booleancomplex}
A \textbf{boolean complex} is a regular CW complex in which every face has the combinatorial type of a simplex.
\end{definition}
This is a mild generalization of a simplicial complex, in which it is possible for two faces to intersect in an arbitrary subcomplex rather than a single subface. (For example, two faces can have all the same vertices.) See figure \ref{fig:booleancomplex}. The terminology is due to Garsia and Stanton in \cite{garsiastanton}.

The \textbf{face poset} of a cell complex is the poset whose elements are the cells (\textbf{faces}), and the relation $\alpha \leq \beta$ means that $\alpha$'s closure is contained in $\beta$'s closure. For our purposes it is convenient to modify this definition to include an additional \textbf{empty face} $\emptyset$, with $\emptyset \leq \alpha$ for all faces $\alpha$. With this convention, a boolean complex can be characterized as a regular CW complex whose face poset has the property that every lower interval is a finite boolean algebra; this is the etymology of the name {\em boolean complex}. Face posets of boolean complexes are referred to as {\em simplicial posets}, a term introduced by Richard Stanley in \cite{enumcomb}.

%{\color{red} That is to me a lot of technical terminologies put all together. I have some trouble to remember the meaning of this definition again. I do not know if it is just me. Is there any intuitive way to see this definition in some sense? Maybe a detailed picture could help to get it without much effort? The pictures could illustrate as well the comment bellow? What do you think? }{\color{blue} I have added a figure from my thesis. Does it solve the problem or should I rewrite the definition?}

\begin{figure}
\begin{center}
\begin{tikzpicture}

\node at (-1.3,0) {A};
\draw [fill=black] (-1,0) circle [radius = 0.1];
\node at (1.3,0) {B};
\draw [fill=black] (1,0) circle [radius = 0.1];

\draw (-1,0) arc [radius = 1, start angle = 180, end angle = 360];
\node at (0,-1.2) {C};
\draw (1,0) arc [radius = 1, start angle = 0, end angle = 180];
\node at (0,1.2) {D};

\node at (5,-1.5) {$\emptyset$};
\node at (3.9,0) {A};
\node at (6.1,0) {B};
\node at (3.9,1) {C};
\node at (6.1,1) {D};

\tikzstyle{every node} = [draw, shape=circle]

\node (empt) at (5,-1) {};
\node (A) at (4.3,0) {};
\node (B) at (5.7,0) {};
\node (C) at (4.3,1) {};
\node (D) at (5.7,1) {};

\foreach \from/\to in {empt/A, empt/B, A/C, A/D, B/C, B/D}
    \draw (\from) -- (\to);

\end{tikzpicture}
\end{center}
\caption{Left: a boolean complex with total space homeomorphic to a circle. Right: its face poset.}\label{fig:booleancomplex}
\end{figure}
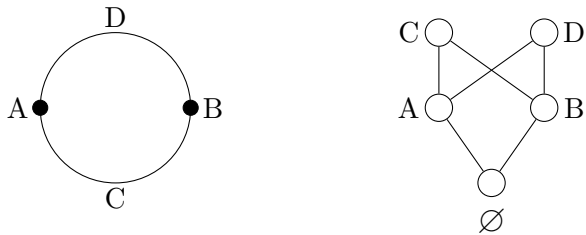

Stanley generalized the notion of a Stanley-Reisner ring to a boolean complex $\Omega$ in \cite{stanley}, as follows. Let $k$ be a field and let $Q$ be the face poset of $\Omega$, including the minimal element $\emptyset$. Let $k[\{z_\alpha\}_{\alpha\in Q}]$ be a polynomial ring with indeterminates indexed by the elements of $Q$. Let $I$ be the ideal of this ring generated by:
\begin{enumerate}
\item the element $z_\emptyset - 1$;
\item all products $z_\alpha z_\beta$ where $\alpha,\beta\in Q$ have no common upper bound; and
\item all elements of the form
\[
z_\alpha z_\beta - z_{\alpha\wedge \beta} \sum_{\gamma\in lub(\alpha,\beta)} z_\gamma
\]
where $\alpha,\beta$ have at least one common upper bound and $lub(\alpha,\beta)$ denotes the (consequently nonempty) set of least upper bounds of $\alpha,\beta$.
\end{enumerate}

The greatest (common) lower bound $\alpha \wedge \beta$ of $\alpha$ and $\beta$ exists and is unique in the above formula because, as remarked above, every lower interval, and in particular the lower interval below any common upper bound for $\alpha,\beta$, is a boolean algebra and therefore a lattice. Thus whenever $\alpha,\beta$ have any common upper bound, they have a unique greatest common lower bound in some lower interval containing them both, and thus in the whole poset.

\begin{definition}\label{def:SRofboolean}
The quotient ring $k[\{z_\alpha\}_{\alpha\in P}]/I$ is called the \textbf{Stanley-Reisner ring} of $\Omega$ and denoted $k[\Omega]$.
\end{definition}

\begin{remark}
Definition \ref{def:SRofboolean} generalizes definition \ref{def:SRofposet}, but in a somewhat subtle way. Given a poset $P$, one can form its chain complex $\Omega$, regarded as a boolean complex, and then the $k[P]$ of \ref{def:SRofposet} will be isomorphic to the $k[\Omega]$ of \ref{def:SRofboolean}; however, the poset $Q$ of the latter definition will not be $P$. Instead, its elements will be {\em chains} in $P$, ordered by inclusion. For example, let $P = B_2\setminus\{\emptyset\}$. Then the elements of $P$ may be abbreviated $1$, $2$, and $12$, and the only incomparable pair consists of $1$ and $2$. Thus 
\[
k[P] = k[y_1,y_2,y_{12}]/(y_1y_2)
\]
according to definition \ref{def:SRofposet}. However, $Q$ consists of the six chains in $P$: the empty chain $\emptyset$, three chains of length 1 ($1$, $2$, and $12$), and two chains of length 2 ($1\subset 12$ and $2 \subset 12$). Thus 
\[
k[\Omega] = k[z_\emptyset, z_1,z_2,z_{12}, z_{1\subset 12}, z_{2\subset 12}] / I
\]
where $I$ is as described above. The isomorphism is given by mapping the $z$ of a given chain to the product of $y$'s corresponding to elements of the chain, for example $z_{1\subset 12}\mapsto y_1y_{12}$. Indeed, the definition of $I$ becomes much more transparent after considering why this map is an isomorphism.
\end{remark}

The ring of interest to us is the invariant ring $k[B_n\setminus\{\emptyset\}]^G$ inside the Stanley-Reisner ring of the poset $B_n\setminus\{\emptyset\}$. This ring can be identified with the Stanley-Reisner ring of a boolean complex using a result of Victor Reiner, as follows. Let $\Delta$ be the order complex of $B_n\setminus\{\emptyset\}$, i.e. the simplicial complex whose vertices are the elements of $B_n\setminus\{\emptyset\}$, and whose faces are the chains in $B_n\setminus\{\emptyset\}$. As a simplicial complex, $\Delta$ is the barycentric subdivision of an $(n-1)$-simplex, thus it is topologically an $(n-1)$-ball. See figure \ref{fig:ordercomplex}.

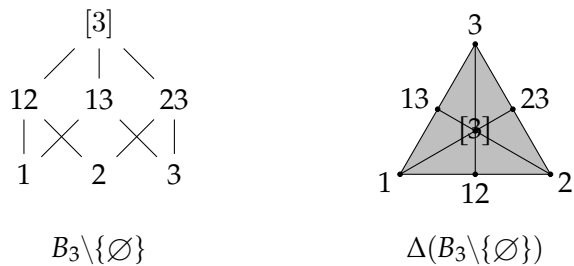
\begin{figure}
\begin{center}
\begin{tikzpicture}

\node (v1) at (-3,1) {$1$};
\node (v2) at (-2,1) {$2$};
\node (v3) at (-1,1) {$3$};
\node (e12) at (-3,2) {$12$};
\node (e13) at (-2,2) {$13$};
\node (e23) at (-1,2) {$23$};
\node (tri) at (-2,3) {[3]};
\node at (-2,0) {$B_3\setminus\{\emptyset\}$};

\foreach \from/\to in {v1/e12, v1/e13, v2/e12, v2/e23, v3/e13, v3/e23, e13/tri, e23/tri, e12/tri} 
    \draw (\from) -- (\to);

\draw [fill = lightgray] (2,1) -- (4,1) -- (3,2.732) -- (2,1);
\draw (2,1) -- (3.5,1.866);
\draw (3,1) -- (3,2.732);
\draw (4,1) -- (2.5,1.866);
\draw [fill] (2,1) circle [radius=0.03];
\draw [fill] (4,1) circle [radius=0.03];
\draw [fill] (3,1) circle [radius=0.03];
\draw [fill] (2.5,1.866) circle [radius=0.03];
\draw [fill] (3,2.732) circle [radius=0.03];
\draw [fill] (3.5,1.866) circle [radius=0.03];
\draw [fill] (3,1.577) circle [radius=0.03];
\node (V1) at (1.8,0.9) {$1$};
\node (V2) at (4.2,0.9) {$2$};
\node (V3) at (3,3) {$3$};
\node (E12) at (3,0.75) {$12$};
\node (E13) at (2.2,2.016) {$13$};
\node (E23) at (3.8,2.016) {$23$};
\node (Tri) at (3,1.577) {$[3]$};
\node at (3,0) {$\Delta(B_3\setminus\{\emptyset\})$};

\end{tikzpicture}
\end{center}
\caption{The poset $B_3\setminus\{\emptyset\}$, and its order complex, which is a 2-ball.}\label{fig:ordercomplex}
\end{figure}

The simplicial complex $\Delta$ carries a natural simplicial action of $S_n$, via the latter's action on $[n]$. The quotient cell complex $\Delta / G$ is usually not simplicial, but it is a boolean complex. This is because $\Delta$ is a {\em balanced} complex, and the action of $G$ is a balanced action. %{\color{red} What does balanced means in both definition?  Is there a easy picture example to see why that the quotient is not simplicial. How this is linked with balanced actions? I would also illustrate next definition by picture if possible. Maybe we could move definition before. } 

\begin{definition}
A boolean complex of dimension $d$ is \textbf{balanced} if there is a labeling of its vertices by $d+1$ labels such that the vertices of any one face have distinct labels. Given such a labeling, a cellular action by a group is a \textbf{balanced action} if it preserves the labeling.
\end{definition}

In the present case, the vertices of $\Delta$ are the nonempty subsets of $[n]$, and thus $\Delta$ is balanced by associating a subset to its cardinality. (Here, $d=n-1$, so the $n$ possible cardinalities give the right number of labels.) The action of $S_n$ is clearly balanced with respect to this labeling. See figure \ref{fig:balanced}.

\begin{figure}
\begin{center}
\begin{tikzpicture}

\draw [fill = lightgray] (2,1) -- (4,1) -- (3,2.732) -- (2,1);
\draw (2,1) -- (3.5,1.866);
\draw (3,1) -- (3,2.732);
\draw (4,1) -- (2.5,1.866);
\draw [fill = purple] (2,1) circle [radius=0.05];
\draw [fill = purple] (4,1) circle [radius=0.05];
\draw [fill = teal] (3,1) circle [radius=0.05];
\draw [fill = teal] (2.5,1.866) circle [radius=0.05];
\draw [fill = purple] (3,2.732) circle [radius=0.05];
\draw [fill = teal] (3.5,1.866) circle [radius=0.05];
\draw [fill = blue] (3,1.577) circle [radius=0.05];
\node [purple] (V1) at (1.8,0.9) {$1$};
\node [purple] (V2) at (4.2,0.9) {$1$};
\node [purple] (V3) at (3,3) {$1$};
\node [teal] (E12) at (3,0.75) {$2$};
\node [teal] (E13) at (2.2,2.016) {$2$};
\node [teal] (E23) at (3.8,2.016) {$2$};
\node [blue] (Tri) at (3.15,1.577) {$3$};
%\node at (3,0) {$\Delta(B_3\setminus\{\emptyset\})$};

\end{tikzpicture}
\end{center}
\caption{The labeling of the order complex of $B_3\setminus\{\emptyset\}$, showing it is balanced.}\label{fig:balanced}
\end{figure}
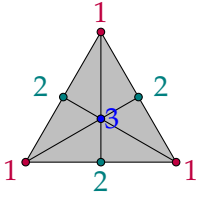

It is straightforward to check that the quotient of a balanced boolean complex by a balanced action is again a balanced boolean complex. (Details are given in \cite{blumsmith}, Lemma 2.5.86.) Thus $\Delta / G$ is a balanced boolean complex.

In \cite[Theorem 2.3.1]{reiner92}, Victor Reiner showed that if a group $G$ acts cellularly and balancedly on a balanced boolean complex $\Omega$, then the invariant ring $k[\Omega]^G$ inside the Stanley-Reisner ring of $\Omega$ is isomorphic to $k[\Omega / G]$, the Stanley-Reisner ring of the quotient boolean complex $\Omega / G$. In the present situation, this gives us 
\begin{equation}\label{eq:invarisSR}
k[\Delta / G] \cong k[B_n\setminus\{\emptyset\}]^G.
\end{equation}
Thus the problem is reduced to showing that $k[\Delta / G]$ is Cohen-Macaulay.

%\begin{remark}
%Reiner's proof in \cite[Theorem 2.3.1]{reiner92} is a calculation that gives the isomorphism explicitly. We refer the reader to this proof as evidence that definition \ref{def:SRofboolean} is the ``right" definition for the Stanley-Reisner ring of a boolean complex.
%\end{remark}

Finally, the Cohen-Macaulayness of $k[\Delta / G]$ can be assessed topologically. In general, the Cohen-Macaulayness of the Stanley-Reisner ring of a boolean complex $\Omega$ is equivalent (just as for a simplicial complex) to a condition on $|\Omega|$, the underlying topological space of $\Omega$, that depends only on its homeomorphism class. Namely, $k[\Omega]$ is Cohen-Macaulay if and only if 
\begin{equation}\label{eq:homvanishing}
\tilde H_i(|\Omega|; k) = 0\text{ and } H_i(|\Omega|,|\Omega|-q;k) = 0
\end{equation}
for all points $q\in |\Omega|$ and all $i<\dim \Omega$. (Here, $\tilde H_i(|\Omega|;k)$ is reduced singular homology and $H_i(|\Omega|,|\Omega| - q;k)$ is relative singular homology.) This theorem is the product of work of Gerald Reisner (building on work of Melvin Hochster), James Munkres, Richard Stanlely, and Art Duval. Reisner proved in \cite{reisner} that for a {\em simplicial} complex $\Omega$, Cohen-Macaulay\-ness of $k[\Omega]$ is equivalent to a homological vanishing condition that a priori depends on the simplicial structure and not just the underlying topological space. Munkres in \cite{munkres} showed that Reisner's condition is equivalent to the purely topological condition stated above. Richard Stanley in \cite{stanley} showed that the direction 
\[
\eqref{eq:homvanishing}\text{ is satisfied for all } q\in|\Omega|\text{ and }i<\dim \Omega\; \Rightarrow \;k[\Omega]\text{ is Cohen-Macaulay}
\] generalizes to boolean complexes, and Art Duval in \cite{duval} showed that this generalization is bidirectional. See \cite[\S 2.5.2]{blumsmith} for more details.

\begin{remark}
Since we only use Stanley-Reisner theory to show the ``if" direction of theorem \ref{mainresult} and thus we only need it to deduce Cohen-Macaulayness, and not the {\em failure} of Cohen-Macaulayness, the proof of \ref{mainresult} only uses Stanley's and not Duval's part of the generalization of \eqref{eq:homvanishing} to boolean complexes.
\end{remark}

% {\color{red} I would add some pictures here and there if it helps anyway to have a intuition of what is happening for some not too big $n$. I think this section is really cool and we can illustrate it which is cooler.} {\color{blue} How is this?}

Combining the results quoted above, we see that to demonstrate the Cohen-Macaulay\-ness of the ring $k[\mathbf{x}]^G$, it is sufficient to prove that the boolean complex $\Omega = \Delta / G$ satisfies the homological vanishing condition \eqref{eq:homvanishing} for all $x\in |\Delta / G|$ and all $i<n-1$. The proof of the ``if" direction of theorem \ref{mainresult} will consist in showing that this condition holds when $G$ is generated by transpositions, double transpositions, and 3-cycles.

This will be accomplished by quoting a recent result of Christian Lange (see section \ref{sec:if}) that is stated in the language of piecewise-linear (PL) topology, so we also need to recall a few definitions and a basic fact from this field. We follow \cite[Section 3.1]{lange2} and \cite[Chapters 1 and 2]{rourkesanderson} for these details. A \textbf{polyhedron} is a subset $X$ of $\RR^m$ in which each point has a compact cone neighborhood, i.e. given $x\in X$, there is a compact set $K\subset X$ such that (i) the union $S$ of line segments from $x$ to points of $K$ is contained in $X$, (ii) each point of $S\setminus \{x\}$ is on a unique such line segment from $x$, and (iii) $S$ is a neighborhood of $x$ in $X$, i.e. it contains an open subset of $X$ containing $x$. The set $S$ is called a \textbf{star} of $x$ in $X$, and $K$ is called a \textbf{link} of $x$. See figure \ref{fig:linkandstar}.

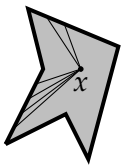
\begin{figure}
\begin{center}
\begin{tikzpicture}

\draw [line width=1.5, fill = lightgray] (-1,-1) -- (-0.5,0) -- (-0.7,0.8) -- (0.5,0.3) -- (0.1,-1.2) -- (-0.2,-0.6) -- (-1,-1);

\draw [fill] (0,0) circle [radius=0.03];
\node (point) at (0,-0.2) {$x$};

\draw (0,0) -- (-0.9,-0.8);
\draw (0,0) -- (-0.8,-0.6);
\draw (0,0) -- (-0.7,-0.4);
\draw (0,0) -- (-0.6,-0.2);

\draw (0,0) -- (-.46, 0.7);
\draw (0,0) -- (-.22, 0.6);

\end{tikzpicture}
\end{center}
\caption{A compact cone neighborhood of a point in $\RR^2$. The link $K$ is drawn in bold, and the star $S$ is the entire set, the union of segments from $x$ to the points of $K$. Some of these segments are also drawn. Note each point of $S\setminus\{x\}$ is on exactly one such segment.}
\label{fig:linkandstar}
\end{figure}

\begin{remark}\label{rmk:polyhedron}
This definition of {\em polyhedron} is a technical device, used here to define the concepts {\em piecewise-linear} and {\em polyhedral star}. It includes the more conventional meaning of a three-dimensional polytope as a special case, but is much, much broader. For example, any open subset of $\RR^n$, or of any polytope, is a polyhedron.

More broadly, our use of PL topology in this paper is only to serve a technical need linking Lange's result to our setting. 
\end{remark}

%\begin{remark}
%One can think of the star and link as giving information about the local structure of $X$ around $x$, analogous to the localization of a ring at a prime ideal. (For the ``if" direction of theorem \ref{mainresult} we will look at the topology of stars in a cell complex; for the ``only if" direction, we will look at the localization of a ring at a prime.)
%\end{remark}

If $X\subset \RR^m$ and $Y\subset \RR^n$ are polyhedra, a continuous map $f:X\rightarrow Y$ is a \textbf{piecewise-linear} (or \textbf{PL}) \textbf{map} if its graph $\{(x,f(x)):x\in X\}\subset \RR^{m+n}$ is a polyhedron. A \textbf{piecewise-linear} (or \textbf{PL}) \textbf{space} is a second-countable, Hausdorff topological space equipped with a covering by open sets $U_i$, each with a homeomorphism $\varphi_i:X_i\rightarrow U_i$ from a polyhedron $X_i$ in some $\RR^{m_i}$, such that the transition maps 
\[
\varphi_j^{-1}\circ\varphi_i|_{\varphi_i^{-1}(U_i\cap U_j)}
\]
are PL. A PL space is a \textbf{PL manifold} (with or without boundary) if the charts $X_i$ can be taken to be open subsets of $\RR^n$ or the half-space $\RR^{n-1}\times \RR^{\geq 0}$.

A subset $P$ of a PL space $Y$ is called a polyhedron if for each of the charts $\varphi_i:X_i\rightarrow U_i\subset Y$, the preimage $\varphi_i^{-1}(P)\subset X_i\subset \RR^{m_i}$ is a polyhedron.

If $X\subset \RR^n$ is a polyhedron and $x\in X$, one may always find a link and star for $x$ that are polyhedra (\cite[p. 5]{rourkesanderson}). It then follows from the definitions that if $Y$ is a PL space, any point $y$ of $Y$ has a neighborhood $S$ contained in some $U_i\ni y$, such that the preimage $\varphi_i^{-1}(S)\subset X_i$ is both a polyhedron and a star of $\varphi_i^{-1}(y)$ in $X_i$. We will refer to such an $S$ as a \textbf{polyhedral star} of $y$.

The key fact we need is that if $X$ is a polyhedron and $x\in X$, then any two polyhedral stars of $x$ in $X$ are PL-homeomorphic, in other words the star is a PL-homeomorphism invariant of $x$ (\cite[pp.~20--21]{rourkesanderson}). It follows from the above discussion that the same is true in any PL space.

If $Y$ is a PL manifold, one may take each chart $X_i$ to be an open subset in $\RR^n$ or $\RR^{n-1}\times \RR^{\geq 0}$. In any open subset of $\RR^n$, the star of a point $(x_1,\dots,x_n)$ may be taken to be the cube $[x_1-\varepsilon,x_1+\varepsilon]\times\dots\times[x_n-\varepsilon, x_n+\varepsilon]$ for sufficiently small $\varepsilon > 0$; and in $\RR^{n-1}\times \RR^{\geq 0}$ it can be taken to be the intersection of this cube with the closed half-space $\{x_n\geq 0\}$. In all cases, this is topologically a closed ball. It then follows from the fact quoted in the previous paragraph that {\em every} polyhedral star in a PL manifold is topologically a ball.

The ``if" direction of theorem \ref{mainresult} will be proven by quoting the result of Lange mentioned above to show that if $G$ is generated by transpositions, double transpositions, and 3-cycles, then $\Delta/G$ is a polyhedral star of a point in a PL manifold, and therefore a ball. Thus it meets the homological vanishing criterion described above, regardless of the field $k$.

\subsection{Generalities about group actions on a ring}\label{sec:groupactions}

The purpose of this section is to develop the commutative algebra needed to prove the general results in \S\ref{sec:inertiaCM}, which are then used in section \S\ref{sec:onlyif} to prove the ``only-if" direction of theorem \ref{mainresult}.

Let $1$ denote the group identity. (In commutative diagrams, let it also denote a trivial group.) Let $A^G$ denote the ring of invariants, and similarly for any subgroup of $G$. It is well known that $A$ is always integral over $A^G$ (\cite[Chapitre V \S 1.9, Proposition 22]{bourbaki}).

Let $\mfP\triangleleft A$ be a prime ideal.

Recall that the \textbf{decomposition group} $D_G(\mfP)$ of $\mfP$ is the stabilizer of $\mfP$ in $G$:
\[
D_G(\mfP) = \{g\in G: g\mfP = \mfP\}.
\]
The decomposition group acts on the integral domain $A/\mfP$. The \textbf{inertia group} $I_G(\mfP)$ of $\mfP$ is the kernel of this action: 
\[
I_G(\mfP) = \left\{g\in G:(g-1)A\subset \mfP\right\},
\]
where
\[
(g-1)A = \{ga - a: a\in A\}.
\]

The notations $I_G(\mfP)$ and $D_G(\mfP)$ implicitly specify the ring $A$ being acted on by $G$, since $\mfP$ belongs to $A$.

We recall some basic facts in this setup (\cite[Chapitre V \S 2.2, Th\'eor\`eme 2]{bourbaki}), which we use freely in what follows: (i) $G$ acts transitively on the prime ideals of $A$ lying over $\mfP^\star = \mfP\cap A^G$; and (ii) the extension of residue fields $\kappa(\mfP)/\kappa(\mfP^\star)$ is a normal field extension, and the canonical map from $D_G(\mfP)$ to the group of $\kappa(\mfP^\star)$-automorphisms of $\kappa(\mfP)$ is a surjection with kernel $I_G(\mfP)$, i.e. the sequence
\[
\begin{CD}
1@>>> I_G(\mfP) @>>> D_G(\mfP) @>>> \Aut_{\kappa(\mfP^\star)} (\kappa(\mfP)) @>>> 1
\end{CD}
\]
is exact.

If $N\triangleleft G$ is a normal subgroup, then the quotient group $G/N$ acts on the invariant ring $A^N$, and the decomposition and inertia groups in $G$ and $G/N$ relate straightforwardly. Note that, by their definitions, $I_N(\mfP) = I_G(\mfP)\cap N$ and $D_N(\mfP) = D_G(\mfP)\cap N$.

\begin{lemma}\label{lem:GNinertia}
We have
\[
D_{G/N}(\mfP\cap A^N) \cong D_G(\mfP)/D_N(\mfP)
\]
and
\[
I_{G/N}(\mfP\cap A^N) \cong I_G(\mfP)/I_N(\mfP).
\]
\end{lemma}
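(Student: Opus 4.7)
The plan is to define natural group homomorphisms $D_G(\mfP) \to D_{G/N}(\mfp)$ and $I_G(\mfP) \to I_{G/N}(\mfp)$ (where $\mfp := \mfP \cap A^N$) both given by $g \mapsto gN$, identify the kernels as $D_N(\mfP)$ and $I_N(\mfP)$ respectively (which is immediate from intersecting with $N$), and then prove surjectivity in each case using the two Bourbaki facts (i) and (ii) quoted just before the lemma, applied to the subgroup $N$ acting on $A$.

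First I would verify well-definedness: if $g \in D_G(\mfP)$, then $g$ preserves $\mfP$, so as a map on $A^N$ it preserves $\mfp = \mfP \cap A^N$. Since $N$ fixes $A^N$ pointwise, any other representative of $gN$ gives the same map on $A^N$. Similarly for the inertia map, if $(g-1)A \subset \mfP$, then restricting to $A^N$ gives $(g-1)A^N \subset \mfP \cap A^N = \mfp$, so $gN \in I_{G/N}(\mfp)$. The kernels are clear: $D_G(\mfP) \cap N = D_N(\mfP)$ and $I_G(\mfP) \cap N = I_N(\mfP)$ straight from the definitions.

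For surjectivity of the decomposition map, take $gN \in D_{G/N}(\mfp)$. Choosing any lift $g \in G$, we know $g$ acts on $A^N$ fixing $\mfp$, so $g\mfP$ is a prime of $A$ with $g\mfP \cap A^N = g(\mfP \cap A^N) = \mfp = \mfP \cap A^N$. By Bourbaki fact (i) applied to $N$ acting on $A$, the primes of $A$ lying over $\mfp$ form a single $N$-orbit, so there exists $n \in N$ with $ng\mfP = \mfP$. Then $ng \in D_G(\mfP)$ and $(ng)N = gN$, giving surjectivity.

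For the inertia map — the step I expect to be the main obstacle — take $gN \in I_{G/N}(\mfp)$. By the previous paragraph, lift $gN$ to some $g \in D_G(\mfP)$. The condition $gN \in I_{G/N}(\mfp)$ means $g$ acts trivially on $A^N/\mfp$, hence trivially on its fraction field $\kappa(\mfp)$; meanwhile $g$ acts on $\kappa(\mfP)$ via the $D_G(\mfP) \to \Aut_{\kappa(\mfP^\star)}(\kappa(\mfP))$ map of fact (ii), inducing an automorphism of $\kappa(\mfP)$ over $\kappa(\mfp)$. Applying fact (ii) to $N$ acting on $A$ (noting $\mfP \cap A^N = \mfp$), the map $D_N(\mfP) \to \Aut_{\kappa(\mfp)}(\kappa(\mfP))$ is surjective, so there exists $n \in D_N(\mfP)$ whose action on $\kappa(\mfP)$ agrees with that of $g$. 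Then $n^{-1}g$ acts trivially on $\kappa(\mfP)$, i.e., $n^{-1}g \in I_G(\mfP)$, and $(n^{-1}g)N = gN$. This yields surjectivity, and the first isomorphism theorem then delivers both isomorphisms.
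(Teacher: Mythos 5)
Your proposal is correct and takes essentially the same approach as the paper: exactness at the first two positions, surjectivity of the decomposition map via $N$-transitivity on primes over $\mfp$, and surjectivity of the inertia map by lifting to $D_G(\mfP)$ and correcting by an element of $D_N(\mfP)$ using Bourbaki (ii) for $N \curvearrowright A$. The only difference is presentational: the paper packages the inertia-surjectivity step as a formal $3\times 3$ diagram chase, whereas you carry out the same chase directly.
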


We believe this and the next lemma may be well-known; however, as we were unable to locate references, we include full proofs.

\begin{proof}
The sequences
\[
\begin{CD}
1 @>>> D_N(\mfP) @>>> D_G(\mfP) @>\varphi>> D_{G/N}(\mfP\cap A^N) @>>> 1
\end{CD}
\]
and
\[
\begin{CD}
1 @>>> I_N(\mfP) @>>> I_G(\mfP) @>\psi>> I_{G/N}(\mfP\cap A^N) @>>> 1
\end{CD}
\]
are exact in the first and second positions by the definitions; we have to prove surjectivity of $\varphi$ and $\psi$.

Consider $\varphi$ first. Suppose $g\in G$ is such that its image $\overline{g}$ in $G/N$ lies in $D_{G/N}(\mfP\cap A^N)$. Then, setting $\mfQ = g\mfP$, we have 
\[
\mfQ \cap A^N = \mfP\cap A^N.
\]
All primes of $A$ that intersect $A^N$ in $\mfP\cap A^N$ lie in the same orbit of $N$. Thus there exists $n\in N$ with $n\mfQ = \mfP$. Therefore $ng\mfP = \mfP$, i.e. $ng\in D_G(\mfP)$, and we have $\varphi(ng) = \overline{g}$. So $\varphi$ is surjective.

We establish the surjectivity of $\psi$ with a diagram chase. Let $\mfP' = \mfP \cap A^N$ and let $\mfP^\star = \mfP\cap A^G$. We have the following commutative diagram:
\[
\begin{CD}
  @. 1 @. 1 @. 1 @.  \\
 @.  @VVV  @VVV  @VVV  @.\\
1 @>>> I_N(\mfP) @>>> I_G(\mfP) @>\psi>> I_{G/N}(\mfP') @. \\
@.  @VVV @VVi_GV @VVi_{G/N}V @. \\
1 @>>> D_N(\mfP) @>\imath_D>> D_G(\mfP) @>\varphi>> D_{G/N}(\mfP') @>>> 1 \\
@.  @Vp_NVV @VVp_GV @VVp_{G/N}V @. \\
1 @>>> \Aut_{\kappa(\mfP')}(\kappa(\mfP)) @>>\imath_\kappa> \Aut_{\kappa(\mfP^\star)}(\kappa(\mfP)) @>>\xi> \Aut_{\kappa(\mfP^\star)}(\kappa(\mfP')) @>>> 1\\
@. @VVV @VVV @VVV @.\\
 @. 1 @. 1 @. 1 @.  
\end{CD}
\]
where $\kappa(\mfP),\kappa(\mfP'),\kappa(\mfP^\star)$ are the residue fields. The first and second row are exact by what we have just done. The third row is exact by consideration of the definitions and the fact that $\kappa(\mfP)$ is normal over $\kappa(\mfP')$ (by \cite[Chapitre V \S 2.2, Th\'eor\`eme 2(ii)]{bourbaki}, as recalled above), since field automorphisms always extend to normal extensions. The columns are also exact by \cite[Chapitre V \S 2.2, Th\'eor\`eme 2(ii)]{bourbaki}.

Let $g\in I_{G/N}(\mfP')$ be arbitrary and consider $i_{G/N}(g)$. Since $\varphi$ is surjective, there is a $y\in D_G(\mfP)$ with $\varphi(y) = i_{G/N}(g)$. Then
\[
1 = p_{G/N}\circ i_{G/N}(g) = p_{G/N}\circ \varphi(y) = \xi\circ p_G(y),
\]
so that $p_G(y)\in \ker \xi = \im \imath_\kappa$. Thus there is a $z\in \Aut_{\kappa(\mfP')}(\kappa(\mfP))$ with $\imath_\kappa(z) = p_G(y)$. Since $p_N$ is surjective, we have a $z'\in D_N(\mfP)$ with $p_N(z') = z$. Now consider
\[
y^\star = \imath_D(z')^{-1}y\in D_G(\mfP).
\]
We have
\begin{align*}
p_G(y^\star) &= p_G\circ \imath_D(z')^{-1} p_G(y) \\
&= \imath_\kappa\circ p_N(z')^{-1}p_G(y)\\
&= \imath_\kappa(z)^{-1}p_G(y)\\
&= p_G(y)^{-1}p_G(y)\\
&= 1.
\end{align*}
Thus $y^\star\in \ker p_G = \im i_G$, so there exists $g'\in I_G(\mfP)$ with $i_G(g') = y^\star$. Then
\begin{align*}
i_{G/N}\circ\psi (g') &= \varphi\circ i_G(g')\\
&= \varphi(y^\star)\\
&= \varphi\left(\imath_D(z')^{-1}y\right)\\
&= \varphi\circ\imath_D(z')^{-1}\varphi(y)\\
&= 1^{-1}i_{G/N}(g)=i_{G/N}(g).
\end{align*}
Since $i_{G/N}$ is injective, we can conclude $\psi(g')=g$. Thus $\psi$ is surjective.
\end{proof}

%It is well known that height of prime ideals is preserved in an invariant ring:

%\begin{lemma}\label{lem:invarspreserveheight}
%If $\mfp = \mfP\cap A^G$, then $\mfp$ in $A^G$ and $\mfP$ in $A$ have the same height.
%\end{lemma}

%\begin{proof}
%The extension $A^G\subset A$ is integral (\cite[Chapitre V \S 1.9, Proposition 22]{bourbaki}). By intersecting a chain of primes descending from $\mfP$ with $A^G$, one obtains a chain descending from $\mfp$, and it is at least as long, by incomparability (\cite[Corollary 4.18]{eisenbud}). It follows that
%\[
%\height \mfP \leq \height \mfp.
%\]
%On the other hand, given a chain of primes $\mfp_0\subset\mfp_1\subset\dots\subset\mfp_r = \mfp$ in $A^G$ of length $r$ descending from $\mfp$, one has a chain of primes $\mfP_0\subset \mfP_1\subset \dots\subset \mfP_r$ in $A$, with $\mfp_i = \mfP_i\cap A^G$ for each $i$, by lying-over and going-up (\cite[Proposition 4.15]{eisenbud}). Since $G$ acts transitively on the primes lying over $\mfp_r = \mfp$ by what was recalled above from \cite[Chapitre V \S 2.2, Th\'{e}or\`{e}me 2]{bourbaki}, there is an automorphism $g\in G$ of $A$ carrying $\mfP_r$ to $\mfP$. Thus $g(\mfP_0)\subset g(\mfP_1)\subset \dots\subset g(\mfP_r)=\mfP$ is a chain of primes descending from $\mfP$ of length $r$. It follows that
%\[
%\height \mfP \geq \height \mfp
%\]
%as well, implying equality.
%\end{proof}

The inertia group of a prime that survives a base change remains stable under that base change, and the decomposition group can only shrink:

\begin{lemma}\label{lem:basechangepreservesinertia}
Let $C$ be an arbitrary $A^G$-algebra, and let $B:= A\otimes_{A^G}C$. Let $G$ act on $B$ through its action on $A$ and trivial action on $C$. If there is a prime $\mfQ$ of $B$ pulling back to $\mfP$ in $A$, then $D_G(\mfQ)\subset D_G(\mfP)$, and $I_G(\mfQ) = I_G(\mfP)$.
\end{lemma}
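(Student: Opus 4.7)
The plan is to leverage the fact that the canonical ring homomorphism $\iota \colon A \to B$ defined by $a \mapsto a \otimes 1$ is $G$-equivariant, since $G$ acts trivially on $C$; the hypothesis that $\mfQ$ pulls back to $\mfP$ then reads $\iota^{-1}(\mfQ) = \mfP$, and by equivariance $\iota^{-1}(g\mfQ) = g\mfP$ for every $g \in G$.

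First I would dispatch the decomposition containment: if $g \in D_G(\mfQ)$, so $g\mfQ = \mfQ$, then $g\mfP = \iota^{-1}(g\mfQ) = \iota^{-1}(\mfQ) = \mfP$, hence $g \in D_G(\mfP)$. For the two inertia containments, the main tool is the pure-tensor identity $(g-1)(a \otimes c) = ((g-1)a) \otimes c$ in $B$, which holds because $G$ fixes $C$ pointwise. Using this, if $g \in I_G(\mfP)$ then $(g-1)A \subset \mfP$, and the identity together with $A$-bilinearity shows $(g-1)B$ lies in the ideal $\mfP B$ generated by $\iota(\mfP)$; but $\iota(\mfP) \subset \mfQ$, so $\mfP B \subset \mfQ$, giving $g \in I_G(\mfQ)$. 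Conversely, if $g \in I_G(\mfQ)$, then for every $a \in A$ we have $((g-1)a)\otimes 1 = (g-1)(a \otimes 1) \in \mfQ$, so $(g-1)a \in \iota^{-1}(\mfQ) = \mfP$, and hence $g \in I_G(\mfP)$.

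There is no serious obstacle here; the lemma is essentially a bookkeeping unpacking of the definitions once the equivariance of $\iota$ and the pure-tensor formula are identified. The one point requiring momentary care is the well-definedness of the pure-tensor formula on the balanced tensor product $A \otimes_{A^G} C$, which follows from the fact that $G$ acts trivially on $A^G$, so the prescription $g(a \otimes c) = (ga) \otimes c$ descends across the balancing relation.
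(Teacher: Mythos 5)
Your proposal is correct, and it takes a route that is partially the same and partially more elementary than the paper's. The decomposition-group containment and the direction $I_G(\mfQ)\subset I_G(\mfP)$ are handled essentially as in the paper (in the latter case the paper phrases the argument via the induced injection $A/\mfP\hookrightarrow B/\mfQ$, whereas you unwind it to the statement $(g-1)a\otimes 1\in\mfQ\Rightarrow (g-1)a\in\iota^{-1}(\mfQ)=\mfP$, but these are the same argument). Where you genuinely diverge is the direction $I_G(\mfP)\subset I_G(\mfQ)$: the paper invokes the canonical isomorphism $B/\tau(\mfP)B\cong A/\mfP\otimes_{A^G}C$ (citing Liu) to see that $g$ acts trivially on $B/\tau(\mfP)B$ and then passes to $B/\mfQ$ by noting cosets of $\mfQ$ are unions of cosets of $\tau(\mfP)B$. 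You instead observe directly that the pure-tensor identity $(g-1)(a\otimes c)=((g-1)a)\otimes c$, combined with $(g-1)A\subset\mfP$ and $\mfP B\subset\mfQ$, gives $(g-1)B\subset\mfP B\subset\mfQ$ by additivity, since $B$ is generated as an abelian group by pure tensors. Both proofs ultimately establish $(g-1)B\subset\mfP B\subset\mfQ$, but your calculation is more self-contained: it avoids the external citation and the detour through quotient rings, at the cost of a slightly lower-level bookkeeping step. Either version is perfectly acceptable.
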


\begin{proof}
Let $\tau: A\rightarrow B$ be the canonical map. By construction, $\tau$ is $G$-equivariant. Thus if $g\in G$ stabilizes $\mfQ\triangleleft B$ setwise, it also stabilizes the preimage $\mfP\triangleleft A$ setwise, and it follows that $D_G(\mfQ)\subset D_G(\mfP)$.

When $g\in D_G(\mfQ)$ and therefore $\in D_G(\mfP)$, it has an induced action on both $B/\mfQ$ and $A/\mfP$, and the $G$-equivariance of $\tau$ then implies that the induced map
\[
\overline\tau: A/\mfP \rightarrow B/\mfQ
\]
is $\langle g\rangle$-equivariant. If also $g\in I_G(\mfQ)$, then its action on $B/\mfQ$ is trivial. Since $\mfP$ is the full preimage of $\mfQ$, $\overline\tau$ is an injective map, and it follows that $g$'s action on $A/\mfP$ is also trivial, i.e. $g\in I_G(\mfP)$. Thus $I_G(\mfQ)\subset I_G(\mfP)$.

In the other direction, suppose $g\in I_G(\mfP)$. By \cite[Chapter 1, Corollary 1.13]{liu}, we have a canonical isomorphism
\begin{equation}\label{liuiso}
B/\tau(\mfP) B \cong A/\mfP\otimes_{A^G} C.
\end{equation}
Using only the fact that $g\in D_G(\mfP)$ and the $G$-equivariance of $\tau$, we already know that $g$ fixes $\mfP$ and $\tau(\mfP)$ setwise, and thus has well-defined actions on $A/\mfP$ and $B/\tau(\mfP)B$ that coincide via \eqref{liuiso}. But because $g$ is actually in $I_G(\mfP)$, the action on $A/\mfP$ is trivial, and therefore, by \eqref{liuiso}, the action of $g$ on $B/\tau(\mfP) B$ is also trivial. 

In other words, $g$ fixes the cosets of the additive subgroup $\tau(\mfP)B$ of $B$ setwise. Since $\mfQ$ pulls back to $\mfP$, it contains the image of $\mfP$, thus we have $\mfQ\supset \tau(\mfP)B$. Then the cosets of $\mfQ$ are unions of cosets of $\tau(\mfP)B$, and therefore $g$ fixes these setwise as well. In other words, $g$ acts trivially on $B/\mfQ$, i.e. $g\in I_G(\mfQ)$. Thus $I_G(\mfP)\subset I_G(\mfQ)$, and we conclude $I_G(\mfP) = I_G(\mfQ)$.
\end{proof}

\begin{remark}
Examining the proof of lemma \ref{lem:basechangepreservesinertia}, we see why the analogous equality to $I_G(\mfP)=I_G(\mfQ)$ may fail for decomposition groups. If $g\in D_G(\mfP)$, then we do have the $\langle g\rangle$-equivariant isomorphism \eqref{liuiso}, and therefore $g$ does act on the cosets of $\tau(\mfP)B$ in $B$, but the only one we know it fixes is $\tau(\mfP)B$ itself. In particular, $\mfQ$, which may be the union of many of these cosets, need not be fixed setwise, so that $g\notin D_G(\mfQ)$.
\end{remark}

Henceforth, let $\mfp$ be a prime of $A^G$. Our goal is to show that, in a suitable sense, the local structure of $A^G$ at $\mfp$ is determined by the inertia group of a prime of $A$ lying over $\mfp$. The precise statement is lemma \ref{ringslice} below. It is stated by Michel Raynaud in \cite[Chapitre X \S 1, Corollaire 1]{raynaud}, with lines of proof indicated. Because it is central to our results, we develop in detail the notation and tools that will be required to state and prove this lemma.

Let $C_\mathfrak{p}^{hs}$ be the strict henselization (see \cite[Chapitre VIII, Definition 4]{raynaud} or \cite[Definition 18.8.7]{grothendieck}) of $A^G$ at $\mathfrak{p}$, with respect to some embedding of $\kappa(\mfp)$ in its separable closure. Then $C_\mathfrak{p}^{hs}$ is faithfully flat over $(A^G)_{\mathfrak{p}}$, and of relative dimension zero (\cite[Proposition 18.8.8(iii)]{grothendieck}). Furthermore, $C_\mfp^{hs}$ and $(A^G)_\mfp$ are simultaneously noetherian (\cite[Proposition 18.8.8(iv)]{grothendieck}), and
\[
A^{hs}_{\mathfrak{p}}:=A \otimes_{A^G} C_\mathfrak{p}^{hs}
\]
is integral over $C_\mathfrak{p}^{hs}$ (as it is a base change of the integral morphism $A^G\rightarrow A$). 
Moreover, $G$ acts on $A^{hs}_{\mathfrak{p}}$ via the the first component of the tensor product, so that the map $A\rightarrow A_\mfp^{hs}$ is $G$-equivariant, and 
\[
(A^{hs}_{\mathfrak{p}})^G = C_\mathfrak{p}^{hs}
\]
since $A^G \rightarrow (A^G)_\mathfrak{p} \rightarrow  C_\mathfrak{p}^{hs}$ is  flat and the functor of invariants commutes with flat base change.

%{\color{red} we should put the reference for integrality here from Bourbaki, I like that the reader can find good reference for stuff we state there is the reference in the other document} {\color{blue} I added the citation at the beginning of the section rather than cite it again each time we use it, do you think that's ok?}

Let $\mfP$ be a prime ideal of $A$ lying over $\mfp$, and let $\mfQ$ be a prime ideal of $A_\mfp^{hs}$ lying over  the maximal ideal of $C_\mfp^{hs}$ corresponding to $\mfp$, and pulling back to $\mfP$ in $A$.

%\begin{lemma} With this notation, we have
From lemma \ref{lem:basechangepreservesinertia}, we have that
\[
I_G(\mathfrak{Q})= I_G(\mathfrak{P}).
\]

The action of $G$ on $A^{hs}_\mfp$ induces an action on its ideals. Since $A^{hs}_{\mathfrak{p}}$ is integral over $C^{hs}_{\mathfrak{p}}$, all of its maximal ideals lie over the one maximal of $C^{hs}_\mfp$. Because $C^{hs}_\mfp$ is the invariant ring under the action of $G$, this implies (\cite[Chapitre V \S 2.2, Th\'eor\`eme 2(i)]{bourbaki}) that the maximal ideals of $A^{hs}_\mfp$ comprise a single orbit for the action on ideals. The maximals are therefore finite in number. We denote them by $\mfM_1(=\mfQ),\dots,\mfM_s$. 

%The action induces a group homomorphism
%\begin{align*}
%G &\rightarrow S_s \\
%g &\mapsto \gamma_g
%\end{align*}
%where $\gamma_g$ is the permutation that records the action of $g$ on $\mfM_1,\dots,\mfM_s$; in other words, $\gamma_g(i) = j \Leftrightarrow g(\mfM_i) = \mfM_j$.

%{\color{red} I would prefer starting with saying for this that the action is transitive on the prime/maximal above and refer to bourbaki again (the right reference can be found in CM.pdf as well) because in the end that is what you using. This paragraph is a bit confusing to me as it is at the moment to me.} {\color{blue} Is this better?}

The product of canonical localization homomorphisms
\begin{equation} \label{iso} 
\phi : A_\mfp^{hs}\rightarrow \prod_{j=1}^s (A_\mfp^{hs})_{\mfM_j}
\end{equation}
is an isomorphism. Indeed, $A_\mfp^{hs}$ is the inductive limit of $C_\mfp^{hs}$-finite subalgebras (since it is integral over $C_\mfp^{hs}$). Since $A_\mfp^{hs}$ has only $s$ maximals, there exists a finite subalgebra containing $s$ maximals. Now view $A_\mfp^{hs}$ as the inductive limit just of the finite subalgebras that contain this one. For each of them, the analogous product of canonical localization morphisms is an isomorphism because $C_\mfp^{hs}$ is henselian (see \cite[Chapitre I, \S1 D\'{e}finition 1 and Proposition 3]{raynaud}); then the statement about \eqref{iso} follows because inductive limits commute with finite products.

\begin{lemma}\label{lem:noetherian}
If $A$ is a noetherian ring, then $A_\mfp^{hs}$ is noetherian too.
\end{lemma}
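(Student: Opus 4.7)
The plan is to leverage the product decomposition \eqref{iso} to reduce noetherianity of $A_\mfp^{hs}$ to that of each local factor, and then to identify each such factor as a strict henselization of a localization of $A$ itself.

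From \eqref{iso}, $A_\mfp^{hs} \cong \prod_{j=1}^s (A_\mfp^{hs})_{\mfM_j}$, and a finite product of rings is noetherian if and only if each factor is. So it suffices to prove that each $(A_\mfp^{hs})_{\mfM_j}$ is noetherian.

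For each $j$, set $\mfP_j = \mfM_j \cap A$. Since $A$ is noetherian, $A_{\mfP_j}$ is a noetherian local ring, and \cite[Proposition 18.8.8(iv)]{grothendieck} (cited in the excerpt) then ensures that its strict henselization $(A_{\mfP_j})^{hs}$, taken with respect to a suitable embedding of residue fields into a separable closure, is again noetherian. The key step is to identify $(A_\mfp^{hs})_{\mfM_j}$ with $(A_{\mfP_j})^{hs}$. To do this, I would realize $C_\mfp^{hs} = \varinjlim_\alpha B_\alpha$ as a filtered colimit over pointed étale local $(A^G)_\mfp$-algebras, apply the base change $A \otimes_{A^G} -$ (where each $A \otimes_{A^G} B_\alpha$ is étale over $A$, hence essentially of finite type over the noetherian ring $A$, hence noetherian), and localize at the preimage of $\mfM_j$. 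Since tensor products and localizations commute with filtered colimits, this recovers $(A_\mfp^{hs})_{\mfM_j}$ as a filtered colimit of pointed étale local $A_{\mfP_j}$-algebras, matching precisely the colimit that realizes the strict henselization $(A_{\mfP_j})^{hs}$.

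The main obstacle is this final identification step: it requires carefully tracking the universal property of strict henselization and arranging compatible embeddings of the residue fields at $\mfp$ and at each $\mfP_j$ throughout the directed system, and then invoking the uniqueness of the strict henselization. Once this is settled, noetherianity of every $(A_\mfp^{hs})_{\mfM_j}$, and hence of $A_\mfp^{hs}$, follows immediately.
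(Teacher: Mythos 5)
Your argument targets the same crucial isomorphism as the paper --- that each local factor $(A_\mfp^{hs})_{\mfM_j}$ is a strict henselization of $A_{\mfP_j}$ --- but reaches it by a genuinely different route. The paper passes through the semilocal ring $B := A\otimes_{A^G}(A^G)_\mfp$, integral over $(A^G)_\mfp$, and invokes \cite[Proposition 18.8.10 and Remarque 18.8.11]{grothendieck} to identify $(B_\mfP)^{hs}$ with a localization of $B\otimes_{(A^G)_\mfp}C_\mfp^{hs}$; that citation carries a hypothesis that $\kappa(\mfP)/\kappa(\mfp)$ has finite separable degree, which the paper verifies using the Bourbaki fact that the extension is normal with finite automorphism group $D_G(\mfP)/I_G(\mfP)$. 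Your approach instead unwinds $C_\mfp^{hs}$ as a filtered colimit of pointed \'etale local $(A^G)_\mfp$-algebras and chases the colimit through the base change and localization; this neatly sidesteps the finite-separable-degree step, since base change of a finite \'etale morphism stays finite \'etale regardless of residue field behavior, so the localized base changes are \'etale local $A_{\mfP_j}$-algebras automatically. Two refinements to note when fleshing this out: first, $A\otimes_{A^G}B_\alpha$ is \'etale over the localization $A\otimes_{A^G}(A^G)_\mfp$, not over $A$ itself (it is still essentially of finite type over $A$, so your noetherianity conclusion stands); second, the identification of $\varinjlim_\alpha (A\otimes_{A^G}B_\alpha)_{\mfM_j^{(\alpha)}}$ with $(A_{\mfP_j})^{hs}$ is best argued not by cofinality in the full system of \'etale neighborhoods, but by first observing that the colimit $(A_\mfp^{hs})_{\mfM_j}$ is itself a strictly henselian local ring (it is a direct factor of an integral extension of the henselian $C_\mfp^{hs}$, hence henselian, and its residue field is algebraic over the separably closed $\kappa(C_\mfp^{hs})$, hence separably closed) and then applying the universal property of strict henselization to a filtered colimit of \'etale local $A_{\mfP_j}$-algebras with compatible residue embeddings. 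You flagged exactly this identification as the main obstacle, which is the right instinct; the filling-in is routine once phrased via the universal property rather than cofinality.
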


\begin{proof}
Because of the isomorphism \eqref{iso}, it suffices to show that the localizations of $A_\mfp^{hs}$ at its maximal ideals $\mfM_j$ are noetherian rings, and because the action of $G$ on $A_\mfp^{hs}$ by automorphisms is transitive on these maximals, it suffices to show this for a single maximal. We will do this by showing that there is a maximal ideal $\mfM_j$ of $A_\mfp^{hs}$ such that
\[
(A_\mfp^{hs})_{\mfM_j}
\]
is isomorphic to the strict henselization of the noetherian local ring $A_\mfP$, whereupon the result will follow because strict henselization preserves noetherianity (\cite[Proposition 18.8.8(iv)]{grothendieck}).

Consider the local ring $(A^G)_\mfp$. By slight abuse of notation, let us call its maximal ideal $\mfp$. Note that the residue field $\kappa(\mfp)$ is the same whether $\mfp$ refers to the prime in $A^G$ or in $(A^G)_\mfp$, so we can write $\kappa(\mfp)$ without ambiguity.  Then the maximal ideals in the ring
\[
B := A\otimes_{A^G} (A^G)_\mfp
\]
%(\cite[Chapitre V \S 2.2, Th\'{e}or\`{e}me 2(i)]{bourbaki})
are in bijection with the prime ideals of $A$ lying over $\mfp\triangleleft A^G$. There are finitely many of these since they are subject to a transitive action by $G$, so $B$ is semilocal. It is also integral as an extension of $(A^G)_\mfp$ since this is a base change of the integral extension $A^G\subset A$. One of the prime ideals over $\mfp$ in $A$ is $\mfP$. By the same abuse of notation, let $\mfP$ also refer to the corresponding ideal in $B$; again, this does not introduce ambiguity when writing $\kappa(\mfP)$. Note that $B_\mfP = A_\mfP$ because $B$ is obtained from $A$ by inverting some but not all of the elements in the complement of $\mfP$.

Because $B$ is semilocal and integral over $(A^G)_\mfp$ (and $\mfP$ and $\mfp$ are maximal ideals of these rings respectively), if we can show that the extension of residue fields $\kappa(\mfP) / \kappa(\mfp)$ has finite separable degree, then it will follow from \cite[Proposition 18.8.10 and its proof, and Remarque 18.8.11]{grothendieck} that the strict henselization
\[
(B_{\mfP})^{hs}
\]
of the localization $B_{\mfP}$ (with respect to some embedding of its residue field in a separable closure) is isomorphic to the localization of
\[
B\otimes_{(A^G)_\mfp} C_\mfp^{hs}
\]
at some maximal ideal, since $C_\mfp^{hs}$ is a strict henselization of $(A^G)_\mfp$. But we also have
\begin{align*}
B\otimes_{(A^G)_\mfp} C_\mfp^{hs} &= A\otimes_{A^G} (A^G)_\mfp \otimes_{(A^G)_\mfp} C_\mfp^{hs} \\
&= A\otimes_{A^G} C_\mfp^{hs}\\
&= A_\mfp^{hs}.
\end{align*}
Thus the conclusion from \cite[18.8.10 and 18.8.11]{grothendieck} will actually be that
\[
(A_\mfP)^{hs} = (B_\mfP)^{hs} \cong (A_\mfp^{hs})_{\mfM_j}
\]
for some maximal ideal $\mfM_j$ of $A_\mfp^{hs}$. This is the desired conclusion, so it remains to show that $\kappa(\mfP)/\kappa(\mfp)$ has finite separable degree.

Now return $\mfp,\mfP$ to the setting of $A^G$ and $A$, recalling that the residue fields $\kappa(\mfp),\kappa(\mfP)$ do not change. From \cite[Chapitre V, \S 2.2(ii)]{bourbaki} we have that $\kappa(\mfP)/\kappa(\mfp)$ is a normal field extension, and the group of $\kappa(\mfp)$-automorphisms of $\kappa(\mfP)$ is isomorphic to
\[
D_G(\mfP)/I_G(\mfP).
\]
This is a subquotient of the finite group $G$ and is therefore finite. For a normal field extension, infinite separable degree would imply infinitely many automorphisms. Thus $\kappa(\mfP)/\kappa(\mfp)$ is an extension of finite separable degree, and the proof is complete.
\end{proof}

The action of $G$ on $A_\mfp^{hs}$ induces, via the isomorphism $\phi$ of \eqref{iso}, an action on $\prod_1^s(A_\mfp^{hs})_{\mfM_j}$: it is the unique action on this ring such that $\phi$ is $G$-equivariant. Because $\phi$ is the product of the canonical localization maps
\[
\phi_j:A^{hs}_\mfp \rightarrow(A_\mfp^{hs})_{\mfM_j},
\]
it is possible to write down this action explicitly. Via the isomorphism $\phi$ of \eqref{iso} we associate uniquely to $a\in A_\mfp^{hs}$ the $s$-tuple
\begin{equation}  \label{coord}
\phi(a) = (a_{\mfM_1}, \cdots ,a_{\mfM_s})\in \prod_{j=1}^s (A^{hs}_{\mathfrak{p}})_{\mathfrak{M}_j}
\end{equation} 
where each $a_{\mfM_j}$ is the image in $(A_\mfp^{hs})_{\mfM_j}$ of $a$ under $\phi_j$. If $g\in G$ maps $\mfM_i$ to $\mfM_j$, then it also induces an isomorphism
\begin{align*}
(A_\mfp^{hs})_{\mfM_i} &\xrightarrow{g} (A_\mfp^{hs})_{\mfM_j}\\
a/s &\mapsto ga/gs
\end{align*}
of the localizations that makes the following square
\[
\begin{CD}
A_\mfp^{hs} @>g>> A_\mfp^{hs} \\
@V\phi_iVV @VV\phi_jV\\
(A_\mfp^{hs})_{\mfM_i} @>>g> (A_\mfp^{hs})_{\mfM_j}
\end{CD}
\]
commutative. By such isomorphisms, $G$ acts on the disjoint union of the localizations $(A_\mfp^{hs})_{\mfM_j}$. Given an $\alpha\in (A_\mfp^{hs})_{\mfM_i}$, if one chooses $a\in A_\mfp^{hs}$ with $\phi_i(a) = \alpha$, then the commutativity of this square can be rewritten as 
\[
g\alpha = \phi_j(ga).
\]
Note that this statement is true regardless of the choice of $a$. For any such choice, writing $\alpha = a_{\mfM_i}$ and $\phi_j(ga) = (ga)_{\mfM_j} = (ga)_{g(\mfM_i)}$, this becomes
\[
g(a_{\mfM_i}) = (ga)_{g(\mfM_i)},
\]
or equivalently,
\begin{equation}\label{actloc}
g(a_{g^{-1}(\mfM_j)}) = (ga)_{\mfM_j}.
\end{equation}
Thus, for any $a\in A_\mfp^{hs}$, the $i$th coordinate of $\phi(a)$ determines the $j$th coordinate of $\phi(ga)$, without requiring additional information about $a$. Then the action of $G$ on $\prod_1^s (A_\mfp^{hs})_{\mfM_j}$ induced by $\phi$ may be written 
\begin{equation}\label{actprod}
g(a_{\mfM_1},\dots,a_{\mfM_s}) = \left(g\left(a_{g^{-1}(\mfM_1)}\right),\dots, g\left(a_{g^{-1}(\mfM_s)}\right)\right).
\end{equation}
Indeed, if $a\in A_\mfp^{hs}$, then the left side of this formula is $g\phi(a)$, and the right side is $\phi (ga)$ by \eqref{actloc}.

Because $I_G(\mfQ)$ stabilizes $\mfQ = \mfM_1$, it acts on $(A_\mfp^{hs})_\mfQ$. In this setting, we have the following lemma. As mentioned above, this lemma was stated by Michel Raynaud in \cite[Chapitre X \S 1, Corollaire 1]{raynaud}, with the proof sketched. It is the needed statement that the local structure of $A^G$ is determined by the inertia groups. Because it is critical to our results, we give a detailed proof.

\begin{lemma}[Raynaud]\label{ringslice}
We have a ring isomorphism 
\[
(A^{hs}_\mfp)_\mfQ^{I_G(\mfP)}\cong C_\mathfrak{p}^{hs}.
\]
\end{lemma}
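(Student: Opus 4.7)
The plan is to leverage the product decomposition $\phi$ of \eqref{iso} and reduce to a general projection principle for invariants of a product ring whose factors are transitively permuted. Under $\phi$, the ring $A_\mfp^{hs}$ is identified with $\prod_{j=1}^s(A_\mfp^{hs})_{\mfM_j}$, whose first factor is $(A_\mfp^{hs})_\mfQ$, and $G$ acts by formula \eqref{actprod}.

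The first step is to show that in this strict-henselian setting the decomposition and inertia groups of $\mfQ$ coincide. Applying the exact sequence from \cite[Chapitre V \S 2.2, Th\'eor\`eme 2]{bourbaki} to the extension $(A_\mfp^{hs})^G = C_\mfp^{hs}\subset A_\mfp^{hs}$ at the prime $\mfQ$---whose contraction to $C_\mfp^{hs}$ is the unique maximal ideal of $C_\mfp^{hs}$ by construction---yields
\[
D_G(\mfQ)/I_G(\mfQ) \cong \Aut_{\kappa(\mfQ\cap C_\mfp^{hs})}(\kappa(\mfQ)).
\]
Since $C_\mfp^{hs}$ is a strict henselization, the residue field $\kappa(\mfQ\cap C_\mfp^{hs})$ is separably closed, so $\kappa(\mfQ)$ is an algebraic normal extension that is purely inseparable over it and admits only the trivial automorphism. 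Combined with the identity $I_G(\mfQ) = I_G(\mfP)$ from lemma \ref{lem:basechangepreservesinertia}, we obtain $D_G(\mfQ) = I_G(\mfP)$.

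The second step is the general principle: if a finite group $G$ acts on a product $R = \prod_j R_j$ permuting the factors transitively, and $D$ is the stabilizer of $R_1$, then projection to the first factor restricts to a ring isomorphism $R^G \xrightarrow{\sim} R_1^D$. Injectivity is immediate from \eqref{actprod}: for a $G$-invariant tuple $(a_j)$ with $a_1=0$, for each $j$ pick $g$ with $g\cdot\mfM_1 = \mfM_j$ and read off $a_j = g(a_1) = 0$ from the $j$-th coordinate of $g\cdot(a_j)_j = (a_j)_j$. For surjectivity, given $a_1 \in R_1^D$ and coset representatives $g_1,\dots,g_s$ of $G/D$ with $g_j\cdot R_1 = R_j$, set $a_j := g_j(a_1)$; this is independent of the representative because $a_1$ is $D$-invariant, and $G$-invariance of $(a_j)$ follows from \eqref{actprod} using that $g_j^{-1}hg_k \in D$ whenever $h\cdot\mfM_k = \mfM_j$.

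Combining the two steps, $G$ acts on $\prod_j(A_\mfp^{hs})_{\mfM_j}$ transitively on factors (\cite[Chapitre V \S 2.2, Th\'eor\`eme 2(i)]{bourbaki}), with stabilizer $D_G(\mfQ)$ of the first factor, so
\[
C_\mfp^{hs} = (A_\mfp^{hs})^G \cong (A_\mfp^{hs})_\mfQ^{D_G(\mfQ)} = (A_\mfp^{hs})_\mfQ^{I_G(\mfP)}.
\]
The principal obstacle is the first step---the collapse $D_G(\mfQ) = I_G(\mfQ)$---which crucially uses the separable closedness of the residue field of the strict henselization. Once that is in hand, the rest is a formal orbit-stabilizer calculation with the explicit action formula \eqref{actprod}.
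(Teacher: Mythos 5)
Your proof is correct and follows essentially the same path as the paper's: first collapse $D_G(\mfQ)=I_G(\mfQ)$ using that the residue field of the strict henselization is separably closed, then use the product decomposition $\phi$ of \eqref{iso} together with the explicit action formula \eqref{actprod} to identify $(A_\mfp^{hs})^G = C_\mfp^{hs}$ with the $D_G(\mfQ)$-invariants of the first factor via projection. The paper carries out exactly the orbit-stabilizer verification you fold into your ``general principle'' (its parts (a)--(c) correspond to your well-definedness, injectivity, and surjectivity), so the abstraction is a cosmetic repackaging rather than a different route; the only small thing you leave implicit is the analogue of the paper's step (a)---that the projection of a $G$-invariant actually lands in $R_1^D$---though that is trivial from the first coordinate of $d(a)=a$ for $d\in D$.
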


\begin{proof}
Recall that $I_G(\mfP) = I_G(\mfQ)$.
Let $g_1, \cdots , g_s  \in G$ be a set of left coset representatives for $G / I_G(\mfP)$, with $g_1$ the identity.  
Since $C^{hs}_{\mathfrak{p}}$ is strictly henselian, its residue field is separably closed, so there are no nontrivial automorphisms of $\kappa(\mfQ)$ over it. Since the group of automorphisms of $\kappa(\mfQ)/\kappa(C_\mfp^{hs})$ is isomorphic to $D_G(\mfQ)/I_G(\mfQ)$, we have $D_G(\mfQ) = I_G(\mfQ)$, so that $I_G(\mfQ)$, which equals $I_G(\mfP)$, is the stabilizer of $\mfQ$. Thus, if we put $\mathfrak{M}_j:=g_j\mathfrak{Q}$, then the ideals $\mathfrak{M}_1$, ..., $\mathfrak{M}_s$ are exactly the maximal ideals of $A^{hs}_{\mathfrak{p}}$, and all of the above discussion applies.

We claim that if one restricts the canonical localization map 
\[
\phi_1:A_\mfp^{hs}\rightarrow (A_\mfp^{hs})_\mfQ
\]
to $C_\mfp^{hs}$, one obtains an isomorphism onto $(A_\mfp^{hs})_\mfQ^{I_G(\mfQ)}$. We see this as follows:

The map $\phi_1$ is the composition of $\phi$ with projection to the first coordinate. Because \eqref{actprod} makes $\phi$ a $G$-equivariant isomorphism, $a\in A_\mfp^{hs}$ is in $C_\mfp^{hs} = (A_\mfp^{hs})^G$ if and only if
\begin{equation}\label{invariantif}
\left(g\left(a_{g^{-1}(\mfM_1)}\right),\dots, g\left(a_{g^{-1}(\mfM_s)}\right)\right) = (a_{\mfM_1},\dots,a_{\mfM_s})
\end{equation}
for all $g\in G$. From \eqref{invariantif}, we will deduce the following:
\begin{enumerate}[label=(\alph*)]
\item If $a\in C_\mfp^{hs}$ is an arbitrary $G$-invariant, then $\phi_1(a)$ is invariant under $I_G(\mfP)$. Thus $\phi_1(C_\mfp^{hs})$ is contained in $(A_\mfp^{hs})_\mfQ^{I_G(\mfP)}$.\label{phi1imagecontained}
\item If $a\in C_\mfp^{hs}$ is an arbitrary $G$-invariant, then all the coordinates of $\phi(a)$ are determined by the first coordinate. Thus $a$ itself is determined by $\phi_1(a)$. In other words, the restriction of $\phi_1$ to $C_\mfp^{hs}$ is injective.\label{phi1inj}
\item If $\alpha\in (A_\mfp^{hs})_\mfQ^{I_G(\mfP)}$ is arbitrary, there exists an $a\in C_\mfp^{hs}$ with $\phi_1(a) = \alpha$. Thus the restriction of $\phi_1$ to $C_\mfp^{hs}$ is surjective.\label{phi1surj}
\end{enumerate}
This will suffice to establish the lemma.

To prove \ref{phi1imagecontained}, take $g\in I_G(\mfP)$. The condition in the first coordinate of \eqref{invariantif} is
\[
g(a_{g^{-1}(\mfM_1)}) = a_{\mfM_1}.
\]
For $g\in I_G(\mfP)=D_G(\mfQ)$, we have $g^{-1}(\mfM_1) = \mfM_1=\mfQ$, and this condition becomes 
\[
g(a_\mfQ) = a_\mfQ.
\]
Thus for the $G$-invariant $a$, we have that $a_\mfQ=\phi_1(a)$ is an $I_G(\mfP)$-invariant. Therefore, $\phi_1(C_\mfp^{hs})$ is contained in $(A_\mfp^{hs})_\mfQ^{I_G(\mfP)}$.

For \ref{phi1inj}, consider $g=g_j$ for $j=1,\dots,s$. The condition in the $j$th coordinate of \eqref{invariantif} is
\[
g(a_{g^{-1}(\mfM_j)}) = a_{\mfM_j}.
\]
Since $g_j^{-1}(\mfM_j) = \mfQ$, this becomes
\[
g_j(a_\mfQ) = a_{\mfM_j}.
\]
Letting $j=1,\dots,s$, this shows that if $a$ is a $G$-invariant, then all the coordinates of $\phi(a)$ are determined by $a_\mfQ$, which is $\phi_1(a)$, so $a$ itself is determined by $\phi_1(a)$. Therefore, the restriction of $\phi_1$ to $C_\mfp^{hs}$ is injective.

Lastly, for \ref{phi1surj}, let $\alpha \in (A_\mfp^{hs})_\mfQ^{I_G(\mfP)}$ be arbitrary. We construct a specific $a\in A_\mfp^{hs}$ with $\phi_1(a) = \alpha$, and show it lies in $C_\mfp^{hs}$. Set 
\[
a_{\mfM_j} := g_j(\alpha)
\]
for $j=1,\dots,s$, and let
\[
a := \phi^{-1}\left(a_{\mfM_1},\dots,a_{\mfM_s}\right) \in A_\mfp^{hs}.
\]
Note that this $a$ satisfies $\phi_1(a) = a_{\mfM_1} = g_1(\alpha) = \alpha$ since $g_1$ is the identity. To show that it also lies in $C_\mfp^{hs} = (A_\mfp^{hs})^G$, it is necessary and sufficient to show that $\phi(a)$ satisfies \eqref{invariantif} for all $g\in G$, i.e. that
\begin{equation}\label{isinvarj}
g(a_{g^{-1}(\mfM_j)}) = a_{\mfM_j}
\end{equation}
for all $g\in G$ and all $j=1,\dots,s$.

To do this, we first establish that
\begin{equation}\label{aQ}
a_{g(\mfQ)} = g(a_\mfQ)
\end{equation}
for all $g\in G$, and then use this to show \eqref{isinvarj} for all $g$ and all $j$.

To see \eqref{aQ}, first recall that $\alpha = a_{\mfM_1} = a_\mfQ$, and then use this and $\mfM_j = g_j(\mfQ)$ to rewrite the definition of each $a_{\mfM_j}$:
\[
a_{g_j(\mfQ)} = g_j(a_\mfQ).
\]
This establishes \eqref{aQ} in the particular case that $g$ is one of $g_1,\dots,g_s$. An arbitrary $g\in G$ has the form $g_jh$ for some $g_j$ and some $h\in I_G(\mfP)$. Since $\mfQ$ and $a_\mfQ = \alpha$ are both $I_G(\mfP)$-invariant, we have
\[
a_{g(\mfQ)} = a_{g_jh(\mfQ)} = a_{g_j(\mfQ)} = g_j(a_\mfQ) = g_jh(a_\mfQ) = g(a_\mfQ),
\]
and \eqref{aQ} is established for all $g\in G$.

Now we deduce \eqref{isinvarj}. If $g\in G$ is arbitrary, then 
\[
a_{g^{-1}(\mfM_j)} = a_{g^{-1}g_j(\mfQ)}
\]
because $g_j(\mfQ) = \mfM_j$, and 
\[
a_{g^{-1}g_j(\mfQ)} = g^{-1}g_j(a_\mfQ)
\]
by \eqref{aQ}. Thus $a_{g^{-1}(\mfM_j)} = g^{-1}g_j(a_\mfQ)$, and applying $g$ to the left on both sides yields
\[
g(a_{g^{-1}(\mfM_j)}) = g_j(a_\mfQ) = a_{\mfM_j},
\]
so condition \eqref{isinvarj} is met for all $g$ and all $j$, i.e. \eqref{invariantif} is met for all $g$. Thus 
\[
a \in (A_\mfp^{hs})^G = C_\mfp^{hs}.
\]
Since $\alpha\in (A_\mfp^{hs})_\mfQ^{I_G(\mfP)}$ was arbitrary, this shows that the restriction of $\phi_1$ to $C_\mfp^{hs}$ is surjective onto $(A_\mfp^{hs})_\mfQ^{I_G(\mfP)}$, completing the proof of isomorphism.
\end{proof}

\section{Inertia groups and Cohen-Macaulayness of invariant rings}\label{sec:inertiaCM}

Using lemma \ref{ringslice}, we can show that the Cohen-Macaulayness of a ring of invariants at a prime ideal $\mfp$ can always be tested in a faithfully flat neighborhood of $\mathfrak{p}$, and only depends on the action of the inertia group considered around this neighborhood. The precise statement is theorem \ref{m}.

We use this to derive an obstruction to Cohen-Macaulayness for a characteristic $p$ ring that will apply in the situation of theorem \ref{mainresult} to prove the ``only-if" direction. The statement is proposition \ref{prop:pobstruction}.

In all of what follows, we use the notation of \S \ref{sec:groupactions}: $A$ is a commutative, unital ring endowed with a faithful action of a finite group $G$; and if $\mfp$ is a prime ideal of $A^G$, then $C_\mfp^{hs}$ is the strict henselization of $A^G$ at $\mfp$, and $A_\mfp^{hs}$ is 
\[
A\otimes_{A^G}C_\mfp^{hs},
\]
with $G$ acting through its action on $A$ (and trivially on $C_\mfp^{hs}$).

\begin{theorem}\label{m}
Assume that $A^G$ is noetherian. Then the following assertions are equivalent:
\begin{enumerate}
\item $A^G$ is Cohen-Macaulay.
\item For every prime ideal $\mathfrak{p}$ of $A^G$, and for every prime ideal $\mathfrak{Q}$ of $A^{hs}_{\mathfrak{p}}$ lying over $\mfp C_\mathfrak{p}^{hs}$ and pulling back to a prime $\mathfrak{P}$ of $A$ lying over $\mathfrak{p}$, 
\[
{{(A^{hs}_{\mathfrak{p}})_\mathfrak{Q}}}^{I_G(\mathfrak{P})}
\]
is Cohen-Macaulay.
\item For every maximal ideal $\mathfrak{p}$ of $A^G$, there is some prime ideal $\mathfrak{Q}$ of $A^{hs}_{\mathfrak{p}}$ lying over $\mfp C_\mathfrak{p}^{hs}$ and pulling back to a prime $\mathfrak{P}$ of $A$ lying over $\mathfrak{p}$, such that 
\[
{{(A^{hs}_{\mathfrak{p}})_\mathfrak{Q}}}^{I_G(\mathfrak{P})}
\]
is Cohen-Macaulay.
\end{enumerate} 
\end{theorem}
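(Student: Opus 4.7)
The plan is to use Lemma \ref{ringslice} to collapse conditions (2) and (3) into a single assertion about the Cohen-Macaulayness of the strict henselizations $C^{hs}_\mfp$ themselves, and then to match this assertion up with condition (1) by applying the flat extension criterion for Cohen-Macaulayness reviewed in \S\ref{sec:CMness}.

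First I would apply Lemma \ref{ringslice}: for every prime $\mfp$ of $A^G$, every prime $\mfP$ of $A$ over $\mfp$, and every prime $\mfQ$ of $A^{hs}_\mfp$ over $\mfp C^{hs}_\mfp$ pulling back to $\mfP$, there is an isomorphism $(A^{hs}_\mfp)_\mfQ^{I_G(\mfP)}\cong C^{hs}_\mfp$. Since this isomorphism class is independent of the choices of $\mfP$ and $\mfQ$, both the ``for all $\mfQ$'' quantifier in (2) and the ``exists $\mfQ$'' quantifier in (3) collapse; and the existence of at least one such $\mfP,\mfQ$ for each $\mfp$ is automatic, because $A\to A^{hs}_\mfp$, being obtained by base change from the faithfully flat map $A^G\to C^{hs}_\mfp$, is itself faithfully flat and hence surjective on spectra. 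Consequently, (2) reduces to the statement ``$C^{hs}_\mfp$ is Cohen-Macaulay for every prime $\mfp$ of $A^G$,'' and (3) to the analogous statement quantified only over maximal $\mfp$.

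Next I would invoke the flat extension criterion from \S\ref{sec:CMness} applied to the canonical local homomorphism $(A^G)_\mfp \to C^{hs}_\mfp$. By Lemma \ref{lem:noetherian} together with the properties of strict henselization recalled in \S\ref{sec:groupactions}, this is a faithfully flat local homomorphism of noetherian local rings, and its closed fiber $C^{hs}_\mfp/\mfp C^{hs}_\mfp$ is zero-dimensional (in fact a separably closed field), hence automatically Cohen-Macaulay. Applied at the unique maximal of $C^{hs}_\mfp$, the criterion yields the equivalence: $C^{hs}_\mfp$ is Cohen-Macaulay if and only if $(A^G)_\mfp$ is. Since $A^G$ is assumed noetherian, condition (1) is equivalent to $(A^G)_\mfp$ being Cohen-Macaulay at every maximal, equivalently every prime, $\mfp$ of $A^G$. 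Chaining together these equivalences gives (1)$\Leftrightarrow$(2)$\Leftrightarrow$(3).

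The substantive content is already packed into Lemma \ref{ringslice}, so I expect no real obstacle beyond careful bookkeeping with the quantifiers on $\mfp$, $\mfP$, and $\mfQ$, and verifying that the flat extension criterion is being applied with its hypotheses satisfied. The most delicate point is ensuring that ``for every prime'' in (2) and ``for every maximal'' in (3) translate correctly through the chain of equivalences; this works cleanly because Cohen-Macaulayness of a noetherian ring can be tested either at all primes or only at all maximals, and because the fiber $C^{hs}_\mfp/\mfp C^{hs}_\mfp$ is unconditionally Cohen-Macaulay so no extra hypothesis is needed.
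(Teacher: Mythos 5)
Your proof is correct and follows essentially the same route as the paper's: invoke Lemma \ref{ringslice} to identify each ring $(A^{hs}_\mfp)_\mfQ^{I_G(\mfP)}$ with $C^{hs}_\mfp$ (so the conditions in (2) and (3) collapse to Cohen-Macaulayness of $C^{hs}_\mfp$), then apply the flat extension criterion of \cite[Theorem 2.1.7]{brunsherzog} to the faithfully flat local map $(A^G)_\mfp \to C^{hs}_\mfp$, whose closed fiber is a field and hence automatically Cohen-Macaulay. The only notable difference is that you frame the argument as a single chain of equivalences while the paper splits it into (2)$\Rightarrow$(3), (3)$\Rightarrow$(1), and (1)$\Rightarrow$(2); this is a purely organizational difference.
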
 

\begin{proof}
Clearly (2)$\Rightarrow$(3). We will show that (3)$\Rightarrow$(1) and (1)$\Rightarrow$(2).

(3)$\Rightarrow$(1): Lemma \ref{ringslice} states that for each maximal ideal $\mfp$ of $A^G$ and for any choice of $\mfP,\mfQ$ as in (3), 
\[
C_\mfp^{hs}\cong (A_\mfp^{hs})_\mfQ^{I_G(\mfP)}.
\]
Thus (3) implies that for each $\mfp$, $C_\mfp^{hs}$ is Cohen-Macaulay. The homomorphism of local noetherian rings
\[
(A^G)_\mfp \rightarrow C_\mfp^{hs}
\]
is flat, so by the result \cite[Theorem 2.1.7]{brunsherzog} quoted in \S \ref{sec:CMness}, Cohen-Macaulayness of $C_\mfp^{hs}$ is equivalent to that of $(A^G)_\mfp$ plus that of $C_\mfp^{hs}/\mfp C_\mfp^{hs}$. In particular, since $C_\mfp^{hs}$ is Cohen-Macaulay, so is $(A^G)_\mfp$. Since this holds for all maximal ideals $\mfp$ of $A^G$, $A^G$ is Cohen-Macaulay.

(1)$\Rightarrow$(2) Suppose $A^G$ is Cohen-Macaulay. Let $\mfp$ be any prime ideal of $A^G$. It suffices to prove that $C_\mfp^{hs}$ is Cohen-Macaulay, since by lemma \ref{ringslice}, for any $\mfP,\mfQ$ as in (2), we have
\[
C_\mfp^{hs}\cong (A_\mfp^{hs})_\mfQ^{I_G(\mfP)}.
\]
Since $(A^G)_\mfp \rightarrow C_\mfp^{hs}$ is flat, we again have by \cite[Theorem 2.1.7]{brunsherzog} that the Cohen-Macaulayness of $C_\mfp^{hs}$ is equivalent to that of $(A^G)_\mfp$ plus that of $C_\mfp^{hs}/\mfp C_\mfp^{hs}$. The former ring is Cohen-Macaulay since $A^G$ is, by the hypothesis (1), and the latter is Cohen-Macaulay since it is a field (cf. \S \ref{sec:CMness}), namely, the residue field of the local ring $C_\mfp^{hs}$.
\end{proof}

Theorem \ref{m} allows us to test Cohen-Macaulayness of an invariant ring $A^G$ locally, prime-by-prime, in terms of  the local ring $(A_\mfp^{hs})_\mfQ$ and the local group action $I_G(\mfP)$. For the application we have in mind in \S \ref{perminvars}, we will need to carry information about $A$ and $G$ to $(A_\mfp^{hs})_\mfQ$ and $I_G(\mfP)$, so we enunciate a few more lemmas to accomplish this:

\begin{lemma}\label{lem:AshisCM}
If $A$ is Cohen-Macaulay, then for any prime ideal $\mfp$ of $A^G$, $A_\mfp^{hs}$ is Cohen-Macaulay.
\end{lemma}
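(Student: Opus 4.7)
The plan is to reduce, via the product decomposition of $A_\mfp^{hs}$ given by the isomorphism $\phi$ in \eqref{iso}, to the standard fact that the strict henselization of a Cohen-Macaulay noetherian local ring is again Cohen-Macaulay.

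First, by Lemma \ref{lem:noetherian}, $A_\mfp^{hs}$ is noetherian, so Cohen-Macaulayness is meaningful. Using $\phi$, we have $A_\mfp^{hs} \cong \prod_{j=1}^s (A_\mfp^{hs})_{\mfM_j}$. A finite product of noetherian rings is Cohen-Macaulay if and only if each factor is, since every prime of a product $R_1 \times \cdots \times R_s$ has the form $R_1 \times \cdots \times \mfq_i \times \cdots \times R_s$ for $\mfq_i$ a prime of some $R_i$, and the localization of the product at such a prime agrees with the localization $(R_i)_{\mfq_i}$. Hence it suffices to show each $(A_\mfp^{hs})_{\mfM_j}$ is Cohen-Macaulay. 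Moreover, since $G$ acts transitively on $\mfM_1, \ldots, \mfM_s$ by ring automorphisms of $A_\mfp^{hs}$, these local rings are pairwise isomorphic, so I would reduce further to showing just one of them---say $(A_\mfp^{hs})_{\mfM_1}$---is Cohen-Macaulay.

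The argument in the proof of Lemma \ref{lem:noetherian} yields an isomorphism $(A_\mfp^{hs})_{\mfM_1} \cong (A_\mfP)^{hs}$ for some prime $\mfP$ of $A$ over $\mfp$. Since $A$ is Cohen-Macaulay, so is the localization $A_\mfP$. The remaining ingredient is that strict henselization preserves Cohen-Macaulayness of a noetherian local ring: the extension $A_\mfP \to (A_\mfP)^{hs}$ is faithfully flat with geometrically regular---hence Cohen-Macaulay---fibers (cf.\ \cite[\S 18.8]{grothendieck}), so the Bruns-Herzog criterion \cite[Theorem 2.1.7]{brunsherzog} recalled in \S \ref{sec:CMness} delivers Cohen-Macaulayness of $(A_\mfP)^{hs}$, which is what we need.

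The main obstacle is justifying this last step. Once one grants (or cites) that strict henselization of a Cohen-Macaulay noetherian local ring is Cohen-Macaulay---a standard fact whose cleanest proof uses geometric regularity of the fibers of $R \to R^{hs}$---the rest of the argument is an assembly of material already established in \S \ref{sec:groupactions}.
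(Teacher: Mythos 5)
Your proposal is correct, but it is packaged differently from the paper's proof. The paper does not reduce via the product decomposition \eqref{iso} or the transitivity of $G$ on the maximal ideals; instead, it takes an arbitrary maximal ideal $\mfQ$ of $A_\mfp^{hs}$ with contraction $\mfP$ in $A$, shows directly that the map $A_\mfP\rightarrow (A_\mfp^{hs})_\mfQ$ is flat (by base-changing $A^G\rightarrow C_\mfp^{hs}$ along $A^G\rightarrow A_\mfP$ and then localizing), and applies \cite[Theorem 2.1.7]{brunsherzog} to the pair $A_\mfP\rightarrow (A_\mfp^{hs})_\mfQ$. The closed-fiber Cohen-Macaulayness is established by noting that this map has relative dimension zero (it is a localization of the base change of the dimension-zero strict henselization $(A^G)_\mfp\rightarrow C_\mfp^{hs}$), so the fiber is artinian local, hence Cohen-Macaulay --- a lighter-weight fact than geometric regularity. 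Your route instead reuses the identification $(A_\mfp^{hs})_{\mfM_j}\cong (A_\mfP)^{hs}$ established inside the proof of Lemma \ref{lem:noetherian} and invokes the standard preservation of Cohen-Macaulayness under strict henselization of a noetherian local ring as a black box. Both arguments bottom out at \cite[Theorem 2.1.7]{brunsherzog} with the same flat map $A_\mfP\rightarrow (A_\mfp^{hs})_\mfQ$; yours leans more on external citations, while the paper's keeps the fiber argument elementary. One small point: Lemma \ref{lem:noetherian} only furnishes the isomorphism $(A_\mfp^{hs})_{\mfM_j}\cong (A_\mfP)^{hs}$ for \emph{some} $j$, not for $j=1$ specifically, though your reduction via the transitive $G$-action makes this harmless.
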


\begin{proof}
Suppose $A$ is Cohen-Macaulay, thus noetherian, and $\mfp$ is a prime of $A^G$. By lemma \ref{lem:noetherian}, $A_\mfp^{hs}$ is noetherian.

Let $\mfQ$ be any maximal ideal of $A_\mfp^{hs}$ and let $\mfP$ be its contraction in $A$. (Note that $\mfQ$ lies over $\mfp C_\mfp^{hs}$, per section \ref{sec:groupactions}, and therefore $\mfP$ lies over $\mfp$.) Now
\[
A^G\rightarrow (A^G)_\mfp \rightarrow C_\mfp^{hs}
\]
is a flat map. Therefore, base changing by $A^G\rightarrow A_\mfP$,
\[
A_\mfP\rightarrow A_\mfP\otimes_{A^G} C_\mfp^{hs} = A_\mfP \otimes_A A_\mfp^{hs}
\]
is also a flat map. Since $\mfQ\triangleleft A_\mfp^{hs}$ pulls back to $\mfP$ in $A$, $(A_\mfp^{hs})_\mfQ$ is a localization of $A_\mfP \otimes_A A_\mfp^{hs}$; thus
\[
A_\mfP\rightarrow (A_\mfp^{hs})_\mfQ
\]
is also flat. Therefore, again by \cite[Theorem 2.1.7]{brunsherzog} discussed in \S \ref{sec:CMness}, Cohen-Macaulayness of $(A_\mfp^{hs})_\mfQ$ is equivalent to that of $A_\mfP$ plus that of $(A^{hs}_\mfp)_\mfQ / \mfP (A^{hs}_\mfp)_\mfQ$. The former is Cohen-Macaulay since $A$ is, while the latter is Cohen-Macaulay since it is an artinian local ring (cf. \S \ref{sec:CMness}), which in turn is because $A_\mfP\rightarrow (A_\mfp^{hs})_\mfQ$ is of relative dimension zero. This itself is because this map is a localization of the base change $A_\mfP\otimes_{(A^G)_\mfp}-$ of the map $(A^G)_\mfp\rightarrow C_\mfp^{hs}$, which is flat of relative dimension zero because it is a strict henselization (\cite[Proposition 18.8.8(iii)]{grothendieck}).
\end{proof}

%{\color{red} Ok I added a proof I think the extra arguments with noetherianity is not needed I wanted to convinced me and you it was ok because Cohen-Macaulayness for general ring contains noetherianity }   {\color{blue} I think it should work! Last night I forgot that the (not necessarily flat) base change of a flat morphism is always flat, so I was trying to apply the theorem with $k[\mathbf{x}]^G$ as Bruns and Herzog's $k[\mathbf{x}]$, which doesn't work.}  
%We have already proven that $I_{\mathfrak{P}}= I_{\mathfrak{Q}}$. Now, we want to prove that $\mathfrak{Q}$ and $\mathfrak{P}$ have same height. For this, we prove that $(R^{hs}_{\mathfrak{p}})_\mathfrak{Q}$ and $k[\mathbf{x}]_{\mathfrak{P}}$ have same dimension. We have that $C_\mathfrak{p}^{hs}$ is by definition the inductive limit of etale $(R^G)_{\mathfrak{p}}$-algebras, thus $(R^{hs}_{\mathfrak{p}})_\mathfrak{Q}$ is the inductive limit of etales $k[\mathbf{x}]_{\mathfrak{p}}$-algebras. Moreover, etale morphisms preserve pointwise dimension (see for instance stack project Tag 04N4), thus the dimension stays the same for each member of the inductive limit proving that $(R^{hs}_{\mathfrak{p}})_\mathfrak{Q}$ and $k[\mathbf{x}]_{\mathfrak{P}}$ have same dimension.

%{\color{blue} Okay. Also, don't we need to know that a bireflection on $k[\mathbf{x}]$ is also a bireflection on $k[\mathbf{x}]_\mfp^{hs}$? Is this obvious?}{\color{red} We should use Etale cohomology of Milne has reference instead of Stack project as Stack project is always a mess? Then this becomes clear.} 

For a natural number $t$, an element $g\in G$ is called a \textbf{$t$-reflection} if the ideal generated by $(g-1)A$ in $A$ is contained in a prime of height $\leq t$.  A prime $\mfP$ contains $(g-1)A$ if and only if $g\in I_\mfP(A)$, so another way to say this is that $g$ is a $t$-reflection if it is in the inertia group of some prime of height $\leq t$.

In the geometric situation (where $A$ is a finitely generated algebra over a field), the ideal generated by $(g-1)A$ corresponds to the fixed point locus of $g$, so this definition makes a group element a $t$-reflection if this fixed point locus has codimension at most $t$. Thus if $G$ is a linear group acting on the coordinate ring of affine space, a $1$-reflection is either the identity or a reflection in the classical sense. A $2$-reflection has a fixed point locus of codimension 0, 1, or 2. In particular, if $G$ acts by permutations of a basis, then the $2$-reflections are exactly the identity, the transpositions, the double transpositions, and the 3-cycles.

\begin{lemma}\label{lem:treflectionlocal}
If an element $g\in I_G(\mfP)$ acts as a $t$-reflection on $A_\mfP$, then it acts as a $t$-reflection on $A$.
\end{lemma}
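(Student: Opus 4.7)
The plan is to unpack the definition of $t$-reflection in the localized ring $A_\mfP$ and then translate the relevant ideal back to $A$ via the standard bijection between primes of $A_\mfP$ and primes of $A$ contained in $\mfP$. Since $g$ is assumed to act as a $t$-reflection on $A_\mfP$, by definition there is a prime $\mathfrak{Q}\triangleleft A_\mfP$ with $\height \mathfrak{Q}\leq t$ containing $(g-1)A_\mfP$. The goal is to produce a prime of $A$ of height $\leq t$ containing $(g-1)A$.

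First I would invoke the localization-prime correspondence: $\mathfrak{Q} = \mathfrak{q} A_\mfP$ for a unique prime $\mathfrak{q}$ of $A$ contained in $\mfP$. This bijection is inclusion-preserving, so any chain of primes below $\mathfrak{q}$ in $A$ pulls through to a chain of primes below $\mathfrak{Q}$ in $A_\mfP$ (and vice versa). Consequently $\height \mathfrak{q} = \height \mathfrak{Q}\leq t$.

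Next I would check that $(g-1)A\subset \mathfrak{q}$. Take any $a\in A$. The hypothesis gives $(g-1)a/1 \in \mathfrak{q}A_\mfP$, so we may write $(g-1)a/1 = b/s$ with $b\in\mathfrak{q}$ and $s\in A\setminus\mfP$. Clearing denominators, there exists $s'\in A\setminus\mfP$ with $s's\,(g-1)a = s'b\in\mathfrak{q}$. Because $\mathfrak{q}\subset\mfP$, neither $s$ nor $s'$ lies in $\mathfrak{q}$, and the primality of $\mathfrak{q}$ forces $(g-1)a\in\mathfrak{q}$. Thus $(g-1)A\subset\mathfrak{q}$, which is a prime of $A$ of height $\leq t$, so $g$ is a $t$-reflection on $A$.

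There is no substantive obstacle here; the argument is essentially a definition chase combined with the elementary behavior of primes under localization. The only step that deserves a sentence of justification is the height equality $\height\mathfrak{q}=\height\mathfrak{Q}$, which follows immediately from the order-isomorphism between $\operatorname{Spec}(A_\mfP)$ and $\{\mathfrak{r}\in\operatorname{Spec}(A) : \mathfrak{r}\subset\mfP\}$.
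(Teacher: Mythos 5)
Your proof is correct and follows essentially the same route as the paper's: invoke the inclusion-preserving bijection between primes of $A_\mfP$ and primes of $A$ contained in $\mfP$, note it preserves heights, and check that $(g-1)A$ lands in the corresponding prime of $A$. The paper gets the last step a bit more cheaply by observing that the prime of $A$ is the \emph{contraction} of $\mathfrak{Q}$ along the localization map $\lambda\colon A\to A_\mfP$, so that $\lambda\bigl((g-1)a\bigr)\in\mathfrak{Q}$ immediately gives $(g-1)a\in\lambda^{-1}(\mathfrak{Q})=\mathfrak{q}$ with no denominator-clearing needed; but your fraction argument is valid.
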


\begin{proof}
Since $g\in I_G(\mfP)$, we have $(g-1)A\subset \mfP$. The primes of $A$ contained in $\mfP$ are in containment-preserving bijection with the primes of $A_\mfP$, with the bijection given by extension along the canonical localization map, and $(g-1)A_\mfP$ is the extension of $(g-1)A$ along this map. Thus if a prime of height $t$ in $A_\mfP$ contains $(g-1)A_\mfP$, then its pullback in $A$ is also of height $t$ and contains $(g-1)A$.
\end{proof}

\begin{lemma}\label{lem:treflectionsh}
If $A$ is noetherian, and $g\in I_G(\mfP)=I_G(\mfQ)$ acts as a $t$-reflection on $A_\mfp^{hs}$, then it acts as a $t$-reflection on $A$.
\end{lemma}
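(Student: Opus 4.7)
The plan is to exploit the flatness of the map $A \to A_\mfp^{hs}$ to transfer a height-controlled prime back from $A_\mfp^{hs}$ to $A$. Since $A_\mfp^{hs} = A \otimes_{A^G} C_\mfp^{hs}$ is obtained by base change from $A^G \to C_\mfp^{hs}$, which is flat (as a strict henselization of a localization of $A^G$), the map $A \to A_\mfp^{hs}$ is itself flat.

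Starting from the hypothesis that $g$ acts as a $t$-reflection on $A_\mfp^{hs}$, I would pick a prime $\mathfrak{R}$ of $A_\mfp^{hs}$ of height $\leq t$ containing $(g-1)A_\mfp^{hs}$, and take its contraction $\mathfrak{r} := \mathfrak{R} \cap A$. The inclusion $(g-1)A \subset \mathfrak{r}$ is immediate, since $(g-1)A$ maps into $(g-1)A_\mfp^{hs} \subset \mathfrak{R}$ under $A \to A_\mfp^{hs}$, and $\mathfrak{r}$ is by definition the preimage of $\mathfrak{R}$.

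The substantive step, and the only place to worry, is the height bound $\height(\mathfrak{r}) \leq t$. For this I would invoke the going-down property of flat ring maps: lifting any saturated chain $\mathfrak{r}_0 \subsetneq \cdots \subsetneq \mathfrak{r}_h = \mathfrak{r}$ in $A$ (where $h = \height(\mathfrak{r})$) produces a chain $\mathfrak{R}_0 \subsetneq \cdots \subsetneq \mathfrak{R}_h = \mathfrak{R}$ in $A_\mfp^{hs}$, whose terms remain distinct because they contract to distinct primes. Hence $\height(\mathfrak{r}) \leq \height(\mathfrak{R}) \leq t$, and combined with $(g-1)A \subset \mathfrak{r}$ this shows that $g$ is a $t$-reflection on $A$.

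I do not anticipate a genuine obstacle here. The noetherian hypothesis on $A$ does not appear strictly necessary for this flatness-based argument, but it sits naturally in the statement since Lemma \ref{lem:noetherian} guarantees that $A_\mfp^{hs}$ is noetherian, so heights are well-controlled on both sides.
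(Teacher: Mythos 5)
Your proof is correct and follows essentially the same route as the paper's: choose a prime of $A_\mfp^{hs}$ of height $\leq t$ containing $(g-1)A_\mfp^{hs}$, contract it to $A$, note the contraction still contains $(g-1)A$, and use going-down for the flat map $A \rightarrow A_\mfp^{hs}$ to bound the height of the contraction. Your side remark that noetherianity is not really needed for the flatness/going-down step is also accurate; the paper's hypothesis is there mainly so that Lemma~\ref{lem:noetherian} applies and the cited noetherian formulation of going-down is available.
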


\begin{proof}
If $g$ is a $t$-reflection on $A_\mfp^{hs}$, then there is a prime ideal $\mathfrak{S}$ of $A_\mfp^{hs}$ of height $\leq t$ and containing $(g-1)A_\mfp^{hs}$. Let $\mathfrak{R}$ be $\mathfrak{S}$'s pullback in $A$. Then $\mathfrak{R}$ contains $(g-1)A$. Since by section \ref{sec:groupactions} and lemma \ref{lem:noetherian},
\[
A\rightarrow A_\mfp^{hs}
\]
is a flat extension of noetherian rings, going-down applies (\cite[Lemma 10.11]{eisenbud}), so that the height of $\mathfrak{S}$ is at least that of $\mathfrak{R}$. In particular, the height of $\mathfrak{R}$ is $\leq t$, so that $g$ is a $t$-reflection on $A$.
\end{proof}

We will also need to take an element of $G$ acting on $A$ but not as a $t$-reflection, and conclude that it does not act on a certain subring as a $t$-reflection either:

\begin{lemma}\label{lem:treflectionN}
If $N$ is the normal subgroup of $G$ generated by the $t$-reflections, then no element of $G\setminus N$ acts on $A^N$ as a $t$-reflection.
\end{lemma}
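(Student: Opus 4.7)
The plan is to argue by contradiction: suppose some $g\in G\setminus N$ acts on $A^N$ as a $t$-reflection, and deduce that in fact $g\in N$. The strategy is to lift the image $\bar g=gN$ to an honest $t$-reflection of $A$ sitting inside $G$, and then invoke the defining property of $N$.

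Since $N$ is normal, the action of $G$ on $A^N$ factors through $G/N$, so the hypothesis says that $\bar g\in I_{G/N}(\mfp)$ for some prime $\mfp$ of $A^N$ of height $\leq t$. By the Cohen-Seidenberg lying-over theorem applied to the integral extension $A^N\subset A$, there is a prime $\mfP$ of $A$ with $\mfP\cap A^N=\mfp$. The key point is that $\mfP$ also has height at most $t$: by incomparability of primes over a common prime in an integral extension, any strict chain of primes in $A$ descending from $\mfP$ contracts to a strict chain in $A^N$ descending from $\mfp$, giving $\height \mfP\leq \height\mfp\leq t$.

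Now the surjectivity of the map $\psi\colon I_G(\mfP)\to I_{G/N}(\mfp)$ established in lemma \ref{lem:GNinertia} produces an element $h\in I_G(\mfP)$ with $\bar h=\bar g$ in $G/N$. Since $\height\mfP\leq t$, the element $h$ is by definition a $t$-reflection on $A$, and hence $h\in N$ by the very definition of $N$. Consequently $\bar g=\bar h=1$ in $G/N$, forcing $g\in N$, which contradicts $g\in G\setminus N$.

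I do not anticipate any serious obstacle in executing this plan; both ingredients (height preservation by incomparability, and the lift of $\bar g$ into $I_G(\mfP)$) are essentially formal consequences of the material in \S\ref{sec:groupactions}. The one subtlety to keep track of is the equivalence between the $(g-1)A$-formulation of the notion of $t$-reflection and its reformulation as membership in an inertia group of some prime of height $\leq t$, which is noted in the paper just above the lemma and makes the bridge from lemma \ref{lem:GNinertia} to the desired conclusion immediate.
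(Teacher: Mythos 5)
Your proof is correct and follows essentially the same route as the paper's: lift to a prime $\mfP$ of $A$ over $\mfp$, use the surjectivity from lemma \ref{lem:GNinertia} to produce an element of $I_G(\mfP)$ mapping to $\bar g$, observe it is a $t$-reflection and hence lies in $N$, and conclude $g\in N$. The only small difference is that you derive $\height\mfP\leq\height\mfp$ directly from incomparability in an integral extension, whereas the paper cites the height-equality result of Gordeev--Kemper; both suffice, and yours is the more self-contained way to get the needed bound.
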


\begin{remark}
This lemma does not require a noetherian hypothesis on $A$.
\end{remark}

\begin{proof}
Let $g\in G$. We will show that if its image $\overline{g}\in G/N$ acts on $A^N$ as a $t$-reflection, then actually $g\in N$.

If $\overline{g}$ acts on $A^N$ as a $t$-reflection, then there is a prime $\mfp$ of $A^N$ of height $\leq t$ with $\overline{g}\in I_{G/N}(\mfp)$. Let $\mfP$ be any prime of $A$ lying over $\mfp$. The height of $\mfP$ is equal to that of $\mfp$ (e.g. by \cite[Lemma 5.3]{gordeevkemper}, which is stated for noetherian $A$ but the argument holds in general); in particular it is $\leq t$. By lemma \ref{lem:GNinertia}, we have
\[
I_{G/N}(\mfp) = I_G(\mfP)/I_N(\mfP).
\]
In particular, $I_G(\mfP)$ surjects onto $I_{G/N}(\mfp)$, so there is an element $g'\in I_G(\mfP)$ whose image in $G/N$ is $\overline{g}$. Since $\mfP$ has height $\leq t$, $g'$ is a $t$-reflection, so it is contained in $N$ by construction. Then its image $\overline{g}$ must actually be the identity. So $g$ (with the same image) lies in the kernel of $G\rightarrow G/N$, i.e. $g\in N$.
\end{proof}

The following lemma allows us to detect a failure of Cohen-Macaulayness locally.

%It is an immediate consequence of a more general result of Gordeev and Kemper in \cite{gordeevkemper}:

\begin{lemma}\label{lem:GKLP}
Let $A$ be a ring containing the prime field $\FF_p$, and let $G$ be a $p$-group. Suppose that $A$ is Cohen-Macaulay, $A^G$ is noetherian, and $A$ is finite over $A^G$. Further, suppose there is a prime ideal $\mfP$ of $A$ such that $G=I_G(\mfP)$. Then $A^G$ is not Cohen-Macaulay unless $G$ is generated by its $2$-reflections.
\end{lemma}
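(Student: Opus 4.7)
The plan is to prove the contrapositive: assume $A^G$ is Cohen-Macaulay, and show $G$ must be generated by its $2$-reflections on $A$. The strategy is to pass via strict henselization at $\mathfrak{p} := \mathfrak{P} \cap A^G$ to a local setup where a generalization of Kemper's bireflection theorem applies, and then transfer the resulting $2$-reflections back to $A$ using lemmas \ref{lem:treflectionlocal} and \ref{lem:treflectionsh}.

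First I would choose a prime $\mathfrak{Q}$ of $A_\mathfrak{p}^{hs}$ lying over $\mathfrak{p}C_\mathfrak{p}^{hs}$ and pulling back to $\mathfrak{P}$ in $A$. By lemma \ref{lem:basechangepreservesinertia}, $I_G(\mathfrak{Q}) = I_G(\mathfrak{P}) = G$. Set $R := (A_\mathfrak{p}^{hs})_\mathfrak{Q}$. By theorem \ref{m} combined with lemma \ref{ringslice}, the Cohen-Macaulayness of $A^G$ implies that $R^G \cong C_\mathfrak{p}^{hs}$ is Cohen-Macaulay. I then need to verify that $R$ is a noetherian local Cohen-Macaulay ring (via lemmas \ref{lem:AshisCM} and \ref{lem:noetherian} plus the stability of Cohen-Macaulayness and noetherianity under localization), that it contains $\FF_p$, that its residue field is separably closed (since $R$ sits over the strictly henselian $C_\mathfrak{p}^{hs}$, whose residue field is separably closed), and that $R$ is finite over $R^G$: indeed, since $A$ is finite over $A^G$, the base change $A_\mathfrak{p}^{hs}$ is finite over $C_\mathfrak{p}^{hs}$, and $R$ is a direct factor of $A_\mathfrak{p}^{hs}$ via the decomposition \eqref{iso}.

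Next, I would invoke the generalization of Kemper's 1999 bireflection theorem developed by Gordeev-Kemper \cite{gordeevkemper} and Lorenz-Pathak \cite{lorenzpathak}: for a $p$-group $G$ acting faithfully on a noetherian local Cohen-Macaulay ring $R$ containing $\FF_p$, with separably closed residue field and finite over $R^G$, Cohen-Macaulayness of $R^G$ forces $G$ to be generated by $2$-reflections on $R$. Finally, I would transfer back: applying lemma \ref{lem:treflectionlocal} with $(A_\mathfrak{p}^{hs}, \mathfrak{Q})$ playing the role of $(A, \mathfrak{P})$, each such generator is a $2$-reflection on $A_\mathfrak{p}^{hs}$; then lemma \ref{lem:treflectionsh} promotes it to a $2$-reflection on $A$. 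Hence $G$ is generated by its $2$-reflections on $A$, which is the desired conclusion.

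The main obstacle is identifying and correctly invoking the right generalization of Kemper's bireflection theorem: his original statement concerns polynomial rings, but the passage to strict henselization leaves us instead with a general noetherian local Cohen-Macaulay ring with separably closed residue field, which is precisely the setting addressed by Gordeev-Kemper and Lorenz-Pathak. The hypothesis $G = I_G(\mathfrak{P})$ is crucial at this step, since it guarantees that the entire group $G$ (rather than merely a proper inertia subgroup) is still acting on the localization $R$, so that the conclusion about generators applies to all of $G$.
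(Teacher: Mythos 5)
Your approach has a genuine gap at its central step: the invocation of a ``generalization of Kemper's bireflection theorem'' from \cite{gordeevkemper} and \cite{lorenzpathak} that would apply to the noetherian local ring $R = (A_\mfp^{hs})_\mfQ$. The relevant result in \cite{gordeevkemper} (their Theorem~5.5) does not apply to $R$: a key step of its proof requires the rings involved to be \emph{normal} rings that are \emph{localizations of algebras finitely generated over a field}. Strict henselizations are inductive limits of \'etale extensions, not finitely generated, so $R$ does not satisfy this hypothesis. This is precisely the obstruction the paper flags in the remark immediately after the lemma, and it is the reason the paper supplies an independent proof rather than citing Gordeev--Kemper. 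The Lorenz--Pathak result you allude to (\cite[Corollary~4.3]{lorenzpathak}) is also not a bireflection theorem: it asserts that if $A$ is noetherian as an $A^G$-module, contains $\FF_p$, and both $A$ and $A^G$ are Cohen--Macaulay, then the relative trace $\Tr_{G/N}\colon A^N\to A^G$ is surjective (where $N\triangleleft G$ is any normal subgroup of index a power of $p$). It does not by itself say anything about generation by $2$-reflections. Your proposal thus rests on a theorem that does not exist in the form and generality you need.

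The paper's actual proof is much more direct and does not pass to strict henselization at all. It takes $N$ to be the subgroup of $G$ generated by $2$-reflections, applies Lorenz--Pathak's Corollary~4.3 to conclude that if $A^G$ were Cohen--Macaulay the relative trace $\Tr_{G/N}\colon A^N\to A^G$ would be surjective, and then uses the hypothesis $G = I_G(\mfP)$ to get a contradiction: writing $1 = \sum_{g\in G/N} gx$ and reducing mod $\mfP$, every $gx$ becomes $x$, so $1 \equiv [G:N]\,x \pmod{\mfP}$, which vanishes in characteristic $p$ unless $N=G$. The hypothesis $G = I_G(\mfP)$ is the engine of this calculation; it is not used merely to ensure $G$ survives localization, as your proposal suggests. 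In short: the henselization machinery belongs in the proof of Proposition~\ref{prop:pobstruction}, where Lemma~\ref{lem:GKLP} is \emph{applied}, not inside the proof of Lemma~\ref{lem:GKLP} itself. You should remove the detour through $R$, identify $N$ as the subgroup generated by $2$-reflections, invoke Lorenz--Pathak's relative-trace surjectivity, and close with the mod-$\mfP$ computation.
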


\begin{remark}
This statement is closely related to \cite[Theorem 5.5]{gordeevkemper}, which also applies to non-$p$-groups and gives some control over how far $A^G$ can be from Cohen-Macaulay. However, a key step in the proof of that result requires the rings to be normal rings that are localizations of algebras finitely generated over fields. As our application will be to rings that do not fulfill this hypothesis, we give an independent proof.
\end{remark}

\begin{proof}[Proof of lemma \ref{lem:GKLP}]
Let $N$ be the normal subgroup of $G$ generated by the $2$-reflections. 

Since $A$ is finite over the noetherian ring $A^G$, it is noetherian as an $A^G$-module. Since it also contains $\FF_p$, \cite[Corollary 4.3]{lorenzpathak} applies, which, when specialized to the situation that $G$ is a $p$-group, states that if both $A$ and $A^G$ are Cohen-Macaculay, then the map
\[
\Tr_{G/N}: A^N \rightarrow A^G
\]
given by
\[
x\mapsto \sum_{g\in G/N} gx
\]
is surjective onto $A^G$, where we think of each $g$ as an element of $G$ and the sum is taken over coset representatives of $N$.

We will show that this map cannot be surjective unless $N=G$. Since $A$ is Cohen-Macaulay by assumption, this will show $A^G$ is not Cohen-Macaulay if $N\neq G$.

If $\Tr_{G/N}$ is surjective, then we have
\[
1 = \sum_{g\in G/N} gx
\]
for some $x\in A^N$. Since $G=I_G(\mfP)$, all $g\in G$ satisfy $gx = x \mod \mfP$ in $A$, thus
\[
1 = \sum_{g\in G/N} x = [G:N] x\mod \mfP
\]
in $A$. Since $G$ is a $p$-group and $A$ contains $\FF_p$, $[G:N] x = 0$ in $A$ unless $N=G$. In particular, $[G:N]x$ cannot be $1\mod\mfP$ unless $N=G$.
\end{proof}

\begin{remark}
The map $\Tr_{G/N}$ is called the {\em relative trace} or {\em relative transfer}; see remark \ref{rmk:transfer}.
\end{remark}

\begin{remark}
The proof uses the result \cite[Lemma 4.3]{lorenzpathak} of Lorenz and Pathak, which has as a hypothesis that $A$ is noetherian as an $A^G$-module; call this ($\star$). Above, we deduced ($\star$) from the assumptions that (1) $A^G$ is noetherian and (2) $A$ is finite over it. Actually, ($\star$) also implies (1) and (2), hence is equivalent to them. Since any ideal of $A^G$ is also an $A^G$-submodule of $A$ (since $A^G$ embeds in $A$), ($\star$) implies that all these ideals are finitely generated, thus (1). Meanwhile, $A$ itself is an $A^G$-submodule of $A$, so ($\star$) implies it is finitely generated as an $A^G$-module, thus (2). More generally, if a module $M$ over a ring $R$ has an injective $R$-module map from $R$, then noetherianity of $M$ as $R$-module is equivalent to noetherianity of $R$ as a ring plus finite generation of $M$ over $R$, by the same arguments.
\end{remark}

Combining all of these results, we get an obstruction to Cohen-Macaulayness for a characteristic $p$ ring expressed entirely in terms of the presence of a certain inertia group. The proof of the ``only-if" direction of theorem \ref{mainresult} will be an application of this proposition.

\begin{proposition}\label{prop:pobstruction}
Let $A$ be a ring containing $\FF_p$ and let $G$ be a finite group of automorphisms of $A$. Let $N$ be the normal subgroup of $G$ generated by the 2-reflections. Suppose that $A^N$ is Cohen-Macaulay, $A^G$ is noetherian, and $A^N$ is finite over $A^G$. If there is an inertia group for the action of $G/N$ on $A^N$ that is a nontrivial $p$-group, then $A^G$ is not Cohen-Macaulay.
\end{proposition}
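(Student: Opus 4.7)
My plan is to localize the Cohen-Macaulay question using Theorem \ref{m} and then apply the obstruction Lemma \ref{lem:GKLP} at the strict local level. Set $B = A^N$ and $H = G/N$, so that $B^H = A^G$. Let $\mfp$ be the prime of $B$ with $H_0 := I_H(\mfp)$ a nontrivial $p$-group, let $\mfp^\star = \mfp \cap B^H$, and choose a prime $\mfQ$ of $B^{hs}_{\mfp^\star} = B \otimes_{B^H} C^{hs}_{\mfp^\star}$ lying over $\mfp^\star C^{hs}_{\mfp^\star}$ and pulling back to $\mfp$. Set $R = (B^{hs}_{\mfp^\star})_\mfQ$; by Lemma \ref{lem:basechangepreservesinertia}, $I_H(\mfQ) = H_0$. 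By Theorem \ref{m} applied to $(B, H)$, it suffices to show that $R^{H_0}$ is not Cohen-Macaulay.

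To do this I will apply Lemma \ref{lem:GKLP} to the action of $H_0$ on $R$. The hypotheses check out cleanly: $R$ contains $\FF_p$, $H_0$ is a nontrivial $p$-group, and $R$ is Cohen-Macaulay by Lemma \ref{lem:AshisCM} applied to $B$, followed by localization. Raynaud's Lemma \ref{ringslice} identifies $R^{H_0} \cong C^{hs}_{\mfp^\star}$, which is noetherian by \cite[Proposition 18.8.8(iv)]{grothendieck}; the finiteness of $B$ over $B^H$ base-changes to give $B^{hs}_{\mfp^\star}$ finite over $C^{hs}_{\mfp^\star}$, and since $R$ is a factor in the product decomposition \eqref{iso}, it too is finite over $C^{hs}_{\mfp^\star} \cong R^{H_0}$. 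Finally, taking $\mfQ R$ as the distinguished prime, $H_0 = I_{H_0}(\mfQ R)$ since $H_0$ stabilizes $\mfQ$ and acts trivially on $B^{hs}_{\mfp^\star}/\mfQ$, hence on the residue field of $R$ at $\mfQ R$.

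Lemma \ref{lem:GKLP} then yields that $R^{H_0}$ fails to be Cohen-Macaulay unless $H_0$ is generated by 2-reflections acting on $R$. I would rule this out by contradiction: suppose $\bar g \in H_0$ acts as a 2-reflection on $R$. Then by Lemma \ref{lem:treflectionlocal} applied to $(B^{hs}_{\mfp^\star}, \mfQ)$, $\bar g$ is a 2-reflection on $B^{hs}_{\mfp^\star}$; by Lemma \ref{lem:treflectionsh} applied to $(B, \mfp)$, it is then a 2-reflection on $A^N$; but Lemma \ref{lem:treflectionN} (with $t = 2$) forbids any nontrivial element of $H = G/N$ from acting as a 2-reflection on $A^N$, forcing $\bar g = 1$. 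If every generator of $H_0$ is trivial in this way, then $H_0 = 1$, contradicting the hypothesis.

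The most delicate point is the descent of the 2-reflection property from $R$ down through $B^{hs}_{\mfp^\star}$ to $A^N$, which requires the inertia bookkeeping established in \S\ref{sec:groupactions} (in particular Lemma \ref{lem:basechangepreservesinertia}, which guarantees that the inertia group is preserved at each stage of the tower, so that the hypotheses of Lemmas \ref{lem:treflectionlocal} and \ref{lem:treflectionsh} are actually in force). Once this chain is set up, the contradiction is immediate, Lemma \ref{lem:GKLP} delivers the failure of Cohen-Macaulayness of $R^{H_0}$, and Theorem \ref{m} transports this failure back to $A^G$.
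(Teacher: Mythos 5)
Your proposal is correct and follows essentially the same route as the paper's proof: localize via Theorem \ref{m} and Raynaud's Lemma \ref{ringslice}, transport Cohen-Macaulayness and finiteness to the strict local setting via Lemmas \ref{lem:AshisCM} and base change, verify the hypotheses of Lemma \ref{lem:GKLP} at the local ring $R$, and rule out generation by $2$-reflections by descending through Lemmas \ref{lem:treflectionlocal}, \ref{lem:treflectionsh}, and \ref{lem:treflectionN}. The only cosmetic difference is that you phrase the $2$-reflection descent as a proof by contradiction while the paper runs the contrapositives directly; the logical content is identical.
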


\begin{proof}
%Let $H = G/N$ and let $B = A^N$. 
Note that 
\[
A^G = (A^N)^{G/N}.% = B^{H}.
\]
%Thus by theorem \ref{m}, $A^G = B^{H}$ is not Cohen-Macaulay unless for all $\mfp\triangleleft B^{H}$, $\mfP\triangleleft B$, and $\mfQ\triangleleft B_\mfp^{hs}$ as in the statement of that theorem, the ring
%\[
%\left(B_\mfp^{hs}\right)_\mfQ^{I_{H}(\mfP)}
%\]
%is Cohen-Macaulay.

Since $A^G$ is noetherian, theorem \ref{m} applies.

Suppose $\mfP$ is a prime of $A^N$ whose inertia group $I_{G/N}(\mfP)$ is a $p$-group, per the hypothesis. Let 
\[
\mfp = \mfP \cap (A^N)^{G/N},
\]
let $C_\mfp^{hs}$ be the strict henselization of $(A^G)_\mfp = ((A^N)^{G/N})_\mfp$, and let
\[
(A^N)_\mfp^{hs} = A^N \otimes_{A^G} C_\mfp^{hs},
\]
as in section \ref{sec:groupactions}.

By assumption, $A^N$ is Cohen-Macaulay. Thus $(A^N)_\mfp^{hs}$ is Cohen-Macaulay, by lemma \ref{lem:AshisCM}, and thus so is 
\[
((A^N)_\mfp^{hs})_\mfQ
\]
for any $\mfQ\triangleleft (A^N)_\mfp^{hs}$, and in particular any $\mfQ$ as described in theorem \ref{m}.

As $A^N$ is finite over the noetherian ring $A^G$ by assumption, its base change $(A^N)_\mfp^{hs}$ is finite over $C_\mfp^{hs}$, which is noetherian by \cite[Proposition 18.8.8(iv)]{grothendieck}, as discussed in section \ref{sec:groupactions}. The localization $((A^N)_\mfp^{hs})_\mfQ$ is a homomorphic image of $(A^N)_\mfp^{hs}$ by the isomorphism \eqref{iso}, so it too is finite over $C_\mfp^{hs}$.

By lemma \ref{ringslice}, $C_\mfp^{hs}$ is the invariant ring for the action of $I_{G/N}(\mfP)$ on $((A^N)_\mfp^{hs})_\mfQ$. Since $A$ contains $\FF_p$ and therefore so do $A^N$ and $((A^N)_\mfp^{hs})_\mfQ$, and since $I_{G/N}(\mfP)$ is a $p$-group that is equal to $I_{G/N}(\mfQ)$ which is an inertia group of $((A^N)_\mfp^{hs})_\mfQ$, we have now verified all the hypotheses of lemma \ref{lem:GKLP} for the action of $I_{G/N}(\mfP)$ on $((A^N)_\mfp^{hs})_\mfQ$. We can conclude from that lemma that the invariant ring cannot be Cohen-Macaulay unless $I_{G/N}(\mfP)$ is generated by 2-reflections. 

However $I_{G/N}(\mfP)$ is not so generated. By lemma \ref{lem:treflectionN}, no nontrivial element of $G/N$ acts on $A^N$ as a 2-reflection. In particular, no nontrivial element of $I_{G/N}(\mfP)$ acts on $A^N$ as a 2-reflection. Since $A^N$ is Cohen-Macaulay, it is noetherian, so lemma \ref{lem:treflectionsh} applies, and no nontrivial element of $I_{G/N}(\mfP)$ acts on $(A^N)_\mfp^{hs}$ as a 2-reflection either. By lemma \ref{lem:treflectionlocal}, the same is true for the action of $I_{G/N}(\mfP) = I_{G/N}(\mfQ)$ on
\[
((A^N)_\mfp^{hs})_\mfQ.
\]
In particular, the $p$-group $I_{G/N}(\mfP)$ is not generated by $2$-reflections on this ring, since it is nontrivial. Then lemma \ref{lem:GKLP} implies that
\[
((A^N)_\mfp^{hs})_\mfQ^{I_{G/N}(\mfP)}
\]
is not Cohen-Macaulay. Therefore, by theorem \ref{m}, neither is $(A^N)^{G/N} = A^G$.
\end{proof}

\section{Permutation invariants}\label{perminvars}

In this section we prove the two directions of theorem \ref{mainresult}. A schematic diagram of the proof is found in figure \ref{fig:schematic}.

\begin{figure}
\begin{center}
\begin{tikzpicture}

\node (Ggenby) at (0,3) {$G$ gen by 2-reflections};
\node (DeltaQuotient) at (0,6) {$\Delta / G$ satisfies \eqref{eq:homvanishing}};
\node (lange) at (0,4.5) {\scriptsize Lange's theorem (\S \ref{sec:if})};
\draw (Ggenby) -- (lange);
\draw [->] (lange) -- (DeltaQuotient);

\node (SRCM) at (5,6) {$k[\Delta / G]$ is CM};
\draw [<->] (DeltaQuotient) -- (SRCM);
\node at (2.6,6.2) {\scriptsize Reisner etc. (\S \ref{sec:CCAandPL})};

\node (k[delta]^G) at (10,6) {$k[\Delta]^G$ is CM};
\draw [<->] (SRCM) -- (k[delta]^G);
\node at (7.5, 6.2) {\scriptsize Reiner (\S \ref{sec:CCAandPL})};

\node (RGCM) at (10,3) {$k[\mathbf{x}]^G$ is CM};
\node (GSR) at (10,4.5) {\scriptsize Garsia-Stanton/Reiner (\S \ref{sec:CCAandPL})};
\draw (k[delta]^G) -- (GSR);
\draw [->] (GSR) -- (RGCM);

\node (Gnotgenby) at (0,0) {$G$ not gen by 2-reflections};
\node (pexists) at (0,-3) {$\exists p$ with $G_\pi^BN/N \cong \ZZ/p$};
\node (primestab) at (0,-1.5) {\scriptsize Lemma \ref{lem:primestabilizer} (\S \ref{sec:onlyif})};
\draw (Gnotgenby) -- (primestab);
\draw [->] (primestab) -- (pexists);

\node (Iexists) at (5,-6) {$\exists \mfP \triangleleft k[\mathbf{x}]^N$ s.t. $I_{G/N}(\mfP) \cong \ZZ/p$};
\node (inertiais) at (2.5, -4.5) {\scriptsize Lemma \ref{lem:inertiais} (\S \ref{sec:onlyif})};
\draw (pexists) -- (inertiais);
\draw [->] (inertiais) -- (Iexists);

\node (prop38) at (7.5,-3) {\scriptsize Proposition \ref{prop:pobstruction} (\S \ref{sec:inertiaCM})};
\draw (Iexists) -- (prop38);

\node (NisCM) at (5,-2) {$k[\mathbf{x}]^N$ is CM};
\draw (-1.7,1.5) -- (11,1.5);
\draw (-1.7,2) -- (-1.7,1.5);
\draw (11,2) -- (11,1.5);
\draw [->] (5,1.5) -- (NisCM);
\draw (NisCM) -- (prop38);

\node (RGnotCM) at (10,0) {If $\Char k = p$, $k[\mathbf{x}]^G$ is not CM};
\draw [->] (prop38) -- (RGnotCM);

\end{tikzpicture}
\end{center}
\caption{Schematic diagram of the proof of theorem \ref{mainresult}. Arrows are implications, and small print above or interrupting an arrow names a result needed for the implication to go through. The \S-references indicate where to look for statements and notation definitions. The top half is the ``if" direction (proposition \ref{prop:ifdirection}). The bottom half is the ``only-if" direction (proposition \ref{prop:onlyifdirection}). The group $N$ is the subgroup of $G$ generated by the 2-reflections, so the ``if" direction is required to conclude that $k[\mathbf{x}]^N$ is Cohen-Macaulay in the bottom half.}\label{fig:schematic}
\end{figure}
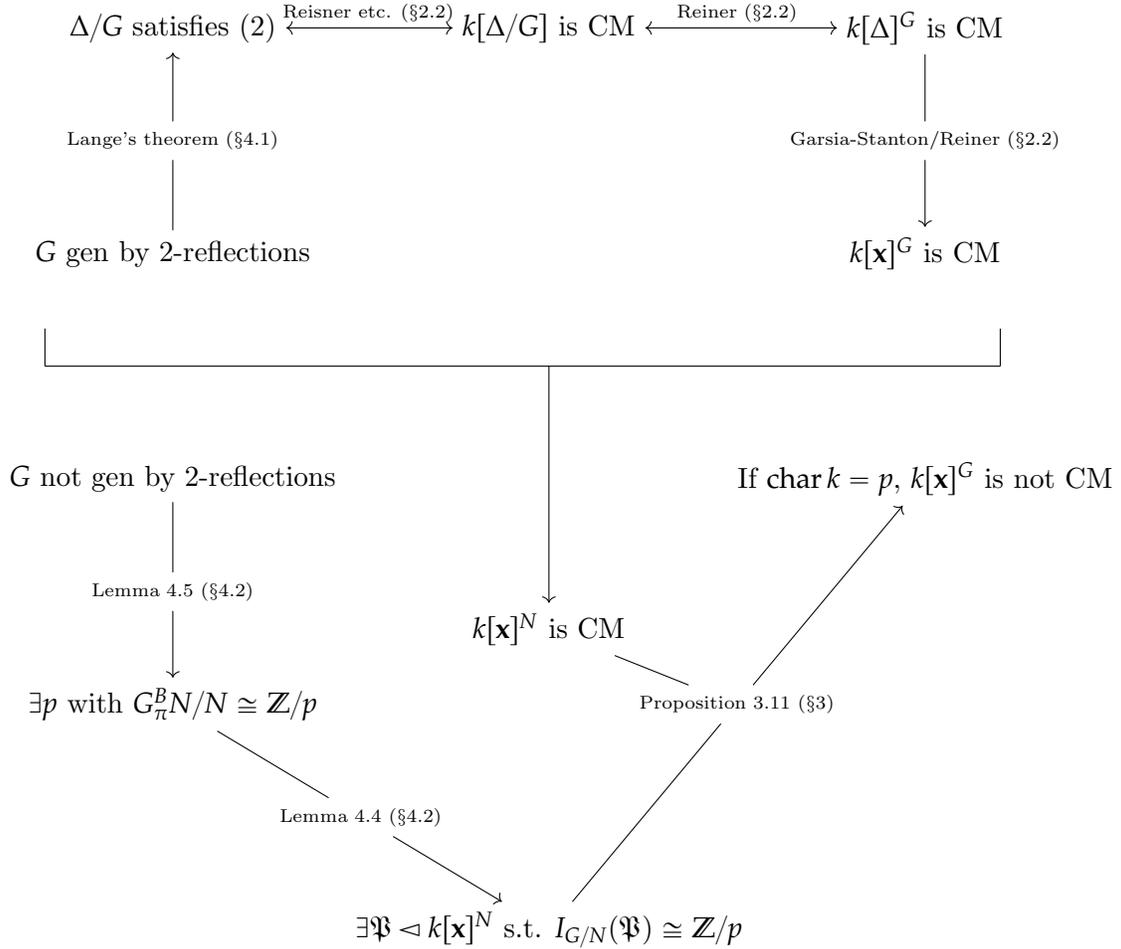

\subsection{The {\em if} direction}\label{sec:if}

In this section we prove:

\begin{proposition}\label{prop:ifdirection}
If $G$ is generated by transpositions, double transpositions, and 3-cycles, then $k[\mathbf{x}]^G$ is Cohen-Macaulay regardless of the field $k$.
\end{proposition}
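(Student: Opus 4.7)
The plan is to assemble the chain of reductions developed in \S \ref{sec:CCAandPL}, with Lange's theorem doing the real work at the end. By \cite[Theorem A.1]{hersh2}, Cohen-Macaulayness of $k[\mathbf{x}]^G$ follows from that of $k[B_n\setminus\{\emptyset\}]^G$; by Reiner's identification \eqref{eq:invarisSR} this ring equals $k[\Delta/G]$, where $\Delta$ is the order complex of $B_n\setminus\{\emptyset\}$; and by the Reisner--Munkres--Stanley--Duval criterion, the Cohen-Macaulayness of $k[\Delta/G]$ follows from the homological vanishing condition \eqref{eq:homvanishing} for $|\Delta/G|$. So the task reduces to a purely topological one: to show that $|\Delta/G|$ satisfies \eqref{eq:homvanishing}.

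To do this, I realize $|\Delta/G|$ as the quotient of a convex body by a linear action. Topologically $|\Delta|$ is the standard $(n-1)$-simplex $\{x\in\RR^n : x_i\geq 0,\ \sum x_i = 1\}$, of which $\Delta$ is the barycentric subdivision, and $S_n$ acts on it by coordinate permutation. After translating by $(-1/n,\dots,-1/n)$, the barycenter sits at the origin and $|\Delta|$ becomes a $G$-invariant polyhedral star of $0$ in the hyperplane $H = \{\sum x_i = 0\}\cong \RR^{n-1}$, on which $S_n$ acts through its standard $(n-1)$-dimensional representation. In this representation, a transposition acts as a reflection (codimension-$1$ fixed set), a double transposition acts as an involution with a codimension-$2$ fixed set, and a $3$-cycle acts as a rotation of order $3$ with a codimension-$2$ fixed set. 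Thus $G$ acts orthogonally on $H$ through a finite group generated by elements whose fixed subspaces have codimension at most $2$.

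I then invoke Lange's theorem \cite{lange2}, which asserts that under exactly this hypothesis the quotient $H/G$ is a piecewise-linear manifold. The image of $|\Delta|$ in $H/G$ is therefore a polyhedral star of the image of the origin in a PL manifold, and by the fact recalled in \S\ref{sec:CCAandPL} every such star is a PL ball; so $|\Delta/G| = |\Delta|/G$ is a ball of dimension $n-1$. Contractibility of a ball yields $\tilde H_i(|\Delta/G|;k)=0$ for all $i$, and the long exact sequence of a pair gives $H_i(|\Delta/G|, |\Delta/G|\setminus q;k) = 0$ for all $i<n-1$ and all $q\in|\Delta/G|$: at interior points the pair has the homology of $(D^{n-1},S^{n-2})$, while at boundary points $|\Delta/G|\setminus q$ is contractible. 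Hence \eqref{eq:homvanishing} holds and the proposition follows.

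The real content is concentrated in Lange's theorem; the main obstacle is simply to confirm that its hypothesis matches ours---that, in the standard representation of $S_n$, transpositions, double transpositions, and $3$-cycles are precisely the reflections and codimension-$2$ rotations produced by our generators---and that the passage from a $G$-invariant polyhedral star in $H$ to a polyhedral star in $H/G$ goes through as expected. Both are routine fixed-set computations and local-structure observations.
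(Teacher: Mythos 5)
Your proposal is correct and follows essentially the same route as the paper: reduce via Garsia--Stanton/Reiner and Reisner--Munkres--Stanley--Duval to the topological vanishing condition for $|\Delta/G|$, embed $|\Delta|$ as a polyhedral star of the origin in the sum-zero hyperplane with $G$ acting orthogonally, apply Lange's theorem to conclude the quotient is a PL manifold, and deduce $|\Delta/G|$ is a ball. The only place the paper is slightly more careful is in checking that $|\Delta/G|$ really is a \emph{polyhedral} star in $T/G$ (openness of the quotient map, PL image of a compact polyhedron is a compact polyhedron), which you defer as routine but which is worth spelling out since it is the bridge to the star-invariance fact.
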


The groundwork has been laid in \S\ref{sec:CCAandPL}. The remaining piece of the proof is supplied by a recent, beautiful result of Christian Lange, building on earlier work of Marina Mikha\^{i}lova. Let $H$ be a finite subgroup of the orthogonal group $O_d(\RR)$, acting on $\RR^d$. Endow $\RR^d$ with its standard piecewise-linear (PL) structure. The topological quotient $\RR^d/H$ carries a PL structure such that the quotient map $\RR^d\rightarrow \RR^d/H$ is a PL map, and the main result of \cite{lange2} is that it is a PL manifold (possibly with boundary) if and only if $H$ is generated by $2$-reflections. (Lange calls elements of $O_d(\RR)$ fixing a codimension-2 subspace {\em rotations} since they rotate a plane and fix its orthogonal complement, so he calls groups generated this way {\em rotation-reflection groups}.) The bulk of the work in this result lies in the ``if" direction. The proof is a delicate induction on the group order, based on a complete classification of rotation-reflection groups. This classification was proven in joint work with Marina Mikha\^{i}lova (\cite{langemikhailova}). %For a self-contained treatment, see Lange's thesis (\cite{lange}).

%{\color{red} Same remark here I would just cite this result when you use it later}{\color{blue} Is this better?}

\begin{proof}[Proof of proposition \ref{prop:ifdirection}] 
Let $G$ act on $\RR^n$ by permutations of the axes. Let $x_1,\dots,x_n$ be the coordinates on $\RR^n$. The subspace
\[
T = \left\{\sum_{i=1}^n x_i = 0 \right\}
\]
%{\color{red} How this is related to the action of $k[\mathbf{x}]^n$? Are the $x_i$ the indeterminates of $k[\mathbf{x}]$?} {\color{blue} I don't think they should be seen this way but just as coordinates on $\RR^n$ -- do you have an idea for how I can make this clearer?}
is $G$-invariant. Transpositions in $G$ act as reflections on $T$, while double transpositions and 3-cycles act as rotations. Thus under the hypothesis of the proposition, $G$ acts on $T$ as a rotation-reflection group. By Lange's work (\cite{lange2}), $T/G$ is a PL manifold.

Recall the $\Delta$ of section \ref{sec:CCAandPL}: it is the order complex of $B_n\setminus\{\emptyset\}$, which is the first barycentric subdivision of an $(n-1)$-simplex. Embed the underlying topological space $|\Delta|$ of $\Delta$ in $T$ as follows. First, map the vertices of $\Delta$ to the barycenters of the standard simplex
\[
\left\{x_i \geq 0,\; \sum x_i = 1\right\}
\]
in $\RR^n$ by mapping each vertex, which by definition is an element $\alpha \in B_n\setminus\{\emptyset\}$, which is itself a nonempty subset of $[n]$, to the barycenter 
\[
\frac{1}{|\alpha|}\sum_{i\in \alpha} e_i
\]
of the set of standard basis vectors $\{e_i\}_{i\in \alpha}$ corresponding to that subset. Then, extend this map to all of $\Delta$ by extending linearly from the vertices to each simplex in $\Delta$. Finally, project the affine hyperplane plane $\left\{\sum x_i = 1\right\}$ containing the image orthogonally onto $T$ via $(x_1,\dots,x_n)\mapsto (x_1-1/n,\dots,x_n-1/n)$. This embedding is $G$-equivariant for the action of $G$ on $|\Delta|$ induced from its action on $[n]$, and the present action of $G$ on $T$.

The embedded complex $|\Delta|\subset T$ is evidently a polyhedron, and it is a star of the origin in $T$ since it is the union of closed line segments from the origin to its compact boundary, these segments are disjoint except for the origin itself, and it is a neighborhood of the origin in $T$ (see the definition of a star in \S \ref{sec:CCAandPL}). Since the action of $G$ is linear, it permutes these segments. Thus $|\Delta| / G = |\Delta / G|$ is also a union of line segments from the (image of the) origin to its compact boundary, and these segments are disjoint except for the origin itself. Also, $|\Delta / G|$ is a neighborhood of the (image of the) origin since $T \rightarrow T / G$ is the quotient map by a group of homeomorphisms and is therefore an open map. It is additionally a polyedron since the quotient map $T\rightarrow T/G$ is PL, and the image of a compact polyhedron under a PL map is a compact polyhedron (\cite[Corollary 2.5]{rourkesanderson}). In other words, $|\Delta / G|$ is a polyhedral star of the image of the origin in the PL $(n-1)$-manifold $T/G$. It is therefore (per \cite[pp.~20-21]{rourkesanderson}, see the discussion at the end of \S \ref{sec:CCAandPL}) homeomorphic to a ball. In particular, it is contractible, thus
\[
\tilde H_i(|\Delta / G|;k) = 0
\]
for all $i$, regardless of the field $k$; and it is a manifold (with boundary), thus
\[
H_i(|\Delta / G|, |\Delta /G| - q; k) = 0
\]
for all $i < n-1$ and all $q\in |\Delta/G|$, regardless of $k$. Thus it satisfies \eqref{eq:homvanishing} for all $i<\dim \Delta / G$ and all $q\in |\Delta/G|$, so by the discussion in \S \ref{sec:CCAandPL}, $k[\mathbf{x}]^G$ is Cohen-Macaulay.
\end{proof}

\subsection{The {\em only-if} direction}\label{sec:onlyif}

In this section we complete the proof of theorem \ref{mainresult} by proving the converse of \ref{prop:ifdirection}.

\begin{proposition}\label{prop:onlyifdirection}
If $G$ is not generated by transpositions, double transpositions, and 3-cycles, then there exists a prime $p$ such that for any $k$ of characteristic $p$, $k[\mathbf{x}]^G$ is not Cohen-Macaulay.
\end{proposition}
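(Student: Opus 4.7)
The plan is to apply proposition \ref{prop:pobstruction} with $A = k[\mathbf{x}]$, the given permutation group $G$, and $N$ the normal subgroup generated by transpositions, double transpositions, and 3-cycles. Several hypotheses are immediate: $k[\mathbf{x}]^G$ is noetherian, $k[\mathbf{x}]^N$ is a finite extension of $k[\mathbf{x}]^G$ (as $[G:N]<\infty$), and, by the \emph{if} direction just proved in proposition \ref{prop:ifdirection} applied to $N$ in place of $G$, the ring $k[\mathbf{x}]^N$ is Cohen-Macaulay over every field $k$. For this action the $2$-reflections in $G$ are exactly the identity, transpositions, double transpositions, and $3$-cycles, so $N$ coincides with the subgroup generated by the $2$-reflections. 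Hence it suffices to exhibit a prime $p$ and a prime ideal $\mathfrak{P} \triangleleft k[\mathbf{x}]^N$ whose inertia group $I_{G/N}(\mathfrak{P})$ is a nontrivial $p$-group, after which proposition \ref{prop:pobstruction} forces $k[\mathbf{x}]^G$ to fail Cohen-Macaulayness over any $k$ of characteristic $p$.

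I would build $\mathfrak{P}$ in two stages. First, a purely combinatorial stage: since $N \subsetneq G$, the quotient $G/N$ is nontrivial, so pick a prime $p$ dividing $|G/N|$ and an element $\overline{g}\in G/N$ of order $p$. Given a suitable lift $g\in G$, assemble from it a set partition $\pi$ of $[n]$ together with a distinguished block $B$ of $\pi$, in such a way that the subgroup $G_\pi^B \leq G$ of permutations that preserve $\pi$ and fix $B$ setwise has image in $G/N$ equal to $\langle\overline{g}\rangle\cong\ZZ/p$. The natural candidate for $\pi$ is built from the cycle structure of $g$ (or, more subtly, from the orbits of a carefully chosen $g$-invariant refinement), with $B$ chosen to encode enough data that $G_\pi^B$ captures $\overline g$ yet small enough that any extra permutations in $G_\pi^B$ are forced into $N$.

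Second, realize this group-theoretic data geometrically. Let $V_\pi \subset \AAA^n_k$ be the linear subspace defined by $x_i = x_j$ for $i,j$ in the same block of $\pi$, let $\mathfrak{P}' \triangleleft k[\mathbf{x}]$ be its vanishing ideal (a prime), and set $\mathfrak{P} = \mathfrak{P}'\cap k[\mathbf{x}]^N$. A direct identification shows that the decomposition group $D_{G/N}(\mathfrak{P})$ corresponds to the image in $G/N$ of the $G$-stabilizer of $\pi$, while the inertia group $I_{G/N}(\mathfrak{P})$ corresponds to the image of the block-preserving stabilizer $G_\pi^B$; the role of $B$ is precisely to select one prime in a $G/N$-orbit over $\mathfrak{P}\cap k[\mathbf{x}]^G$ so that its stabilizer is truly inertial, not merely decompositional. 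Combining the two stages gives $I_{G/N}(\mathfrak{P}) \cong \ZZ/p$, and proposition \ref{prop:pobstruction} finishes the argument.

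The main obstacle is the combinatorial first stage. Because $G/N$ need not itself be a $p$-group, one must carve out a cyclic subgroup of order exactly $p$ whose generator is realized by a lift $g$ admitting a partition $\pi$ and block $B$ with the right stabilizer behavior modulo $N$. The delicate point is ensuring that no extraneous elements of $G_\pi^B$ survive in $G/N$ beyond $\langle\overline g\rangle$: the flexibility in refining $\pi$ and choosing $B$ must be used to force any competing elements into the subgroup $N$ generated by $2$-reflections. All subsequent steps (identifying the inertia group, pulling everything through strict henselization, and applying proposition \ref{prop:pobstruction}) are essentially formal once this combinatorial input is secured.
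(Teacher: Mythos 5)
Your overall strategy matches the paper's: reduce to proposition~\ref{prop:pobstruction} with $A = k[\mathbf{x}]$ and $N$ the subgroup generated by $2$-reflections, then exhibit a prime ideal of $k[\mathbf{x}]^N$ whose $G/N$-inertia group is a nontrivial cyclic $p$-group. All the preliminary verifications (noetherianity, finiteness, Cohen-Macaulayness of $k[\mathbf{x}]^N$ via proposition~\ref{prop:ifdirection}) are correct. But there are two genuine gaps in the part you identify as ``the main obstacle,'' and these are not merely technical; they are where the theorem actually lives.

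First, the order of operations in your combinatorial stage is wrong in a way that cannot be fixed by more care. You propose to first pick an arbitrary prime $p \mid [G:N]$, then construct $\pi$ (and $B$) to match. This cannot work for all such $p$: not every prime dividing $[G:N]$ is a bad prime. The paper's own conclusion (\S\ref{sec:concl}) gives the Frobenius group $G \subset S_7$ of order $21$ with $N$ trivial, where $7 \mid [G:N]$ but $7$ is not bad. If you start with $p = 7$ and a $7$-cycle $g$, the orbit partition is the one-block partition $\pi = \{[7]\}$; the blockwise stabilizer $G_\pi^B$ is all of $G$, so $G_\pi^B N/N = G/N = G$ has order $21$ and is not a $p$-group, and proposition~\ref{prop:pobstruction} gives nothing. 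The paper instead \emph{first} chooses $\pi$ minimal in $\Pi_n$ among orbit partitions of elements of $G \setminus N$, and \emph{then} derives $p$ as the order of $G_\pi^B N/N$, which lemma~\ref{lem:primestabilizer} proves is cyclic of prime order. The minimality of $\pi$ is the entire mechanism; it forces every nontrivial block-preserving $h \in G_\pi^B$ to agree with a power of $g$ modulo $N$ (by producing a fixed point for $h^{-1}g^m$, forcing its orbit partition to properly refine $\pi$), and forces the order of $g$'s image in $G/N$ to be prime. Your sketch points vaguely at ``carefully chosen $g$-invariant refinement,'' but the precise condition needed -- minimality of $\pi$ in the refinement order over all elements of $G\setminus N$ -- is exactly the content that is missing.

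Second, your reading of the notation $G_\pi^B$ introduces a distinguished block $B$ and defines $G_\pi^B$ as the permutations preserving $\pi$ and fixing $B$ setwise, then claims the inertia group is the image of this group. That is not what the inertia group of $\mfP_\pi^\star = (x_i - x_j : i,j \text{ in the same block of } \pi)$ is. The superscript $B$ in the paper stands for ``blockwise'': $G_\pi^B$ is the set of $g \in G$ fixing \emph{each} block of $\pi$ setwise, and lemma~\ref{lem:inertiaupstairsis} shows $I_G(\mfP_\pi^\star) = G_\pi^B$ precisely because those are the elements acting trivially on $k[\mathbf{x}]/\mfP_\pi^\star$. There is no auxiliary choice of a single block to make, and your explanation that ``the role of $B$ is to select one prime in a $G/N$-orbit'' does not correspond to anything in the actual computation of inertia; the prime $\mfP_\pi^\star$ is determined by $\pi$ alone, and lemma~\ref{lem:basechangepreservesinertia} guarantees inertia is stable under the base change to $k[\mathbf{x}]^N$, giving $I_{G/N}(\mfP_\pi) = G_\pi^B N / N$ via lemma~\ref{lem:GNinertia} with no further choices. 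If you followed your stated definition of $G_\pi^B$, you would generically get a group strictly larger than the inertia group (it allows permuting the blocks other than $B$), and the argument would break.
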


The proof is at the end of the section. Actually we prove somewhat more: for a group $G$ not generated by transpositions, double transpositions, and 3-cycles, we give an explicit construction yielding the prime $p$. The precise statement is given below as proposition \ref{prop:onlyifconstructive}.

In this section, $p$ is conceptually prior to the field $k$. Our proof will first construct $p$ and then prove that when $\Char k = p$, $k[\mathbf{x}]^G$ is not Cohen-Macaulay.

We develop the needed machinery for the proof. Let $\Pi_n$ be the poset of partitions of the set $[n]$, with the order relation given, for any $\pi , \tau \in \Pi_n$, by 
\[
\pi\leq \tau \Leftrightarrow \pi \text{ refines } \tau.
\]
%The group $G$ acts in a natural way on $\Pi_n$ via its action on $[n]$. Thus for any subgroup $H\subset G$ we may form the quotient poset $\Pi_n/H$. {\color{red} Not quite sure where you use this later?} {\color{blue} I think maybe I don't! Will take it out if this is true...} 
An element $g \in G\subset S_n$ partitions $[n]$ into orbits, and thus determines an element $\pi \in \Pi_n$. This gives a map
\begin{align*}
\varphi:G &\rightarrow \Pi_n\\
g &\mapsto \pi.
\end{align*}

If $\pi \in \Pi_n$, we write $G_\pi^B$ for the blockwise stabilizer of $\pi$ in $G$, i.e. the set of elements of $G$ that act separately on each block of $\pi$.

For a given $\pi\in\Pi_n$, let $\mfP_\pi^\star$ be the prime ideal of $k[\mathbf{x}]$ generated by the binomials $x_i-x_j$ for every pair $i,j\in [n]$ lying in the same block of $\pi$. The dimension of $\mfP_\pi^\star$ (i.e. the dimension of $k[\mathbf{x}]/\mfP_\pi^\star$) is the number of blocks of $\pi$.

\begin{lemma}\label{lem:inertiaupstairsis}
With this notation, we have
\[
I_G(\mfP_\pi^\star) = G_\pi^B.
\]
\end{lemma}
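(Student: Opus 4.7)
The plan is to prove the two containments $G_\pi^B \subseteq I_G(\mfP_\pi^\star)$ and $I_G(\mfP_\pi^\star) \subseteq G_\pi^B$ separately, using the explicit nature of $\mfP_\pi^\star$. The essential observation is that the quotient $k[\mathbf{x}]/\mfP_\pi^\star$ is naturally isomorphic to the polynomial ring $k[\{y_B\}_{B\in \pi}]$, one variable per block of $\pi$, via the map sending each $x_i$ to $y_{B(i)}$, where $B(i)$ denotes the unique block of $\pi$ containing $i$. This makes $\mfP_\pi^\star$ prime (justifying the wording of the lemma) and, more importantly, gives a concrete test: a difference $x_j - x_i$ lies in $\mfP_\pi^\star$ if and only if $i$ and $j$ lie in the same block of $\pi$.

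For the forward containment, I would take $g \in G_\pi^B$. By definition of blockwise stabilizer, $g(i)$ lies in the same block as $i$ for every $i \in [n]$, so $g(x_i) - x_i = x_{g(i)} - x_i$ is one of the generators of $\mfP_\pi^\star$ (or zero). To conclude $(g-1)k[\mathbf{x}] \subseteq \mfP_\pi^\star$ it suffices to observe that for any $f \in k[\mathbf{x}]$,
\[
g(f) = f(x_{g(1)},\dots,x_{g(n)}) \equiv f(x_1,\dots,x_n) = f \pmod{\mfP_\pi^\star},
\]
since congruence modulo an ideal is preserved by polynomial substitution.

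For the reverse containment, I would take $g \in I_G(\mfP_\pi^\star)$. Applying the definition to the particular elements $x_i$ yields $x_{g(i)} - x_i \in \mfP_\pi^\star$ for every $i \in [n]$. By the concrete criterion from the first paragraph, this forces $g(i)$ and $i$ to be in the same block of $\pi$, for every $i$; that is precisely the statement that $g \in G_\pi^B$.

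There is essentially no obstacle here: once one identifies $k[\mathbf{x}]/\mfP_\pi^\star$ with a polynomial ring indexed by the blocks, both directions are one-step computations. The only mild subtlety is noting that the forward implication needs the substitution principle (to upgrade from generators to arbitrary $f$), while the reverse implication only requires evaluation on the generators $x_i$.
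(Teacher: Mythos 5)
Your proof is correct and follows essentially the same route as the paper: identify $k[\mathbf{x}]/\mfP_\pi^\star$ with a polynomial ring on the blocks of $\pi$, and test the inertia condition on the generators $x_i$. Your reverse direction is in fact slightly cleaner than the paper's, which cases on whether $g$ fixes $\pi$ at all (i.e., whether $g\in D_G(\mfP_\pi^\star)$); applying $(g-1)k[\mathbf{x}]\subseteq\mfP_\pi^\star$ directly to each $x_i$ and reading off the block condition avoids that distinction entirely.
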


\begin{proof}The ring $k[\mathbf{x}]/\mfP_\pi^\star$ is the polynomial ring obtained by identifying $x_i$ with $x_j$ for each $i,j$ in the same block of $\pi$, so its indeterminates are in bijection with the blocks of $\pi$. If $h\in G_\pi^B$, then $h$ acts separately on the $x_i$'s in each block, and therefore $h$ fixes $\mfP_\pi^\star$ setwise and the induced action on $k[\mathbf{x}]/\mfP_\pi^\star$ is trivial. Thus $h\in I_G(\mfP_\pi^\star)$. Conversely, if $h\notin G_\pi^B$, then either $h$ fixes $\pi$ but not blockwise, in which case $h$ fixes $\mfP_\pi^\star$ setwise but the action of $h$ on $k[\mathbf{x}]/\mfP_\pi^\star$ is not trivial, so that $h\in D_G(\mfP_\pi^\star)$ but not $I_G(\mfP_\pi^\star)$; or else $h$ does not fix $\pi$ at all, in which case it does not act on $\mfP_\pi^\star$, and is not contained in $D_G(\mfP_\pi^\star)$, let alone $I_G(\mfP_\pi^\star)$.
\end{proof}

If $N$ is a normal subgroup of $G$, denote by $G_\pi^BN/N$ the image of $G_\pi^B$ in the quotient $G/N$, and let
\[
\mfP_\pi = \mfP_\pi^\star \cap k[\mathbf{x}]^N.
\]

\begin{lemma}\label{lem:inertiais}
With this notation, we have
\[
I_{G/N}(\mfP_\pi) = G_\pi^BN/N.\qedhere
\]
\end{lemma}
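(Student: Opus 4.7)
The plan is to reduce this to the two preceding lemmas, using \ref{lem:GNinertia} to pass from $G/N$ inertia to a quotient of $G$ inertias, and then \ref{lem:inertiaupstairsis} to identify the latter with blockwise stabilizers.

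First I would observe that by definition $\mfP_\pi = \mfP_\pi^\star \cap k[\mathbf{x}]^N$, so Lemma \ref{lem:GNinertia} (applied with $A = k[\mathbf{x}]$ and $\mfP = \mfP_\pi^\star$) gives
\[
I_{G/N}(\mfP_\pi) \cong I_G(\mfP_\pi^\star)/I_N(\mfP_\pi^\star).
\]
Next, Lemma \ref{lem:inertiaupstairsis} applied to both $G$ and to the subgroup $N$ (since the same proof works verbatim for any subgroup of $S_n$) identifies
\[
I_G(\mfP_\pi^\star) = G_\pi^B \quad\text{and}\quad I_N(\mfP_\pi^\star) = N_\pi^B = G_\pi^B \cap N.
\]
Substituting, we obtain
\[
I_{G/N}(\mfP_\pi) \cong G_\pi^B / (G_\pi^B \cap N),
\]
and by the second isomorphism theorem the right-hand side is canonically isomorphic to $G_\pi^B N/N$, which is the claimed identification.

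I don't anticipate any real obstacle here: the statement is essentially the compatibility of the inertia construction with passage to a normal subquotient, once the inertia groups have been named. The only thing to be mildly careful about is that the isomorphism of Lemma \ref{lem:GNinertia} is induced by the obvious map $G \to G/N$, so that the identification $I_{G/N}(\mfP_\pi) = G_\pi^B N/N$ is genuinely given by the image in $G/N$ of the blockwise stabilizer in $G$ (not just an abstract isomorphism), which is what one needs for the combinatorial argument in Lemma \ref{lem:primestabilizer} to follow.
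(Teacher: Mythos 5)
Your proof is correct and follows essentially the same route as the paper: apply Lemma \ref{lem:GNinertia} to express $I_{G/N}(\mfP_\pi)$ as $I_G(\mfP_\pi^\star)/I_N(\mfP_\pi^\star)$, then identify $I_G(\mfP_\pi^\star) = G_\pi^B$ via Lemma \ref{lem:inertiaupstairsis} and finish by the second isomorphism theorem. The only cosmetic difference is that the paper obtains $I_N(\mfP_\pi^\star) = G_\pi^B \cap N$ directly from the definitional identity $I_N(\mfP) = I_G(\mfP)\cap N$ noted just before Lemma \ref{lem:GNinertia}, rather than by reapplying Lemma \ref{lem:inertiaupstairsis} to $N$; both are valid.
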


\begin{proof}
We have from lemma \ref{lem:GNinertia} that
\[
I_{G/N}(\mfP_\pi) = I_G(\mfP_\pi^\star)/I_N(\mfP_\pi^\star) = I_G(\mfP_\pi^\star) / \left(N\cap I_G(\mfP_\pi^\star)\right),
\]
and from lemma \ref{lem:inertiaupstairsis} that
\[
I_G(\mfP_\pi^\star) / (N\cap I_G(\mfP_\pi^\star))=G_\pi^B/ (N\cap G_\pi^B) = G_\pi^BN/N.\qedhere
\]
\end{proof}

The following lemma is the device we use to find the characteristic $p$ in which we can prove that $k[\mathbf{x}]^G$ fails to be Cohen-Macaulay.

\begin{lemma}\label{lem:primestabilizer}
Let $N\triangleleft G$ be a proper normal subgroup. Let $\pi$ be minimal in $\Pi_n$ among partitions associated (via $\varphi$) with elements of $G$ that are not in $N$. Then:
\begin{enumerate}
\item The group $G_\pi^BN/N$ is cyclic of prime order, say $p$; \label{concl:cyclic}
\item any element $g$ of $G\setminus N$ whose orbits are given by $\pi$ has order a power of $p$, and \label{concl:ppowerorder}
\item the image of $g$ in $G/N$ generates $G_\pi^BN/N$. \label{concl:genbyg}
\end{enumerate}
\end{lemma}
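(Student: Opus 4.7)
My plan is to exploit the minimality of $\pi$ systematically: any element of $G_\pi^B \setminus N$ must have orbit partition exactly equal to $\pi$, since a strictly finer partition would contradict minimality, and any coarser partition is impossible because elements of $G_\pi^B$ stabilize each block of $\pi$ setwise. From this starting point, I can analyze the cycle structure that elements of $G_\pi^B\setminus N$ induce on each block of $\pi$ and extract the required cyclic structure.

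First I show that if $g \in G_\pi^B \setminus N$, then $\varphi(g)=\pi$, which means $g$ acts as a full $n_i$-cycle on each block $B_i$ of $\pi$, where $n_i=|B_i|$. Consequently $|g|=\operatorname{lcm}(n_i)$, and for any prime $q$ dividing $|g|$, the element $g^q$ has strictly smaller orbits on at least one block, so $\varphi(g^q)<\pi$. By minimality this forces $g^q\in N$, i.e.\ $\overline{g}^{\,q}=1$ in $G/N$ for every prime $q\mid|g|$. If $|g|$ had two distinct prime factors $q_1,q_2$, then the order of $\overline{g}$ would divide $\gcd(q_1,q_2)=1$, contradicting $g\notin N$. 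Hence $|g|=p^a$ for a single prime $p$, and $\overline{g}$ has order exactly $p$.

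Next I fix such a $g$ and show that every $h\in G_\pi^B\setminus N$ satisfies $\overline{h}\in\langle\overline{g}\rangle$. Note that $\pi$ has at least one non-singleton block $B$ (since $\pi=\varphi(g_0)$ for some nonidentity $g_0\in G\setminus N$). Pick any $i\in B$ and write $n=|B|$. Since $g|_B$ is an $n$-cycle, there is a unique $k\in\{0,\dots,n-1\}$ with $g^k(i)=h(i)$. Then $g^{-k}h\in G_\pi^B$ fixes $i$; but a full $n$-cycle on $B$ has no fixed point, so $g^{-k}h$ does not act as a full cycle on $B$. Thus $\varphi(g^{-k}h)<\pi$ strictly, and by minimality $g^{-k}h\in N$, giving $\overline{h}=\overline{g}^{\,k}\in\langle\overline{g}\rangle$. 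Combined with the trivial observation that elements of $G_\pi^B\cap N$ map to the identity, this yields $G_\pi^BN/N=\langle\overline{g}\rangle$, a cyclic group of order $p$, proving (1); the prime $p$ is also seen to be independent of the choice of $g$.

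Parts (2) and (3) are then immediate: any $g\in G\setminus N$ with $\varphi(g)=\pi$ automatically lies in $G_\pi^B$ (its orbits, being the blocks of $\pi$, are preserved setwise), so the order analysis of the second paragraph applies to yield $|g|=p^a$; and since $G_\pi^BN/N$ has order $p$, any nontrivial element, including $\overline{g}$, generates it. The main obstacle is the step producing cyclicity rather than merely an elementary-abelian-like structure: a priori one only knows $G_\pi^BN/N$ is a group in which every nonidentity element has order $p$, which for $p$ odd does not force cyclicity. The point-fixing trick on a non-singleton block is what upgrades this to a single cyclic group by forcing $g^{-k}h\in N$ for a specific power $k$.
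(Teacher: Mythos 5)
Your proof is correct and takes essentially the same approach as the paper's: the crux in both is the fixed-point-plus-minimality trick (finding a power $g^k$ so that $g^{-k}h$ fixes a point and hence, by minimality of $\pi$, lands in $N$), together with the observation that $g^q$ properly refines $\pi$ for any prime $q\mid |g|$. You merely reorder the steps, establishing that $\overline g$ has prime order before proving cyclicity, whereas the paper proves cyclicity (conclusion 3) first and then extracts the prime order.
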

\begin{proof}
Let $g$ be an element of $G\setminus N$ whose orbits are given by $\pi$, and let $h$ be any other nontrivial element of $G_\pi^B$, in other words a nontrivial element of $G$ whose orbits refine $\pi$. (Note that, by minimality of $\pi$, either $\varphi(h) = \pi$ or else $h\in N$.) Pick any element $a\in [n]$ acted on nontrivially by $h$. Then $g$ acts nontrivially on $a$ as well since $h$'s orbits refine $g$'s.

Since $h$ preserves $\pi$ and $g$ acts transitively on each block of $\pi$, there is an $m\in\ZZ$ such that $g^m(a) = h(a)$. Then $h^{-1}g^m(a) = a$, so that $h^{-1}g^m$ both preserves $\pi$ and has a fixed point $a$ that $g$ does not have. Thus its orbits properly refine $\pi$, and minimality of $\pi$ among partitions associated to elements of $G\setminus N$ implies that $h^{-1}g^m\in N$. Thus $hN = g^mN$. This shows that $g$ generates the image of $G_\pi^B$ in $G/N$, proving \ref{concl:genbyg}; thus $G_\pi^BN/N$ is cyclic. Meanwhile, for any prime $p$ dividing the order of $g$, $g^p$'s orbits also properly refine $g$'s, so $g^p$ is in $N$ too; thus $g$'s image in $G/N$ has order dividing $p$. Since $g\notin N$ by construction, the order of $g$'s image in $G/N$ is exactly $p$. This completes the proof of \ref{concl:cyclic}. If $q$ is a hypothetical second prime dividing the order of $g$ in $G$, then the order of $g$'s image in $G/N$ is $q$, for the same reason it is $p$, and it follows that $q=p$ after all, so there is no such second prime. Therefore $g$ has $p$-power order in $G$. This proves \ref{concl:ppowerorder}.
\end{proof}

\begin{proof}[Proof of proposition \ref{prop:onlyifdirection}]
Let $N$ be the subgroup of $G$ generated by the transpositions, double transpositions, and 3-cycles (i.e. $2$-reflections). By proposition \ref{prop:ifdirection}, $k[\mathbf{x}]^N$ is a Cohen-Macaulay ring. Since $k[\mathbf{x}]$ is a finitely generated algebra over $k$, $k[\mathbf{x}]^G$ is also finitely generated as an algebra over $k$ (\cite[Chapitre V \S 1.9, Th\'{e}or\`{e}me 2]{bourbaki}), so in particular it is noetherian. By the same logic, $k[\mathbf{x}]^N$ is finitely generated as an algebra over $k$, and therefore over $k[\mathbf{x}]^G$. Since it is a subring of $k[\mathbf{x}]$, which is integral over $k[\mathbf{x}]^G$ by \cite[Chapitre V \S 1.9, Proposition 22]{bourbaki}, it is integral over $k[\mathbf{x}]^G$ as well, which, together with finite generation as an algebra, implies it is actually finite over the noetherian ring $k[\mathbf{x}]^G$. Thus if $k$ is a field of positive characteristic $p$, then proposition \ref{prop:pobstruction} applies, and we can show $k[\mathbf{x}]^G$ is not Cohen-Macaulay by exhibiting an inertia group for the action of $G/N$ on $k[\mathbf{x}]^N$ that is a nontrivial $p$-group.

Now if $N$ is a proper subgroup of $G$ per the hypothesis, then we can find a $\pi \in \Pi_n$ that is minimal among all partitions associated (via $\varphi$) with elements of $G\setminus N$. Then lemma \ref{lem:primestabilizer} gives us a prime number $p$ such that $G_\pi^BN/N$ is cyclic of order $p$, and then lemma \ref{lem:inertiais} gives us a prime ideal $\mfP_\pi$ of $k[\mathbf{x}]^N$ such that 
\[
I_{G/N}(\mfP_\pi) = G_\pi^BN/N.
\]
Thus, for any $k$ of this specific characteristic, we can conclude by proposition \ref{prop:pobstruction} that $k[\mathbf{x}]^G$ fails to be Cohen-Macaulay.
\end{proof}

An examination of the proof in view of conclusion \ref{concl:ppowerorder} of lemma \ref{lem:primestabilizer} shows that we have actually proven the following constructive version of proposition \ref{prop:onlyifdirection} with no additional work:

\begin{customthm}{\ref*{prop:onlyifdirection}b}\label{prop:onlyifconstructive}
Let $N$ be the subgroup of $G$ generated by the transpositions, double transpositions, and 3-cycles. If $N\subsetneq G$, then for any $g \in G\setminus N$ whose orbits are not refined by the orbits of any other $g\in G\setminus N$, the order of $g$ is a prime power $p^\ell$, where $p$ has the property that $k[\mathbf{x}]^G$ is not Cohen-Macaulay if $\Char k = p$.\qed
\end{customthm}

\section{Conclusion and further questions}\label{sec:concl}

In this section we note some implications of the results above, and pose questions for further exploration. Throughout, let $N$ be the subgroup of $G\subset S_n$ generated by the transpositions, double transpositions, and 3-cycles, as at the end of \S \ref{sec:onlyif}.

\subsection{Bad primes; relation to previous work}

Given a permutation group $G\subset S_n$, let us refer to the set of prime numbers $p$ for which, if $\Char k = p$, then $k[\mathbf{x}]^G$ fails to be Cohen-Macaulay, as $G$'s {\em bad primes}.

It was mentioned in the introduction that the ``if" direction of theorem \ref{mainresult} implies that $G$'s bad primes are a subset of the primes dividing $[G:N]$. We see this as follows: the ``if" direction implies that $k[\mathbf{x}]^N$ is Cohen-Macaulay. Then, since
\[
k[\mathbf{x}]^G = (k[\mathbf{x}]^N)^{G/N},
\]
it follows from the Hochster-Eagon theorem (\cite[Proposition 13]{hochstereagon}) that $k[\mathbf{x}]^G$ is Cohen-Macaulay in any characteristic not dividing the order of $G/N$. Meanwhile, the ``only-if" direction of theorem \ref{mainresult} implies that if the set of primes dividing $[G:N]$ is nonempty, then so is $G$'s set of bad primes.

It was also mentioned in the introduction that the present work unites and generalizes several previously known results: Reiner's (\cite{reiner92}) theorem that the invariant rings of $A_n$ and the diagonally embedded $S_n\hookrightarrow S_n\times S_n$ are Cohen-Macaulay over all fields; Hersh's (\cite{hersh}, \cite{hersh2}) similar theorem for the wreath product $S_2\wr S_n\subset S_{2n}$, and Kemper's (\cite{kemper99}) theorems that in the $p$-group case, the ``only-if" direction of theorem \ref{mainresult} holds, and that the invariant ring of a {\em regular} permutation group $G$ is Cohen-Macaulay over all fields if and only if $G=C_2$, $C_3$, or $C_2\times C_2$, and in all other cases, every prime dividing $|G|$ is a bad prime for $G$. Most of these results are immediate implications of the ``if" direction of theorem \ref{mainresult}:
\begin{itemize}
\item The group $A_n$ is generated by 3-cycles.
\item The diagonal $S_n\hookrightarrow S_n\times S_n$ is generated by the double transpositions $(i,i+1)(i+n,i+n+1)$ for $i=1,\dots,n-1$.
\item The wreath product $S_2\wr S_n$ is generated by the transpositions $(2i-1,2i)$ and the double transpositions $(2i-1,2i+1)(2i,2i+2)$ for $i=1,\dots,n-1$.
\item The regular representations of $C_2$, $C_3$, and $C_2\times C_2$ are generated by (in fact, their only nontrivial elements are) transpositions, 3-cycles, and double transpositions, respectively.
\end{itemize}
Recovering the other half of Kemper's result on regular permutation groups (that {\em every} prime dividing $|G|$ is bad for $G$) from the present work requires the constructive version of the ``only-if" direction given in proposition \ref{prop:onlyifconstructive}. Recall that if $G$ acts regularly, i.e. freely and transitively, on $[n]$, then this action is isomorphic to $G$'s left-translation action on its own elements. Then we have $|G| = n$, and every element $g$ of $G$ splits $[n]$ into orbits of equal length the order of $g$, because these orbits are in bijection with the right cosets $\langle g\rangle h$, $h\in G$.

If $G$ acts regularly and $|G| = n\geq 5$, then $G$ does not contain any transpositions, double transpositions, or 3-cycles, so $N$ is trivial. If $p$ is any prime dividing $|G|$, then $G$ has an element $g$ of order $p$, which, by the discussion in the last paragraph, partitions $[n]$ into orbits of equal length $p$. This partition cannot be refined by any nontrivial partition with parts of equal length since $p$ is prime; thus no element of $G\setminus N = G\setminus \{1\}$ can have orbits refining $g$'s. It follows from proposition \ref{prop:onlyifconstructive} that $p$ is a bad prime for $G$.

The remaining case is $n=4$ and $G=C_4$. In this case, $G$ is a $2$-group not generated by its lone double transposition, so it follows from theorem \ref{mainresult} that 2 is a bad prime for $G$.

\subsection{Groups generated by transpositions, double transpositions, and 3-cycles}

Theorem \ref{mainresult} calls attention to the family of permutation groups generated by transpositions, double transpositions, and 3-cycles. One may wonder how extensive is this family of groups. It turns out to be very limited. One can extract a classification from Lange and Mikha\^{i}lova's classification of all rotation-reflection groups (\cite{langemikhailova}), but this is more power than is needed. In the case that $G$ is transitive, such groups were already classified in 1979 by W. Cary Huffman (\cite[Theorem 2.1]{huffman}):
\begin{enumerate}
\item If $G$'s transpositions generate a transitive subgroup, then $G=S_n$.
\item If $G$ contains a transposition but the transpositions do not act transitively, then $n=2m$ is even and $G$ is isomorphic to the wreath product $S_2\wr S_m$.
\item If $G$ does not contain a transposition but does contain a three-cycle, then $G = A_n$.
\item Otherwise, $G$ contains no transpositions or 3-cycles and is generated by double transpositions. Then we have:
\begin{enumerate}
\item If $G$ contains a subgroup acting transitively on 5 points and fixing the rest, then either $n=5$ and $G\cong D_5$ in its usual action on the vertices of a regular pentagon, or else $n=6$ and $G\cong A_5\cong PSL(2,5)$ in its transitive action on $6$ points, e.g. the six points of the projective line over $\FF_5$.
\item If $G$ contains a subgroup acting transitively on 7 points and fixing the rest, then either $n=7$ and $G\cong GL(3,2)$ acting on the nonzero vectors of $\FF_2^3$, or else $n=8$ and $G\cong AGL(3,2) = \FF_2^3 \rtimes GL(3,2)$ acting on the points of $\AAA_{\FF_2}^3$.
\item If $G$ does not contain either of these kinds of subgroups, then $n=2m$ is even, and $G$ is isomorphic to the alternating subgroup of the wreath product $S_2\wr S_m$.
\end{enumerate}
\end{enumerate}
When one considers intransitive groups $G$, one does not end up too far beyond direct products of the above, since transpositions and 3-cycles can only act in a single orbit, while double transpositions can only act in two orbits, as a transposition in each. For example, if $G$ has two orbits, the classification begins as follows. If $G$ is not a direct product of the above, it contains a double transposition that acts as a transposition in each orbit. Then its image in each orbit contains a transposition, so is either $S_n$ or $S_2\wr S_m$ by the above. The possibilities are then highly constrained by Goursat's lemma.

Thus theorem \ref{mainresult} shows that most permutation groups $G$ have at least one bad prime.

\subsection{Further questions}

Since theorem \ref{mainresult} implies that the set of bad primes of $G$ is contained in the set of prime factors of $[G:N]$ and is nonempty exactly when the latter is nonempty, one might hope that these two sets are always equal. This is not the case. For example, let $G\subset S_7$ be the Frobenius group of order 21 generated by
\[
(1234567), (124)(365).
\]
All the nontrivial elements in this group are 7-cycles or double 3-cycles. Thus $N$ is trivial in this case, and the candidate bad primes are 3 and 7. 

Now $\pi = \{1,2,4\}\cup \{3,5,6\} \cup \{7\}$ is a minimal partition as in lemma \ref{lem:primestabilizer}, and thus the corresponding $g=(124)(365)$ generates an inertia group of order 3 for the action of $G/N = G$ on $k[\mathbf{x}]^N = k[\mathbf{x}]$. Then proposition \ref{prop:onlyifconstructive} shows that if $k$ has characteristic $3$, $k[\mathbf{x}]^G$ fails to be Cohen-Macaulay; i.e. 3 is a bad prime for this $G$.

On the other hand, 7 is not a bad prime for this $G$. This can be seen using the criterion given by Kemper in \cite[Theorem 3.3]{kemper01}, since 7 divides $|G|$ just once. Thus, a prime can divide $[G:N]$ without being bad. (By a computer calculation, no example of this phenomenon occurs below degree 7.)

At the other extreme, one might hope that the bad primes of $G$ are {\em only} those which are furnished by proposition \ref{prop:onlyifconstructive}. This is not true either. Take $G= D_7$, the dihedral group of order 14 acting on the vertices of a heptagon, which is also a Frobenius group. Now, all the nontrivial elements are 7-cycles and triple transpositions, so again, $N$ is trivial, and the candidate bad primes are 2 and 7. This time, they both really are bad primes. One can see this using Kemper's criterion \cite[Theorem 3.3]{kemper01}. For 2 it also follows from proposition \ref{prop:onlyifconstructive}, but for 7 it does not, since the 7-cycles have orbits that are properly refined by the triple transpositions.

Thus it remains to be determined, for a given $G$, exactly which primes are bad. Theorem \ref{mainresult} gives us a finite list of candidate bad primes (those dividing $[G:N]$), and, if this list is nonempty, proposition \ref{prop:onlyifconstructive} gives us some specific primes that are definitely bad. Among the remaining candidate bad primes, if any divide $|G|$ only once, \cite[Theorem 3.3]{kemper01} can be used to determine if they are actually bad. What remains to be determined is whether $p$ is a bad prime if $p^2\mid |G|$ and $p$ is not associated to a $g\in G\setminus N$ with minimal orbits as in proposition \ref{prop:onlyifconstructive}.

\begin{question}\label{q:whichpexactly}
How can Cohen-Macaulayness of $k[\mathbf{x}]^G$ be assessed when \cite[Theorem 3.3]{kemper01} and the present work are both inapplicable, i.e. when $p\mid [G:N]$ and $p^2 \mid |G|$, but $p$ does not come from a minimal $g\in G\setminus N$ as in proposition \ref{prop:onlyifconstructive}?
\end{question}

Another line of inquiry that flows from the present work has to do with the relationship between the arguments in the ``if" and ``only-if" directions. The proof of the ``if" direction is a mildly revised version of an argument given by the first author in his doctoral thesis \cite{blumsmith}. In that same work, he also proved the ``only-if" direction for $k[\Delta]^G$ (see \S \ref{sec:CCAandPL} for notation), but not for $k[\mathbf{x}]^G$. There, the ``only-if" argument was framed in the same topological language as the ``if" argument, which is why it applied to $k[\Delta]^G$ (taking advantage of Stanley-Reisner theory) but not $k[\mathbf{x}]^G$. The second author suggested to transfer the ``only-if" argument from topological into commutative-algebraic language, and much of the present paper sprang from this suggestion. 

This transfer was accomplished piecemeal, with an individual search for each com\-mutative-algebraic fact needed to replace each topological fact. For example, Raynaud's theorem (lemma \ref{ringslice}) replaced an elementary principle about the relationship between point stabilizers and the local structure in a topological quotient. The well-behavedness of inertia groups with respect to normal subgroups (lemma \ref{lem:GNinertia}) replaced an elementary fact about group actions on a set. The observation that inertia $p$-groups obstruct Cohen-Macaulayness if they are not generated by $2$-reflections (lemma \ref{lem:GKLP}), based on Lorenz and Pathak's \cite[Corollary 4.3]{lorenzpathak}, replaced an argument about the homology of links in the quotient of a simplicial complex. 

Nonetheless, the authors had the conviction throughout that an overarching principle was at play. It may be fruitful to seek a more comprehensive understanding of the relationship between the topology and the algebra. Stanley-Reisner theory gives a partial answer to this question, but it does not appear to account for the ``only-if" direction of theorem \ref{mainresult}, so a fuller picture is desirable.

Here are two more focused questions that approach this inquiry from various directions:

\begin{question}
Is there a purely algebraic proof of theorem \ref{mainresult}, making no use of Stanley-Reisner theory or Lange's result on PL manifolds?
\end{question}

\begin{question}\label{q:bidirectional}
For a fixed $p=\Char k$ as in question \ref{q:whichpexactly}, can $k[\mathbf{x}]^G$ be Cohen-Macaulay without $k[\Delta / G]$ being Cohen-Macaulay?
\end{question}

%\begin{remark}
%The answer to the converse of question \ref{q:bidirectional} is ``no" due to the results of Garsia-Stanton and Reiner described in \S \ref{sec:CCAandPL} (\cite[Theorem A.1]{hersh2}, building on \cite{garsia}, \cite{garsiastanton}). But nothing we have proven precludes the possibility of some specific $p$ dividing $[G:N]$ that does not come from a maximal $\pi$, for which $\FF_p[\mathbf{x}]^G$ is Cohen-Macaulay but $\FF_p[\Delta]^G$ is not.
%\end{remark}

\section{Acknowledgement}

The authors wish to thank Gregor Kemper, Victor Reiner, and Christian Lange for fruitful discussions and comments, Tom Zaslavsky for alerting them to the conventional partial order on $\Pi_n$, and an anonymous referee for very helpful comments.

The second author's work on this project was partially supported by a grant from the National Research Foundation of South Africa.

Parts of this work originally appeared as part the first author's doctoral thesis under the supervision of Yuri Tschinkel and Fedor Bogomolov at the Courant Institute of Mathematical Sciences at NYU.

\end{document}